\numberwithin{equation}{section} 
\numberwithin{figure}{section} 
\let\c@equation\c@figure
\newcommand{\C}{\mathbb{C}}
\newcommand{\F}{\mathbb{F}}
\newcommand{\N}{\mathbb{N}}
\newcommand{\PP}{\mathbb{P}}
\newcommand{\Q}{\mathbb{Q}}
\newcommand{\R}{\mathbb{R}}
\newcommand{\RR}{\mathbb{R}}
\newcommand{\Z}{\mathbb{Z}}
\newcommand{\cF}{\mathcal{F}}
\newcommand{\cK}{\mathcal{K}}
\newcommand{\cL}{\mathcal{L}}
\newcommand{\cW}{\mathcal{W}}
\DeclareMathOperator{\Des}{Des}
\DeclareMathOperator{\des}{des}
\DeclareMathOperator{\desc}{desc}
\DeclareMathOperator{\Span}{Span}
\DeclareMathOperator{\OI}{OI}
\DeclareMathOperator{\ann}{ann}
\DeclareMathOperator{\Supp}{Supp}
\DeclareMathOperator{\PG}{PG}
\DeclareMathOperator{\mult}{mult}
\DeclareMathOperator{\wt}{wt}
\DeclareMathOperator{\topp}{top}
\DeclareMathOperator{\rk}{rk}
\newtheorem{theorem}[equation]{Theorem}
\newtheorem{corollary}[equation]{Corollary}
\newtheorem{lemma}[equation]{Lemma}
\newtheorem{prop}[equation]{Proposition}
\theoremstyle{definition}
\newtheorem{rem}[equation]{Remark}
\newtheorem{example}[equation]{Example}
\theoremstyle{definition}
\newtheorem{definition}[equation]{Definition}
\newcommand*\ps[1]{{\llbracket#1\rrbracket}}
\newcommand\nc{\newcommand}
\nc\on{\operatorname}
\title{Matroidal Mixed Eulerian Numbers}
\author{Eric Katz and Max Kutler}
\date{}
\begin{document}

\maketitle

\begin{abstract}
We make a systematic study of matroidal mixed Eulerian numbers which are certain intersection numbers in the matroid Chow ring generalizing the mixed Eulerian numbers introduced by Postnikov. These numbers are shown to be valuative and obey a log-concavity relation. We  establish recursion formulas and use them to relate matroidal mixed 
Eulerian numbers to the characteristic and Tutte polynomials, reproving results of Huh--Katz and Berget--Spink--Tseng. Generalizing Postnikov, we show that these numbers are equal to certain weighted counts of binary trees. Lastly, we study these numbers for perfect matroid designs, proving that they generalize the remixed Eulerian numbers of Nadeau--Tewari.
\end{abstract}

\section{Introduction}

Eulerian numbers, which count permutations with a certain number of descents, are a classical part of algebraic combinatorics. Postnikov, in his study of the volumes of permutohedra, introduced mixed Eulerian numbers which are mixed volumes of hypersimplexes. 
Inspired by this work, Berget--Spink--Tseng \cite{BST} defined hypersimplex classes $\gamma_k$ (perhaps motivated by the observation \cite[Remark~3.6]{Huh:tropicalgeometry}) in the matroidal Chow ring $A^*(M)$ and related their intersection numbers to $T_M(1,y)$, a particular specialization of the Tutte polynomial of a rank $r+1$ matroid $M$ on the set $E=\{0,1,\dots,n\}$, satisfying
\[T_M(1,y)=\sum_{S\subseteq E:\rk(S)=r+1} (y-1)^{|S|-r-1}.\]
This is a somewhat surprising result: $T_M(1,y)$ is sensitive to the size of flats; the matroid Chow ring vanishes for matroids with loops and otherwise depends only on the simplification of the matroid. In this paper, we make a systematic study of the intersection numbers of hypersimplex classes, which we dub {\em the matroidal mixed Eulerian numbers}, in hopes of getting a better sense of the information contained in them. These numbers arise as degrees in the matroidal Chow ring  for a rank $r+1$ matroid $M$ on $E$:
\[A_{c_1,\dots,c_n}(M)=\deg_M(\gamma_1^{c_1}\dots\gamma_n^{c_n})\]
for nonnegative integers $c_1,\dots,c_n$ satisfying $c_1+c_2+\dots+c_n=r$.
They specialize to the usual mixed Eulerian numbers in the case where $M$ is the Boolean matroid $U_{n+1,n+1}$.
By expressing the hypersimplex classes in the matroid Chow ring according to Lemma~\ref{l:weights}, we see that the hypersimplex classes are a sum of flats weighted by a rational number depending on their size:
\[\gamma_k=\sum_{S\subset E} \OI(S,T)x_S=\sum_{S\subset E} \mult_{E}(|S|,k) x_S,\]
where $\OI$ is an integer called the {\em over-intersection} and $\mult_{E}$ is a particular rational number.
For that reason, matroidal mixed Eulerian numbers are sensitive to more than just the lattice of flats.

The matroidal mixed Eulerian numbers obey many recursion relations, allowing us to get a handle on some of their values. As their combinatorics are quite involved, we find them most accessible in the contiguous or flatly contiguous case, that is, when the set $\{i\mid c_i\neq 0\}$ involves a range of consecutive integers or a range of consecutive sizes of flats. In these cases, the matroidal mixed Eulerian numbers satisfy an analogue of the classical Eulerian recursion $A(n,k)=(n-k+1)A(n-1,k-1)+kA(n-1,k)$ and a certain deletion/recursion relation. These relations immediately yield expressions for the characteristic and Tutte polynomial, reproducing results of Huh--Katz \cite{HuhKatz} and Berget--Spink--Tseng \cite{BST}:
\begin{align*}
\deg_M(\gamma_1^k\gamma_n^{r-k})&=\mu^k(M)\\
C_{v}(M,y)&=T_M(1,y)C_{v}(U_{r+1,r+1},y)
\end{align*}
where $\mu^k$ is a coefficient of the reduced characteristic polynomial, $C_{v}$ is a polynomial built out of matroidal mixed Eulerian numbers, and $T_M(x,y)$ is the Tutte polynomial of $M$.

Postnikov gave a description of the mixed Eulerian numbers as a sum indexed by certain decorated binary trees. We give the analogous description for matroidal mixed Eulerian numbers, which immediately follows from a monomial expansion in the matroid Chow ring. We consider some cases where these trees are particularly explicit and can be related to counts of flags of flats.

Degrees in the matroid Chow ring can be computed by equivariant localization by virtue of Berget--Eur--Spink--Tseng's equivariant lift of the Bergman class \cite{BEST}. This allows us to observe that that matroidal mixed Eulerian numbers are valuative over matroids and that they have an expression in terms of counts of permutations (Theorem~\ref{t:descentcounts}):
  \[\deg_M(\lambda_1^{d_1}\dots\lambda_{n}^{d_n})=\sum_{w}(-1)^{n-r+\des(w)}
  \]
where $\lambda$'s are certain classes in $A^*(M)$ related to $\gamma_k$'s and the sum is over permutations satisfying a certain descent condition.
This hints at connections between permutation statistics and matroids.

A particular case where the analogies between the matroidal and usual mixed Eulerian numbers are especially clear is that of perfect matroid designs, i.e.,~matroids for which there are integers 
\[1=n_1<n_2<\dots<n_r\]
such that every flat of rank $i$ contains exactly $n_i$ elements. These include uniform matroids, projective geometries, and certain sporadic examples. In this case, the matroidal mixed Eulerian numbers involving only the classes $\gamma_{n_i}$ are of particular interest. These numbers obey a recursion coming from a relation in the matroid Chow ring between $\gamma_{n_i}^2$, $\gamma_{n_i}\gamma_{n_{i+1}}$, and $\gamma_{n_{i-1}}\gamma_{n_i}$. 
We write
\[A_{(c_1,\dots,c_r)_n}(M)=\deg_M(\gamma_{n_1}^{c_1}\dots\gamma_{n_r}^{c_r})\]
when $M$ is a perfect matroid design on $E$.
The constant
\[V_M=\left(\prod_{i=1}^r  N_i 
 \frac{n_{i+1}-n_i}{n_{i+1}}\right)\]
appears when computing these numbers, where $N_i$ is the number of rank $i$ flats in a given $i+1$ flat (which itself is expressible in terms of the $n_i$'s).
We verify that (up to a power of $q$) the remixed Eulerian numbers of Nadeau--Tewari \cite{NT:remixed} are the matroidal mixed Eulerian numbers of the projective geometry $\PG(r,q)$, when $q$ is a prime power.

Postnikov observed that the mixed Eulerian numbers obey the following
properties among many others:

\begin{enumerate}
\item The numbers $A_{c_1,\dots,c_n}$ are positive integers defined
for $c_1,\dots,c_n\geq 0$ such that $c_1+\cdots+c_n=n$.

\item For $1\leq k\leq n$, the number $A_{0^{k-1},n,0^{n-k}}$  
is the usual Eulerian number $A(n,k-1)$, equal to the number of permutations of $\{1,\ldots,n\}$ with exactly $k-1$ descents.
Here and below $0^l$ denotes the sequence of $l$ zeros.

\item We have $\sum 
\frac{1}{c_1!\cdots c_n!}\,A_{c_1,\dots,c_n} = (n+1)^{n-1}$,
where the sum is over $c_1,\dots,c_n\geq 0$ with $c_1+\cdots + c_n = n$.

\item We have $A_{k,0,\dots,0,n-k} = \binom{n}{k}$.

\item We have $A_{1,\dots,1}=n!$.

\item We have $A_{c_1,\dots,c_n} = 1^{c_1} 2^{c_2} \cdots n^{c_n}$
if $c_1+\cdots + c_i \geq i$ for $i=1,\dots,n-1$,
and $c_1+\cdots + c_n = n$.
\end{enumerate}

We establish the following analogues, some of which are very straightforward in our setting:
\begin{enumerate}
\item \label{i:nonneg} The numbers $A_{c_1,\dots,c_n}(M)$ are nonnegative integers, defined
for $c_1,\dots,c_n\geq 0$ such that $c_1+\cdots+c_n=r$.

\item \label{i:eulerianrecurrence} The flatly contiguous matroidal mixed Eulerian numbers obey an analogue of the Eulerian recurrence (Proposition \ref{p:eulerianrelation}).

\item \label{i:pvolume} We have $\sum 
\frac{r!}{c_1!\cdots c_n!}\,A_{c_1,\dots,c_n} = \on{PVol}(M)$,
where the sum is over $c_1,\dots,c_n\geq 0$ with $c_1+\cdots + c_n = r$
and $\on{PVol}(M)$ is the permutohedral volume of $M$ (Lemma \ref{l:pvol}).

\item \label{i:charpoly} We have $A_{k,0,\dots,0,n-k}(M) = \mu^k(M)$ (Proposition \ref{p:charpoly}).

\item \label{i:tutte} We have $A_{1,\dots,1}(M)=r!T_M(1,0)$ (Corollary \ref{c:tutte}).

\item \label{i:perfectlylopsided} For a perfect matroid design $M$, we have $A_{(c_1,\dots,c_r)_n}(M) = V_Mn_1^{c_1} n_2^{c_2} \cdots n_r^{c_r}$
if $c_1+\cdots + c_i \geq i$ for $i=1,\dots,r-1$,
and $c_1+\cdots + c_n = r$ (Lemma \ref{l:lopsided}).
\end{enumerate}

Here, \eqref{i:nonneg}, a consequence of the nefness (i.e.~convexity) of the hypersimplex classes by Theorem~\cite[Theorem~8.9]{AHK}, was noted in \cite{BST}. Item~\eqref{i:charpoly} was proven in \cite{HuhKatz} while \eqref{i:tutte} is a special case of \cite[Corollary~1.6]{BST}. Item~\eqref{i:pvolume} is immediate from definitions. Items~\eqref{i:eulerianrecurrence} and \eqref{i:perfectlylopsided} appear to be new.

Our work is closely related to that of Horiguchi \cite{Horiguchi} who studied the connection between mixed Eulerian numbers in various Coxeter types and the Petersen Schubert Calculus.

We owe particular thanks to the innovative work of and conversations with Andrew Berget, Christopher Eur, Hunter Spink, and Dennis Tseng. This work arose from the authors' ``Faculty 15'' research meeting held at the Traditions at Scott Dining Hall.

In section~\ref{s:matroids}, we review matroid Chow rings and hypersimplex classes, giving a new representative for them. The relations between these classes coming from evaluation and deletion/contraction, described in section~\ref{s:relations}, are employed to give new proofs of expressions for the characteristic and Tutte polynomials in section~\ref{s:tutte}. We describe the tree expansion in section~\ref{s:trees}. Valuativity and formulas in terms of permutations are given by means of localization in section~\ref{s:localization}. Section~\ref{s:perfect} studies perfect matroid designs.

\section{Matroid Chow Rings and Hypersimplex Classes} \label{s:matroids}

\subsection{Matroids and Matroid Chow Rings}

We begin by reviewing some notions of matroids following \cite{AHK}.
Let $E$ denote the set $\{0,1,\dots,n\}$. Let $S_{E}$ denote the group of bijections from $E$ to itself.
Usually, we will take $M$ to be a rank $r+1$ matroid on $E$ given by a rank function $\rk\colon 2^{E}\to\Z_{\geq 0}$. 
We will denote the  {\em Boolean matroid} on $E$ by $U_{n+1,n+1}$; it is characterized by $\rk(I)=|I|$, that is, every subset of $E$ is a flat.
For $I\subseteq E$, let $\overline{I}$ denote the closure of $I$ with respect to $M$.
All matroids will be loopless unless otherwise noted. For a flat $F$, let $M_F$ denote the contraction of $M$ at $F$, i.e.,~the matroid whose underlying lattice of flats is the interval $[F,\hat{1}]$. Let $M^F$ denote the restriction to $F$, which has lattice $[\hat{0},F]$

In $\R^{E}$, let $e_0,\dots,e_n$ denote the standard unit basis vectors. Write $\mathbf{1}=e_0+\dots+e_n$.
Set $N_{\R}=\R^{E}/\R\mathbf{1}$ where we will conflate the $e_i$'s with their images in $N_{\R}$. For a subset $S\subset E$, write
\[e_S=\sum_{i\in S} e_i \in N_{\R}.\]
For a chain of subsets 
\[\mathcal{S}=\{\varnothing\subsetneq S_1\subsetneq \dots\subsetneq S_k\subsetneq E\},\]
write $\sigma_{\mathcal{S}}\subset N_{\R}$ for the cone
\[\sigma_{\mathcal{S}}=\Span_{\geq 0}(e_{S_1},\dots,e_{S_k}).\]
The permutohedral fan $\Delta_{E}$ is the fan in $N_{\R}$ whose cones are $\sigma_{\mathcal{S}}$ as $\mathcal{S}$ ranges over all chains of subsets.
Attached to $\Delta_{E}$ is the permutohedral toric variety $X(\Delta_{E})$.
For $w\in S_{E}$, let $\sigma_w$ be the cone in $\Delta_{E}$ given by
\[\Span_{\geq 0}(e_{S_1},\dots,e_{S_n}),\]
where $S_1\subsetneq \dots\subsetneq S_n$ is the chain of subsets
\[\{w(0)\}\subsetneq \{w(0),w(1)\}\subsetneq\dots\subsetneq \{w(0),w(1),\dots,w(n)\}.\]

The matroid Chow ring \cite{FY:chow}, $A^*(M)=\Z[x_F]/(I+J)$, is generated by $x_F$ for proper non-empty flats $F$ with relations
\begin{align*}
I&=\left\langle x_{F_1}x_{F_2}\mid F_1,F_2 \text{ are not comparable}\right\rangle\\
J&=\left\langle \sum_{F\ni i} x_F- \sum_{F\ni j} x_F\mid i,j\in E\right\rangle.
\end{align*}
There is a natural homomorphism 
\[A^*(U_{n+1,n+1})\to A^*(M)\]
given by
\[x_S\mapsto 
\begin{cases}
x_S &\text{if $S$ is a flat of $M$}\\
0 &\text{else}.
\end{cases}\]
We will sometimes write $x_S$ for the image of $x_S$ under this homomorphism even when $S$ is not a flat of $M$. For a chain of subsets 
\[\mathcal{S}=\{\varnothing\subsetneq S_1\subsetneq\dots\subsetneq S_k\subsetneq E\},\]
we will write $x_{\mathcal{S}}\coloneqq x_{S_1}\dots x_{S_k}$.
By \cite{AHK}, $A^*(M)$ is a Poincar\'{e} duality ring of dimension $r$ equipped with a degree map
\[\deg_M\colon A^r(M)\to \Z,\]
characterized by the property that for any full flag of flats
\[\{\varnothing \subsetneq F_1\subsetneq F_2\subsetneq \dots\subsetneq F_r\subsetneq E\},\]
we have $\deg(x_{F_1}x_{F_2}\dots x_{F_r})=1.$

Write $\theta_i\colon A^*(M\setminus i)\to A^*(M)$ for the pullback defined in \cite[Section~3]{BHMPW:semismall} as
\[\theta_i(x_F)=x_F+x_{F\cup i}.\]
If $i$ is not a coloop of $M$, then $\theta_i$ commutes with the degree map in the sense
\[\deg_M\circ \theta_i=\deg_{M\setminus i}.\]
If $i$ is a coloop, for dimensional reasons, $\deg_M\circ\theta_i=0$. Henceforth, we will write $A^*(M)$ for $A^*(M)\otimes\R$.

\subsection{Hypersimplex Classes}

Let $S_{E}$ act on the standard unit basis vectors in $\RR^{E}$. The standard $k$-hypersimplex, $\Delta(n+1,k)$, is the convex hull of all vectors in the orbit of $e_{\{0,\dots,k-1\}}$ under this action. It lies in the hyperplane $x_0+x_1+\dots+x_n=k$. 
Attached to it is a class 
\[\gamma_{n+1-k}\in A^1(X(\Delta_{E}))\]
in the Chow cohomology ring of the permutohedral toric variety. It arises as the non-equivariant restriction of the support function to any translate $\Delta(n+1,k)-w$ for a vector $w$ with $w_0+\dots+w_n=k$. By \cite[Section~6.1]{CoxLittleSchenck}, this corresponds to the class in the equivariant Chow ring $A_T^1(\Delta_{E})$ given by the support function on $N$,
\[\varphi(u)=\min_{v\in \Delta(n,k)-w}(u\cdot v)\\
=\min_{|T|=k}(u\cdot e_T)-u\cdot w.
\]
The corresponding non-equivariant class is obtained as
\[-\sum_S \varphi(e_S)x_S.\]
We use the convention that $\gamma_k=0$ for $k\leq 0$ or $k\geq n+1$.

\begin{definition}
For a vector $(c_1,\dots,c_n)$ of nonnegative integers with $c_1+\dots+c_n=r$, we define {\em the matroidal mixed Eulerian number}
\[A_{c_1,\dots,c_n}(M)=\deg_M(\gamma_1^{c_1}\dots\gamma_n^{c_n}).\]
\end{definition}

Because the $\gamma_k$'s are given by convex functions and hence are nef, $A_{c_1,\dots,c_n}(M)$ is always nonnegative. For $M=U_{n+1,n+1}$, these specialize to Postnikov's mixed Eulerian numbers. Indeed, those numbers are described as mixed volumes of hypersimplexes. The matroidal mixed Eulerian numbers specialize to intersection numbers in $A^*(\Delta_{E})=A^*(X(\Delta_{E}))$. The connection then follows from the relationship between toric intersection theory and the polytope algebra \cite{FultonSturmfels}.

For a finite set $U$ and
$S,T\subseteq U$, the {\em over-intersection} of $S$ and $T$ in $U$ is
\[\OI_{U}(S,T)=|S\cap T|-\max(0,|S|+|T|-|U|),\]
i.e.,~the quantity by which the size of $S\cap T$ exceeds that which is expected for a ``generic'' choice of $S$ and $T$.
Let $\mult_{U}(|S|,k)=\min(|S|,k)-\frac{k}{|U|}|S|$.

\begin{lemma} \label{l:weights} We have the following identities for $\gamma_k$ (as elements of $A^1(M))$:
\begin{enumerate}
    \item $\gamma_k=\sum_{S\subset E} \OI_{E}(S,T)x_S$ for any set $T$ with $|T|=n+1-k$, and
    \item $\gamma_k=\sum_{S\subset E} \mult_{E}(|S|,k) x_S$.
\end{enumerate}
\end{lemma}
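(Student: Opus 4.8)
The plan is to evaluate directly the formula $\gamma_k=-\sum_S \varphi(e_S)x_S$ recorded just before the lemma, where $\varphi$ is the support function of a translate $\Delta(n+1,n+1-k)-w$ of the hypersimplex and $x_S$ denotes the image of the generator under $A^*(U_{n+1,n+1})\to A^*(M)$. Since $\gamma_k$ is the class attached to $\Delta(n+1,n+1-k)$, the displayed support function reads
\[\varphi(u)=\min_{|T|=n+1-k}(u\cdot e_T)-u\cdot w,\qquad w\cdot\mathbf{1}=n+1-k.\]
The first observation is that the resulting class in $A^1(M)$ is independent of the translate $w$: replacing $w$ by another vector with the same coordinate sum changes $\varphi$ by a globally linear function whose coefficients sum to zero, and such a function maps into the relation ideal $J$ under the passage from the equivariant to the non-equivariant Chow ring, hence vanishes. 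This freedom is precisely what allows the single class $\gamma_k$ to be presented in the two shapes (1) and (2); I will simply make two convenient choices of $w$.

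The one substantive computation is the value of $\min_{|T|=n+1-k}(e_S\cdot e_T)=\min_{|T|=n+1-k}|S\cap T|$. To minimize the overlap I would push $T$ into the complement $E\setminus S$, which has $n+1-|S|$ elements: if $n+1-k\le n+1-|S|$, i.e. $|S|\le k$, then $T$ can avoid $S$ entirely and the minimum is $0$; otherwise $T$ is forced to meet $S$ in exactly $(n+1-k)-(n+1-|S|)=|S|-k$ elements. Thus
\[\min_{|T|=n+1-k}|S\cap T|=\max(0,|S|-k)=\max\bigl(0,|S|+|T|-|E|\bigr).\]
This elementary extremal count is the main (and essentially the only) obstacle; everything after it is bookkeeping.

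For part (1), I take $w=e_T$ for the chosen set $T$ of size $n+1-k$ (so that $w\cdot\mathbf{1}=n+1-k$ as required); then $e_S\cdot w=|S\cap T|$ and
\[-\varphi(e_S)=|S\cap T|-\max\bigl(0,|S|+|T|-|E|\bigr)=\OI_E(S,T),\]
which gives the first identity. For part (2), I take the barycentric translate $w=\tfrac{n+1-k}{n+1}\mathbf{1}$, so that $e_S\cdot w=\tfrac{n+1-k}{n+1}|S|$. Using the identity $\max(0,|S|-k)=|S|-\min(|S|,k)$, I then compute
\[-\varphi(e_S)=\min(|S|,k)-|S|+\tfrac{n+1-k}{n+1}|S|=\min(|S|,k)-\tfrac{k}{n+1}|S|=\mult_E(|S|,k),\]
which is the second identity. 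Both conclusions follow upon substituting back into $\gamma_k=-\sum_S\varphi(e_S)x_S$, noting that the boundary terms $S=\varnothing$ and $S=E$ contribute zero in either normalization.
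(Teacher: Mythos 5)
Your proposal is correct and follows essentially the same route as the paper: both proofs plug the two translates $w=e_T$ and $w=\tfrac{n+1-k}{n+1}\mathbf{1}$ into the formula $\gamma_k=-\sum_S\varphi(e_S)x_S$ and evaluate $\min_{|T|=n+1-k}(e_S\cdot e_T)=\max(0,|S|+|T|-|E|)$. Your extra remark justifying independence of the translate (a globally linear function with coefficient sum zero dies in $A^1(M)$ via the relations $J$) is a welcome explicit touch that the paper leaves implicit, and your extremal count is stated correctly, whereas the paper's displayed formula $\max(0,|S|+(n-k+1)-n)$ contains an off-by-one misprint ($n$ in place of $n+1=|E|$).
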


\begin{proof}
For the first identity, take $w=e_T$. Observe that that $e_S\cdot w=|S\cap T|$ and 
\[\min_{|U|=n-k+1}(e_S\cdot e_U)=\max(0,|S|+(n-k+1)-n).\]
Consequently, 
\[-\sum_S \varphi(e_S)x_S=\OI_{E}(S,T)x_S.\]

For the second identity, take $w=\frac{n+1-k}{n+1}e_{E}$, and note that $e_S\cdot w=\frac{n+1-k}{n+1}|S|$. 
\end{proof}

Because the classes $\gamma_k$ arise from the normal fan to a polytope, they are convex. Alternatively one can verify that they are submodular (see \cite[p.~397]{AHK}), e.g.~for the $\OI$-description:
\begin{align*}
    \OI_{E}(\varnothing,T)&=0,\\
    \OI_{E}(E,T)&=0, \text{ and}\\
    \OI_{E}(S_1,T)+\OI(S_2,T)&\geq \OI(S_1\cap S_2,T)+\OI(S_1\cup S_2,T).
\end{align*}

\begin{rem}
  Observe that for $k=1$, we may pick $T=E\setminus\{i\}$ for any $i\in E$. Then $\gamma_1=\sum_{F\not\ni i}x_F.$
  For $k=n$, we can pick $T=\{i\}$ and see $\gamma_n=\sum_{F\ni i}x_F.$
  Thus $\gamma_1$ and $\gamma_n$ are the classes called $\beta$ and $\alpha$, respectively, in \cite{AHK}.
  Consequently, the coefficient of the reduced characteristic polynomial $\mu^i=\deg(\alpha^i\beta^{r-i})$ is a matroidal mixed Eulerian number. We will provide an alternative proof of this fact in Proposition~\ref{p:charpoly}.
\end{rem}

We will find the following expression for the hypersimplex classes helpful:
\begin{lemma} \label{l:flatsizesum} We have the following identity in $A^1(M)$:
\[\gamma_k=(n+1-k)\gamma_n-\sum_{\substack{F\\|F|\geq k+1}} (|F|-k)x_F.\]
\end{lemma}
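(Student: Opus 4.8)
The plan is to prove the identity by comparing the coefficient of each generator $x_F$ on the two sides, using the multiplicity description $\gamma_k=\sum_{S\subset E}\mult_E(|S|,k)x_S$ from Lemma~\ref{l:weights}(2). Since $x_S=0$ in $A^1(M)$ whenever $S$ is not a flat, every class in sight is a linear combination of the generators $x_F$ indexed by proper nonempty flats $F$, so it suffices to match coefficients termwise; no further Chow-ring relations are needed.

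First I would simplify $\gamma_n$. Because $|E|=n+1$ and every proper nonempty flat satisfies $|F|\leq n$, the minimum in $\mult_E(|F|,n)=\min(|F|,n)-\frac{n}{n+1}|F|$ is attained by $|F|$, so the coefficient collapses to $\frac{|F|}{n+1}$. Thus $\gamma_n=\sum_F\frac{|F|}{n+1}x_F$, and the first term on the right-hand side is $(n+1-k)\gamma_n=\sum_F\frac{(n+1-k)|F|}{n+1}x_F$.

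Next I would compare coefficients. Writing $s=|F|$, the coefficient of $x_F$ in $\gamma_k$ is $\min(s,k)-\frac{k}{n+1}s$, whereas on the right-hand side it is $\frac{(n+1-k)s}{n+1}-(s-k)$ when $s\geq k+1$ and $\frac{(n+1-k)s}{n+1}$ when $s\leq k$. For $s\leq k$ both expressions equal $\frac{(n+1-k)s}{n+1}$; for $s\geq k+1$ a one-line rearrangement of $\frac{(n+1-k)s}{n+1}-(s-k)$ gives $k-\frac{ks}{n+1}=\min(s,k)-\frac{k}{n+1}s$. This establishes the identity generator by generator.

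The argument is entirely computational and I do not anticipate any genuine obstacle. The only points that require a little care are the bookkeeping around the piecewise definition of $\mult_E$ (the $\min$ and the constraint $|F|\geq k+1$ in the sum) and the observation that the clean formula for $\gamma_n$ relies on flats being \emph{proper} subsets of $E$.
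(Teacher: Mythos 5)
Your proof is correct, but it takes a different route from the paper's. You work entirely with the $\mult$ description, Lemma~\ref{l:weights}(2), and verify the identity coefficient-by-coefficient on the generators $x_F$ (which is legitimate: you are proving the identity between the chosen formal representatives, which is stronger than, and implies, equality in $A^1(M)$). The paper instead uses the over-intersection description, Lemma~\ref{l:weights}(1): fixing $T\subset E$ with $|T|=n+1-k$, it first notes
\[(n+1-k)\gamma_n=\sum_{i\in T}\Bigl(\sum_{F\ni i}x_F\Bigr)=\sum_F |F\cap T|\,x_F,\]
and then expands $\OI_{E}(F,T)=|F\cap T|-\max\bigl(0,|F|+|T|-(n+1)\bigr)$, so that the $\max$ term unpacks in one line into exactly the sum $\sum_{|F|\geq k+1}(|F|-k)x_F$, with no case analysis. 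What the paper's choice buys is brevity: the auxiliary set $T$ makes the two terms of the right-hand side appear simultaneously from a single expansion. What your choice buys is that the coefficients $\mult_E(|F|,k)=\min(|F|,k)-\tfrac{k}{n+1}|F|$ depend only on $|F|$, so the whole lemma reduces to a scalar identity in $s=|F|$ checked in the two regimes $s\leq k$ and $s\geq k+1$, with no auxiliary choices at all; your observation that the formula $\gamma_n=\sum_F\tfrac{|F|}{n+1}x_F$ uses properness of the flats is the right point of care. Both arguments are sound and of comparable length.
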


\begin{proof}
    Fix $T\subset E$ with $|T|=n+1-k$. First observe
    \begin{align*}
        (n+1-k)\gamma_n&=\sum_{i\in T} \left(\sum_{F\ni i} x_F\right)\\
        &= \sum_{F} |F\cap T| x_F.
    \end{align*}
    Now, note
    \begin{align*}
        \gamma_k&=\sum_F \OI_{E}(F,T)x_F\\
        &=\sum_F \left(|F\cap T|-\max(0,|F|+|T|-(n+1))\right)x_F\\
        &=(n+1-k)\gamma_n-\sum_{\substack{F\\|F|\geq k+1}} (|F|-k)x_F. \qedhere
    \end{align*} 
\end{proof}

By convexity of the $\gamma_k$'s and the Hodge theory for $A^*(M)$ \cite[Lemma 9.6]{AHK}, the matroidal mixed Eulerian numbers satisfy a log-concavity property:
\begin{theorem} \label{t:logconcave} Let $c_1,\dots,c_n$ be nonnegative integers with $c_1+\dots+c_n=r-2$. Let $1\leq i,j\leq n$. Then,
\[\deg(\gamma_1^{c_1}\dots\gamma_n^{c_n}\gamma_i^2)\deg(\gamma_1^{c_1}\dots\gamma_n^{c_n}\gamma_j^2)\leq \deg(\gamma_1^{c_1}\dots\gamma_n^{c_n}\gamma_i\gamma_j)^2.\]
\end{theorem}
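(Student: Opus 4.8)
The plan is to recognize the two sides of the inequality as entries of a Gram matrix for a symmetric bilinear form built from the fixed classes $\gamma_1^{c_1}\dots\gamma_n^{c_n}$, and to deduce the inequality from the Hodge--Riemann relations for $A^*(M)$ together with a standard linear-algebra fact. Concretely, set $\ell=\gamma_1^{c_1}\dots\gamma_n^{c_n}\in A^{r-2}(M)$, which makes sense since $c_1+\dots+c_n=r-2$, and define a symmetric bilinear form on $A^1(M)$ by $Q(a,b)=\deg(\ell\,a\,b)$. The three degrees appearing in the theorem are precisely $Q(\gamma_i,\gamma_i)$, $Q(\gamma_j,\gamma_j)$, and $Q(\gamma_i,\gamma_j)$, so the assertion is exactly the statement that the Gram matrix $\begin{pmatrix} Q(\gamma_i,\gamma_i) & Q(\gamma_i,\gamma_j) \\ Q(\gamma_i,\gamma_j) & Q(\gamma_j,\gamma_j)\end{pmatrix}$ has nonpositive determinant, i.e.\ is not positive definite.

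The linear-algebra input is the following: if a real symmetric form $Q$ has at most one positive eigenvalue, then its Gram matrix on any two vectors has nonpositive determinant. Indeed, the restriction of $Q$ to the span of two vectors again has at most one positive eigenvalue, so the product of its two eigenvalues—equal to the Gram determinant up to a positive factor—is $\leq 0$; the degenerate case of linearly dependent vectors gives determinant $0$. Thus it suffices to show that $Q$ has at most one positive eigenvalue on $A^1(M)$. This is exactly the content of the (mixed) Hodge--Riemann relations in degree one: when the $r-2$ polarizing classes are ample, $Q$ is negative definite on the primitive subspace $\ker\bigl(A^1(M)\to A^{r-1}(M)\bigr)$ and positive on the complementary line, giving signature $(1,\dim A^1(M)-1)$; see \cite[Lemma 9.6]{AHK}.

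The one gap is that the classes $\gamma_k$ are merely nef (convex/submodular, as recorded after Lemma~\ref{l:weights}) rather than ample, so the Hodge--Riemann package does not apply to $\ell$ directly. To bridge this I would use that the nef cone is the closure of the ample cone and run a limiting argument: for polarizing classes in the interior of the ample cone the desired inequality $Q(\gamma_i,\gamma_i)Q(\gamma_j,\gamma_j)\leq Q(\gamma_i,\gamma_j)^2$ holds by the two steps above, and since each side is a polynomial—hence continuous—function of the polarizing classes while the test classes $\gamma_i,\gamma_j$ stay fixed, letting the ample classes approach $\gamma_1,\dots,\gamma_n$ preserves the (closed) inequality. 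Here one perturbs only the polarizing multiset $\{\gamma_k \text{ with multiplicity } c_k\}$; the test classes $\gamma_i,\gamma_j$ may remain nef, since in the Hodge--Riemann form only the polarizing classes need be ample.

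The main obstacle is this nef-versus-ample passage: one must ensure the mixed Hodge--Riemann statement is available for a product of distinct ample classes (not merely a power of a single ample class), and that the limit is organized correctly. Both are routine given \cite{AHK}: the property ``at most one positive eigenvalue'' is the condition $\lambda_2\leq 0$ on the ordered eigenvalues, which is closed because eigenvalues depend continuously on the form, so it persists under the limit from ample to nef polarizing classes. No positivity hypothesis on $Q(\gamma_i,\gamma_j)$ is needed for the Gram-determinant step, so the argument applies uniformly to all $1\leq i,j\leq n$.
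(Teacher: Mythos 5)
Your overall strategy---reading the three degrees as entries of the Gram matrix of $Q(a,b)=\deg(\ell\,ab)$ with $\ell=\gamma_1^{c_1}\cdots\gamma_n^{c_n}$, invoking the Hodge--Riemann relations for ample polarizations, and passing from ample to nef by a closed-condition limit---is the standard reconstruction of what the paper itself does, namely cite the Hodge theory of $A^*(M)$ \cite[Lemma~9.6]{AHK} together with convexity (nefness) of the $\gamma_k$'s. However, your linear-algebra lemma is false as stated: ``a symmetric form with at most one positive eigenvalue has nonpositive Gram determinant on any two vectors'' fails whenever the restriction of $Q$ to the span is negative definite. Take $Q=-I$ and two orthonormal vectors: the Gram matrix is $-I$, with determinant $+1$. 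Your justification (``the product of its two eigenvalues is $\leq 0$'') breaks down exactly there, since both eigenvalues of the restriction can be negative, and your parenthetical ``nonpositive determinant, i.e.\ is not positive definite'' is not an equivalence for $2\times 2$ symmetric matrices. This is not a vacuous worry here: under Hodge--Riemann, $Q$ is negative definite on a primitive subspace of codimension one in $A^1(M)$, so pairs of classes with strictly positive Gram determinant genuinely exist; the signature of $Q$ alone can never yield the inequality.

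What closes the gap is positivity of the \emph{diagonal} entries---precisely the hypothesis you declare unnecessary (you address only the off-diagonal entry $Q(\gamma_i,\gamma_j)$, but the issue is $Q(\gamma_i,\gamma_i)$ and $Q(\gamma_j,\gamma_j)$). Since the $\gamma_k$ are nef, degrees of products of nef classes are nonnegative (this is how the paper gets nonnegativity of the matroidal mixed Eulerian numbers), so $Q(\gamma_i,\gamma_i)\geq 0$ and $Q(\gamma_j,\gamma_j)\geq 0$. If either vanishes, the claimed inequality is trivial. If both are strictly positive, then a positive Gram determinant would, by Sylvester's criterion, make the restriction of $Q$ to $\Span(\gamma_i,\gamma_j)$ positive definite, giving $Q$ two positive eigenvalues and contradicting the Hodge--Riemann signature. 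With this correction your ample-to-nef limit is fine (both the inequality and ``at most one positive eigenvalue'' are closed conditions). The one remaining soft spot is your appeal to a \emph{mixed} Hodge--Riemann statement for a product of distinct ample classes, which \cite{AHK} proves for a single ample class; this needs either an explicit reference or the standard deformation argument, though it is the same input the paper itself delegates to \cite[Lemma~9.6]{AHK}.
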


For a flag of flats 
\[\varnothing\subsetneq F_1\subsetneq F_2\subsetneq\dots\subsetneq F_c\subsetneq E,\]
by inducting on \cite[Prop~2.25]{BHMPW:semismall}, there is an isomorphism $\psi_{\cF}$,
\[
A^*(M)/\ann(x_{F_1}x_{F_2}\dots x_{F_c})\cong
A^*(M^{F_1})\otimes A^*(M_{F_1}^{F_2})\otimes\dots \otimes A^*(M_{F_{c-a}}^{F_c})\otimes A^*(M_{F_c})
\]
induced by multiplication by $x_{F_1}x_{F_2}\dots x_{F_c}$.
Here, $M_{F_j}^{F_{j+1}}$ is a matroid on $F_{j+1}\setminus F_j$ with lattice of flats $[F_j,F_{j+1}]$.
Moreover, if one equips $A^*(M)/\ann(x_{F_1}x_{F_2}\dots x_{F_c})$ with the degree map
\[\deg\colon A^{r-c}(M)/\ann(x_{F_1}x_{F_2}\dots x_{F_c})\to\Z,\ y\mapsto \deg_M(yx_{F_1}x_{F_2}\dots x_{F_c}),\]
and equips $A^*(M^{F_1})\otimes A^*(M_{F_1}^{F_2})\otimes\dots \otimes A^*(M_{F_{c-a}}^{F_c})\otimes A^*(M_{F_c})$
with 
\[\deg_{M^{F_1}}\otimes\deg_{M_{F_1}^{F_2}}\otimes \dots \otimes \deg_{M_{F_c}},\]
then $\psi_{\cF}$ commutes with degree maps.

We now compute the images $\psi_{\cF}(\gamma_k)$ following \cite[Lemma 6.5]{BST}.

\begin{lemma} \label{l:productwithflag}
The image of $\gamma_k$ under $\psi_{\cF}$ is 
\[\begin{cases}
0 &\text{if $k=|F_j|$ for some $j$}\\
1^{\otimes j}\otimes \gamma_{k-|F_j|}\otimes 1^{\otimes(c-j)}&\text{if $|F_j|<k<|F_{j+1}|$ for some $j$}.
\end{cases}
\]
\end{lemma}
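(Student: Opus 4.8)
The plan is to reduce to the single-flat case and then carry out a direct computation in the $\mult$-representative of Lemma~\ref{l:weights}. Since $\psi_{\cF}$ is built by iterating the single-flat isomorphism $\phi_F\colon A^*(M)/\ann(x_F)\cong A^*(M^F)\otimes A^*(M_F)$ of \cite[Prop~2.25]{BHMPW:semismall}, it suffices to (i) establish the formula when $c=1$, and (ii) check that the resulting index and size shifts compose correctly along the flag.

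For the base case, I would fix a flat $F$ with $|F|=f$ and write $\gamma_k=\sum_S\mult_E(|S|,k)x_S$. Multiplying by $x_F$ kills every term indexed by a flat $S$ incomparable to $F$, leaving contributions from $S\subsetneq F$, from $S=F$, and from $S\supsetneq F$. I would use the standard generator images $\phi_F(x_S)=x_S\otimes 1$ for $\varnothing\ne S\subsetneq F$ and $\phi_F(x_S)=1\otimes x_S$ for $F\subsetneq S\subsetneq E$, together with the image of the flag generator itself,
\[\phi_F(x_F)=-\alpha_{M^F}\otimes 1-1\otimes\beta_{M_F},\]
which I would derive from relation $J$ by comparing $\sum_{G\ni i}x_G$ with $\sum_{G\ni j}x_G$ for $i\in F$ and $j\in E\setminus F$ inside the quotient. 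This writes $\phi_F(\gamma_k)$ as $B_1\otimes 1+1\otimes B_2$ with $B_1=\sum_{S\subsetneq F}\mult_E(|S|,k)x_S-\mult_E(f,k)\alpha_{M^F}$ and $B_2=\sum_{S\supsetneq F}\mult_E(|S|,k)x_S-\mult_E(f,k)\beta_{M_F}$.

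To simplify $B_1$ and $B_2$ I would invoke the linear relations $\sum_{\varnothing\ne S\subsetneq F}|S|x_S=f\,\alpha_{M^F}$ in $A^1(M^F)$ and $\sum_{S'}|S'|x_{S'}=(n+1-f)\alpha_{M_F}$ in $A^1(M_F)$ (each obtained by summing relation $J$ over all ground-set elements), which make the $\tfrac{k}{n+1}|S|$ terms in $\mult_E$ collapse. After this, $B_1=\sum_{S\subsetneq F}\min(|S|,k)x_S-\min(f,k)\alpha_{M^F}$, which equals $\gamma_k^{M^F}$ when $k<f$ and vanishes when $k\ge f$ (since then $\min(|S|,k)=|S|$ for every $S\subsetneq F$). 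Symmetrically, writing $k'=k-f$ and using $\min(|S|,k)=f+\min(|S|-f,k')$ for $S\supsetneq F$, one finds $B_2=\gamma_{k-f}^{M_F}$ when $k>f$ and $B_2=0$ when $k\le f$. Matching the three cases $k<f$, $k=f$, $k>f$ then yields the single-flat formula.

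Finally I would run the induction: peeling $F_1$ sends $\gamma_k$ to $\gamma_k^{M^{F_1}}\otimes 1$, to $0$, or to $1\otimes\gamma_{k-|F_1|}^{M_{F_1}}$ according to $k<|F_1|$, $k=|F_1|$, $k>|F_1|$; in the last case the remaining flats $F_2\subsetneq\cdots\subsetneq F_c$ are flats of $M_{F_1}$ of sizes $|F_j|-|F_1|$, so recursing with the shifted class $\gamma_{k-|F_1|}$ reproduces the conditions $|F_j|<k<|F_{j+1}|$ and accumulates the total shift $-|F_j|$ upon reaching the layer $M_{F_j}^{F_{j+1}}$. I expect the main obstacle to be the base-case bookkeeping: pinning down $\phi_F(x_F)$ and verifying that the ``off-layer'' components $B_1$ and $B_2$ genuinely cancel to $0$ in each regime. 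This hinges on the $\min$-identities above and on handling the convention $\gamma_{\le 0}=0$ through the honest $\mult$-formula rather than substituting zero prematurely.
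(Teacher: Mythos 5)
Your proposal is correct, but it takes a genuinely different route from the paper's proof. The paper proves the lemma in a single stroke, handling the whole flag at once, by exploiting the freedom in the $\OI$-representative of Lemma~\ref{l:weights}: choosing $T=(E\setminus F_{j+1})\cup T'$ with $T'\subset F_{j+1}\setminus F_j$ of size $|F_{j+1}|-k$, the coefficient $\OI_E(F,T)$ vanishes identically for every flat $F$ comparable with the flag but outside the interval $F_j\subseteq F\subsetneq F_{j+1}$, and for the surviving flats it equals $\OI_{F_{j+1}\setminus F_j}(F\setminus F_j,T')$, which is verbatim the $\OI$-representative of $\gamma_{k-|F_j|}$ in $A^*(M_{F_j}^{F_{j+1}})$; no induction and no formula for $\psi_{\cF}$ on generators is needed. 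Your argument instead peels off one flat at a time (which is legitimate, since the paper defines $\psi_{\cF}$ by iterating the one-flat isomorphism) at the cost of needing the image of the flag generator itself, $\phi_F(x_F)=-\alpha_{M^F}\otimes 1-1\otimes\beta_{M_F}$, and of the coefficient bookkeeping in the $\mult$-representative. I checked that this bookkeeping closes up: with $f=|F|$, the relation $\sum_{S\subsetneq F}|S|x_S=f\alpha_{M^F}$ gives $B_1=\sum_{S\subsetneq F}\min(|S|,k)x_S-\min(f,k)\alpha_{M^F}$, which is $\gamma_k^{M^F}$ for $k<f$ and $0$ for $k\geq f$; on the contraction side, in addition to the two relations you list, one also needs $\sum_{S'}x_{S'}=\alpha_{M_F}+\beta_{M_F}$, after which $B_2=\sum_{S'}\min(|S'|,k-f)x_{S'}+(f-k)\alpha_{M_F}+\bigl(f-\min(f,k)\bigr)\beta_{M_F}$, which vanishes for $k\leq f$ and equals $\gamma_{k-f}^{M_F}$ for $k>f$; the $k=f$ case then gives $0$ in both factors, matching the convention $\gamma_0=0$. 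What each approach buys: the paper's choice-of-$T$ trick is shorter and sidesteps any computation of $\psi_{\cF}$ on generators; your proof is more explicit and reusable, as it determines $\psi_{\cF}$ on every generator (including $x_F$) and makes transparent, via the $\min$-identity $\min(|S|,k)=f+\min(|S|-f,k-f)$, why the hypersimplex class lands in exactly one tensor factor with the stated index shift.
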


\begin{proof}
There is a unique $j$ such that $|F_j|\leq k<|F_{j+1}|$. Pick $T'\subset F_{j+1}\setminus F_j$ of size $|F_{j+1}|- k$, and set $T=(E\setminus F_{j+1})\cup T'$. Then,
  \begin{align*}
      x_{F_1}x_{F_2}\dots x_{F_c}\gamma_k
      &=  x_{F_1}x_{F_2}\dots x_{F_c}\left(\sum_F \OI_{E}(F,T)x_F\right)\\
      &=  x_{F_1}x_{F_2}\dots x_{F_c}\left(\sum_{F\colon F_j\subseteq F\subset F_{j+1}} \OI_{E}(F,T)x_F\right)\\
      &=  x_{F_1}x_{F_2}\dots x_{F_c}\left(\sum_{F\colon F_j\subseteq F\subset F_{j+1}} \OI_{F_{j+1}\setminus F_j}(F\setminus F_j,T')x_F\right)
  \end{align*}
  where the second equality is a consequence of our choice of $T$.
  The conclusion follows from noting that the sum is a description of $\gamma_{k-|F_j|}$ in $A^*(M_{F_j}^{F_{j+1}})$.
\end{proof}

The following is immediate from the above Lemma and Poincar\'{e} duality of $A^*(M)$.

\begin{corollary} \label{c:annihilateflat}
For any flat $F$ with $|F|=k$, $x_F\gamma_k=0$.     
\end{corollary}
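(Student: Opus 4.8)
The plan is to read off the vanishing directly from Lemma~\ref{l:productwithflag}, applied to the shortest admissible flag. I would take $\cF$ to be the flag consisting of the single proper nonempty flat $F$, so that $c=1$ and $F_1=F$ (this is legitimate precisely because $F$ is a proper nonempty flat, hence $x_F$ is one of the generators of $A^*(M)$). With the boundary conventions $F_0=\varnothing$ and $F_{c+1}=E$, the unique index $j$ with $|F_j|\le k<|F_{j+1}|$ is $j=1$, since $|F_1|=|F|=k$ while $|F_2|=|E|=n+1>k$. We are therefore in the first case of Lemma~\ref{l:productwithflag}, which gives $\psi_{\cF}(\gamma_k)=0$; concretely, the class that the proof of that Lemma attaches to $\gamma_k$ in the factor $A^*(M_F)$ is $\gamma_{k-|F|}=\gamma_0$, which vanishes by the convention $\gamma_0=0$.

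To conclude, I would invoke that
\[\psi_{\cF}\colon A^*(M)/\ann(x_F)\xrightarrow{\ \sim\ } A^*(M^F)\otimes A^*(M_F)\]
is an isomorphism, an incarnation of the Poincar\'e duality structure on $A^*(M)$ obtained by inducting on \cite[Prop~2.25]{BHMPW:semismall}. In particular $\psi_{\cF}$ is injective, so $\psi_{\cF}(\gamma_k)=0$ forces the class of $\gamma_k$ to be zero in $A^*(M)/\ann(x_F)$. Unwinding the definition of the annihilator, this says exactly that $\gamma_k\in\ann(x_F)$, i.e.\ $x_F\gamma_k=0$ in $A^*(M)$, as desired.

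As an alternative that makes the role of Poincar\'e duality explicit, one can bypass injectivity and instead use nondegeneracy of the pairing $A^2(M)\times A^{r-2}(M)\to\Z$: it suffices to check $\deg_M(x_F\gamma_k\,z)=0$ for every $z\in A^{r-2}(M)$, and since $\psi_{\cF}$ is a degree-compatible ring isomorphism this reads $\deg_M(x_F\gamma_k\,z)=\deg\bigl(\psi_{\cF}(\gamma_k)\,\psi_{\cF}(z)\bigr)=0$. I expect no genuine obstacle here; the only points deserving care are the use of the degenerate single-flat flag and the convention $\gamma_0=0$ that collapses the relevant tensor factor, both of which are bookkeeping rather than substance.
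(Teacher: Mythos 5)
Your proof is correct and is exactly the argument the paper intends: the paper's one-line justification ("immediate from the above Lemma and Poincar\'e duality of $A^*(M)$") unwinds precisely to your application of Lemma~\ref{l:productwithflag} to the one-step flag $\varnothing\subsetneq F\subsetneq E$ together with the injectivity of $\psi_{\cF}$ on $A^*(M)/\ann(x_F)$. Your bookkeeping (properness of $F$, the convention $\gamma_0=0$) is also the right level of care, so there is nothing to add.
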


Write $\zeta=\sum_{i=1}^n \gamma_i$. The volume of this class, i.e.,~$\on{PVol}(M)=\deg_M(\zeta^{r})$ is called the {\em (standard) permutohedral volume} and is described in terms of the Dilworth truncation of $M$ in \cite[Theorem~7.1.6]{Eur:thesis}. Because the Minkowski sum of the hypersimplexes $\sum_{i=1}^n \Delta(n+1,i)$ is the standard permutohedron, we have $\on{PVol}(U_{n+1,n+1}) = n! (n+1)^{n-1}$, where $(n+1)^{n-1}$ is the volume of this permutohedron. The following is an immediate consequence of the multinomial expansion of $\zeta^r$:

\begin{lemma} \label{l:pvol}
We have
\[\sum_{(c_1,\dots,c_n)} \frac{r!}{c_1!\dots c_n!}A_{c_1,\dots,c_n}(M)=\deg_M(\zeta^r)\]
where the sum is over nonnegative $c_1,\dots,c_n$ with $c_1+\dots+c_n=r$.
\end{lemma}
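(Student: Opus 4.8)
The plan is to expand the $r$-th power of $\zeta=\sum_{i=1}^n\gamma_i$ using the multinomial theorem and then apply the degree map $\deg_M$ to both sides. This is a purely formal manipulation in the graded ring $A^*(M)$, relying only on the fact that multiplication of the commuting classes $\gamma_1,\dots,\gamma_n$ is governed by the usual multinomial coefficients.

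\begin{proof}
Since the classes $\gamma_1,\dots,\gamma_n$ lie in the commutative ring $A^*(M)$, the multinomial theorem gives
\[
\zeta^r=\left(\sum_{i=1}^n\gamma_i\right)^r=\sum_{(c_1,\dots,c_n)}\frac{r!}{c_1!\dots c_n!}\gamma_1^{c_1}\dots\gamma_n^{c_n},
\]
where the sum ranges over all tuples of nonnegative integers $(c_1,\dots,c_n)$ with $c_1+\dots+c_n=r$. Every term $\gamma_1^{c_1}\dots\gamma_n^{c_n}$ lies in $A^r(M)$, as does $\zeta^r$, so we may apply the degree map $\deg_M\colon A^r(M)\to\Z$. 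By linearity of $\deg_M$ and the definition $A_{c_1,\dots,c_n}(M)=\deg_M(\gamma_1^{c_1}\dots\gamma_n^{c_n})$, we obtain
\[
\deg_M(\zeta^r)=\sum_{(c_1,\dots,c_n)}\frac{r!}{c_1!\dots c_n!}\deg_M(\gamma_1^{c_1}\dots\gamma_n^{c_n})=\sum_{(c_1,\dots,c_n)}\frac{r!}{c_1!\dots c_n!}A_{c_1,\dots,c_n}(M),
\]
as claimed.
\end{proof}

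There is essentially no obstacle here: the only point requiring care is that the multinomial expansion is valid because the $\gamma_i$ commute in $A^*(M)$, and that $\deg_M$ is well-defined and linear on $A^r(M)$, both of which are guaranteed by the Poincar\'e duality structure recalled above from \cite{AHK}. The statement is flagged as ``immediate'' in the excerpt precisely because it reduces to this one-line application of the multinomial theorem followed by linearity of the degree map.
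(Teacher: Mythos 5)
Your proof is correct and takes exactly the route the paper intends: the paper dispenses with this lemma as ``an immediate consequence of the multinomial expansion of $\zeta^r$,'' which is precisely your expansion of $\left(\sum_{i=1}^n \gamma_i\right)^r$ followed by linearity of $\deg_M$. Nothing is missing; your write-up simply makes the one-line argument explicit.
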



\section{Relations} \label{s:relations}

The relations satisfied by the matroidal mixed Eulerian numbers are complicated but simplify significantly when one considers some special cases.

\begin{definition}
    The {\em support} of a vector $(c_1,\dots,c_n)$ of nonnegative integers is the set
    \[\Supp(c_1,\dots,c_n)\coloneqq \{i\mid c_i\neq 0\}.\]
\end{definition}

\begin{definition}
    A support set is {\em contiguous} if there exist positive integers $a$ and $b$ such that $\Supp(c)=\{k\mid a\leq k\leq b\}$.
    
    A support set is {\em flatly contiguous} (with respect to the matroid $M$) if there exist positive integers $a$ and $b$ such that $\Supp(c)\subseteq \{k\mid a\leq k\leq b\}$, and
    for any flat $F$ with $a\leq|F|\leq b$, then $|F|\in\Supp(c)$.
\end{definition}

To write relations, it will be helpful to describe matroidal mixed Eulerian numbers a bit  differently. Let $s\leq r$ be a nonnegative integer, and let $v=(v_1,\dots,v_{r-s})\in \N^{r-s}$ be a vector of positive integers.
Write 
\[C_{v,s}(M)=\deg_M(\gamma_{v_1}\gamma_{v_2}\dots\gamma_{v_{r-s}}\gamma_n^s).\]
Then $A_{c_1,\dots,c_n}(M)=C_{v,0}(M)$ for
$v=1^{c_1}2^{c_2}\dots (n-1)^{c_{n-1}}n^{c_n}$,
where $i^{c_i}$ means $i$ appears in $c_i$ consecutive components.
Observe that $C_{v,s}(M)=C_{v,0}(\operatorname{Tr}^s(M))$
where $\operatorname{Tr}^s(M)$ denotes the $s$-fold truncation of $M$.
We may write $\gamma_v$ or $\gamma_{v_1}\gamma_{v_2}\dots\gamma_{v_{r-s}}$.
For a positive integer $k$, write
\[v-k\mathbf{1}=(v_1-k,\dots,v_{r-s}-k).\]
The vector $v$ is {\em sorted} if $v_1\leq v_2\leq \dots\leq v_{r-s}$. Write $R(v,k)\in\N^{r-s-1}$ for the vector obtained by removing the $k$th component of $v$. 

We can define the support of $v$ to be 
\[\Supp(v)=\{v_1,v_2,\dots,v_{r-s}\}.\]
 We say $v$ is {\em (flatly) contiguous} if its support is (flatly) contiguous, in which case, we also say the matroidal mixed Eulerian number $C_{v,s}(M)$ is (flatly) contiguous. Observe that even then, $C_{v,s}(M)=\deg_M(\gamma_v\gamma_n^s)$ is the degree of a product that may not have contiguous support.

Write $f_j=1^{j}0^{r-s-j}$.
It is easily seen that if $v$ is contiguous and sorted, then $R(v,k)+f_{k-1}$ is contiguous. We extend the definition of $C_{v,s}(M)$ to vectors of integers with the convention that $C_{v,s}(M)=0$ if any of the components of $v$ are non-positive.

\begin{lemma} \label{l:contiguousvanishing}
    Let $F$ be a flat of $M$, and let $(v_1,\dots,v_{r-1})\in\N^{r-1}$ be flatly contiguous. 
    Then,
    \[\deg_M(x_F\gamma_v)=0\]
    unless $\rk(F)=1$ or $\rk(F)=r$.
\end{lemma}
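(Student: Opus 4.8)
The plan is to reduce the statement to the flag-splitting isomorphism $\psi_{\cF}$ applied to a flag of length one, and to use Lemma~\ref{l:productwithflag} to see that whenever $\rk(F)\neq 1,r$ the truncated product of hypersimplex classes acquires a vanishing factor. Concretely, the flat $F$ determines a one-step flag $\cF=\{\varnothing\subsetneq F\subsetneq E\}$, and by the discussion preceding Lemma~\ref{l:productwithflag} there is a degree-compatible isomorphism
\[
\psi_{\cF}\colon A^*(M)/\ann(x_F)\cong A^*(M^F)\otimes A^*(M_F),
\]
under which $\deg_M(x_F\gamma_v)$ is computed as a product of degrees on the two factors. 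Write $|F|=m$, so that $A^*(M^F)$ has dimension $m-1$ (this is $\rk(F)-1$ in terms of rank, but the key is the size $m$) and $A^*(M_F)$ has the complementary dimension $r-\rk(F)$.

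First I would apply Lemma~\ref{l:productwithflag} to rewrite each $\psi_{\cF}(\gamma_{v_i})$. For the single flat $F$, that lemma says $\psi_{\cF}(\gamma_k)=0$ when $k=|F|=m$, equals $\gamma_k\otimes 1$ when $k<m$, and equals $1\otimes\gamma_{k-m}$ when $k>m$. Thus, provided no $v_i$ equals $m$, the product $\psi_{\cF}(\gamma_v)$ factors as a tensor product of a degree-$a$ class on $A^*(M^F)$ and a degree-$b$ class on $A^*(M_F)$, where $a$ counts the $v_i<m$ and $b$ counts the $v_i>m$. The degree on the left then vanishes unless these multiplicities exactly match the dimensions of the two factors, i.e.~unless $a=m-1$ and $b=r-\rk(F)$, by Poincar\'{e} duality (a class of the wrong degree has degree zero). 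The heart of the argument is then a counting/contiguity obstruction: I would show that flat contiguity of $v$ forces at least one $v_i$ to equal $m$ (killing the product outright), \emph{or} forces a mismatch between these multiplicities and the target dimensions, \emph{except} when $F$ has rank $1$ or $r$.

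The main obstacle, and the step deserving the most care, is this dimension-count. The flat contiguity hypothesis guarantees that every flat size lying in the relevant window appears in $\Supp(v)$; in particular $m=|F|$ itself should be forced into $\Supp(v)$ once we know some $v_i$ straddles it, which would give a factor $\psi_{\cF}(\gamma_m)=0$ and finish the argument immediately. I would therefore argue by cases on where $m$ sits relative to the contiguous range $[a,b]$ carrying $\Supp(v)$: if $m$ lies strictly inside the window then contiguity forces $m\in\Supp(v)$ and the product dies; if $m$ lies at or outside an endpoint, then all $v_i$ lie on one side of $m$, so either $a$ or $b$ is zero, and one of the two tensor factors receives no hypersimplex class at all. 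That factor must then contribute its top degree purely from the identity, which is possible only if its dimension is zero — i.e.~only if $M^F$ is a point ($\rk(F)=1$) or $M_F$ is a point ($\rk(F)=r$). Reconciling the bookkeeping of flat sizes versus ranks (since contiguity is phrased in terms of $|F|$ but Poincar\'{e} duality degrees are governed by ranks) is the one place where I expect to need to invoke looplessness and the simplification structure carefully.

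I would close by noting that the two surviving cases $\rk(F)=1$ and $\rk(F)=r$ are exactly those excluded in the statement, so in every other case $\deg_M(x_F\gamma_v)=0$, as claimed.
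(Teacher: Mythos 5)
Your proposal is correct and follows essentially the same route as the paper's proof: split along the one-step flag $\{\varnothing\subsetneq F\subsetneq E\}$, use Lemma~\ref{l:productwithflag} to send each $\gamma_{v_i}$ to one tensor factor (or kill the product when $v_i=|F|$), and note that when $|F|$ lies outside the window one factor receives only $1$, whose degree vanishes unless that factor has dimension zero, i.e.\ $\rk(F)=1$ or $\rk(F)=r$. Two small points to tidy: the dimension of $A^*(M^F)$ is $\rk(F)-1$, not $|F|-1$ (your final step already invokes the rank correctly, so nothing breaks), and when $|F|$ equals an endpoint $a$ or $b$, flat contiguity already puts $|F|\in\Supp(v)$, so that case belongs to your ``product dies'' branch rather than the ``all $v_i$ on one side'' branch.
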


\begin{proof}
  Pick $a, b$ as in the definition of contiguity. If $|F|\in \Supp(v)$, then $\deg_M(x_F\gamma_v)=0$ by Corollary \ref{c:annihilateflat}. Hence $|F|<a$ or $|F|>b$. Consider the case $|F|<a$. Now,
  \[\deg_M(x_F\gamma_v)=\deg_{M^F}(1)\deg_{M_F}(\gamma_{v-|F|\mathbf{1}}).\]
  This quantity is $0$ unless $M^F$ is of rank $1$ which occurs only if $\rk(F)=1$.
  The argument for $|F|>b$ is analogous.
\end{proof}

\subsection{Eulerian Relation}

The classical Eulerian recurrence
\[ A(n,k-1) = (n-k+1)A(n-1,k-2) + kA(n-1,k-1) \]
can be rewritten as
\[ \deg_{U_{n+1,n+1}}(\gamma_k^n) = (n-k+1) \deg_{U_{n,n}}(\gamma_{k-1}^{n-1}) + k \deg_{U_{n,n}}(\gamma_k^{n-1}). \]
This recurrence generalizes to contiguous matroidal mixed Eulerian numbers.

\begin{prop} \label{p:eulerianrelation}
  Let $v=(v_1,\dots,v_r)\in\N^r$ be a sorted flatly contiguous vector. Let $j\in\{1,\dots,r\}$ be chosen such that the positive integer $v_j$ occurs at least twice among the components of $v$.
  Let $T\subset E$ be a subset of size $(n+1)-v_j$. Then
  \begin{align*}
  C_{v,0}(M)&=\sum_{\substack{F\\\rk(F)=1}} \OI_{E}(F,T) C_{R(v,j)-\mathbf{1},0}(M_F)\\
  &+\sum_{\substack{F\\\rk(F)=r}} \OI_{E}(F,T)C_{R(v,j),0}(M^F)
   \end{align*}
   Also,
\begin{align*}
  C_{v,0}(M)&=\mult_{E}(|F|,v_j)C_{R(v,j)-\mathbf{1},0}(M_F)\\
  &+\sum_{\substack{F\\\rk(F)=r}}\mult_{E}(|F|,v_j)C_{R(v,j),0}(M^F)       
\end{align*}
\end{prop}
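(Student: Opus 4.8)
The plan is to single out one factor $\gamma_{v_j}$ from the product defining $C_{v,0}(M)=\deg_M(\gamma_{v_1}\cdots\gamma_{v_r})$ and expand it over flats using Lemma~\ref{l:weights}. Writing $\gamma_{R(v,j)}$ for the product $\gamma_{v_1}\cdots\gamma_{v_{j-1}}\gamma_{v_{j+1}}\cdots\gamma_{v_r}$ of the remaining $r-1$ classes, the first identity of Lemma~\ref{l:weights} gives, for any $T$ of size $(n+1)-v_j$,
\[
C_{v,0}(M)=\deg_M\!\Big(\gamma_{R(v,j)}\sum_{F}\OI_{E}(F,T)\,x_F\Big)=\sum_{F}\OI_{E}(F,T)\,\deg_M\!\big(\gamma_{R(v,j)}x_F\big),
\]
the sum running over flats $F$ (non-flats contribute $0$). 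The second displayed formula is obtained by the identical argument, using instead the second identity of Lemma~\ref{l:weights}, i.e.\ replacing $\OI_{E}(F,T)$ by $\mult_{E}(|F|,v_j)$ throughout; I will not repeat that computation.

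The crux is that only flats of rank $1$ and rank $r$ survive, and this is exactly where the hypothesis that $v_j$ occurs at least twice is used. Removing a single occurrence of $v_j$ leaves $\Supp(R(v,j))=\Supp(v)$, so $R(v,j)\in\N^{r-1}$ is again flatly contiguous with respect to $M$; had $v_j$ occurred only once a gap would open at $v_j$ and flats of intermediate size could contribute. Applying Lemma~\ref{l:contiguousvanishing} to $R(v,j)$ then forces $\deg_M(\gamma_{R(v,j)}x_F)=0$ unless $\rk(F)\in\{1,r\}$, collapsing the sum to the two displayed families.

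To evaluate the surviving terms I would use the flag isomorphism $\psi_{\{F\}}\colon A^*(M)/\ann(x_F)\cong A^*(M^F)\otimes A^*(M_F)$, its compatibility with degrees, and Lemma~\ref{l:productwithflag}. For a rank-$r$ flat $F$ the factor $A^*(M_F)$ has dimension $0$, so each $\gamma_{v_i}$ with $i\neq j$ must map to $\gamma_{v_i}\otimes 1\in A^*(M^F)$; terms with $v_i\geq|F|$ vanish (convention $\gamma_k=0$ for $k\geq|F|$ in $A^*(M^F)$), giving $\deg_M(\gamma_{R(v,j)}x_F)=\deg_{M^F}(\gamma_{R(v,j)})=C_{R(v,j),0}(M^F)$. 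Symmetrically, for a rank-$1$ flat $F$ the restriction factor $A^*(M^F)$ is trivial and each surviving class maps to $1\otimes\gamma_{v_i-|F|}$ in the rank-$r$ contraction $A^*(M_F)$, so the term equals $C_{R(v,j)-|F|\mathbf{1},0}(M_F)$.

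The main obstacle I anticipate is bookkeeping rather than conceptual. Lemma~\ref{l:productwithflag} shifts each surviving class down by $|F|$, so a rank-one flat produces $C_{R(v,j)-|F|\mathbf{1},0}(M_F)$; this reduces to the stated $C_{R(v,j)-\mathbf{1},0}(M_F)$ precisely when the contributing rank-one flats are singletons, $|F|=1$ (as for $U_{n+1,n+1}$, where the identity recovers the classical Eulerian recurrence). I would therefore either work in the simple case or record the general shift by $|F|$. The remaining care is to confirm that the degenerate terms vanish on both sides simultaneously: for a rank-$r$ flat with $|F|\in\Supp(v)$ one of the retained classes is $\gamma_{|F|}=0$ in $A^*(M^F)$ (using that $v_j$ repeats), while for $|F|<\min\Supp(v)$ every $\gamma_{v_i}$ vanishes in $A^*(M^F)$, and dually for rank-$1$ flats the contraction classes $\gamma_{v_i-|F|}$ vanish when $|F|>\max\Supp(v)$. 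This guarantees that summing over all rank-$1$ and all rank-$r$ flats reproduces exactly the two formulas.
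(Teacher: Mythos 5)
Correct, and essentially identical to the paper's own proof: expand $\gamma_{v_j}$ by Lemma~\ref{l:weights}, observe that $R(v,j)$ is still sorted and flatly contiguous (exactly the role of the multiplicity-two hypothesis), discard all flats of rank other than $1$ and $r$ by Lemma~\ref{l:contiguousvanishing}, and evaluate the surviving terms by Lemma~\ref{l:productwithflag}. Your ``bookkeeping'' caveat is in fact a genuine catch that the paper's one-line conclusion glosses over: Lemma~\ref{l:productwithflag} produces $C_{R(v,j)-|F|\mathbf{1},0}(M_F)$ for a rank-one flat $F$, which agrees with the stated $C_{R(v,j)-\mathbf{1},0}(M_F)$ only when the contributing rank-one flats are singletons; for a non-simple matroid (e.g.\ rank $3$ on $\{0,1,2,3,4\}$ with $0,1$ parallel, $v=(3,3)$, and $T=\{0,2\}$) the formula as printed is not even independent of the choice of $T$ and evaluates to $3$ rather than the correct $\deg_M(\gamma_3^2)=4$, so the proposition needs either a simplicity hypothesis or the $|F|\mathbf{1}$ shift you record.
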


\begin{proof}
  Observe that $R(v,j)$ is still sorted and flatly contiguous. 
  Now, 
  \[\gamma_v=\gamma_{v_j}\gamma_{R(v,j)}=\sum_F \OI_{E}(F,T)x_F\gamma_{R(v,j)}.\]
  By Lemma~\ref{l:contiguousvanishing}, the only nonzero terms in the sum correspond to flats of rank $1$ and rank $r$. The conclusion follows from Lemma~\ref{l:productwithflag}.

  The second formula follows from applying the second description of $\gamma_{v_j}$ in Lemma~\ref{l:weights}.
\end{proof}

We can also make sense of matroidal mixed Eulerian numbers coming from $(v_1,\dots,v_r)$ whose support consists of two flatly contiguous blocks, one containing $1$ and the other containing the size of the largest proper flat.

\begin{lemma} \label{l:contiguousblocks}
    Let $v\in \N^{\ell}$ and $w\in \N^{r-{\ell}}$ be sorted flatly contiguous vectors such that 
    \begin{enumerate}
        \item $\Supp(v)\cap\Supp(w)= \varnothing$,
        \item $1 \in\Supp(v)$, and
        \item if $F$ is a proper flat of maximal size, then $|F|\in \Supp(w)$.
    \end{enumerate}
    Let $w'=(w_2,\dots,w_{r-\ell})$.
    Then,
    \[\deg_M(\gamma_v\gamma_w)=\sum_{F:\on{rk}(F)=\ell+1} \OI_{E}(|F|,T)\deg_{M^F}(\gamma_v)\deg_{M_F}(\gamma_{w'-|F|\mathbf{1}})
    \]
    for any $T\subset E$ with $|T|=n+1-w_1.$
\end{lemma}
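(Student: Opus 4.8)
The plan is to mimic the proof of Proposition~\ref{p:eulerianrelation}: peel the lowest class $\gamma_{w_1}$ off of $\gamma_w$, expand it as a weighted sum of flats, and then use the splitting isomorphism $\psi_F$ together with the flat-contiguity hypotheses to show that only rank-$(\ell+1)$ flats survive. First I would write $\gamma_w=\gamma_{w_1}\gamma_{w'}$ and apply the first identity of Lemma~\ref{l:weights} to the chosen $T$ with $|T|=n+1-w_1$, giving $\gamma_{w_1}=\sum_F \OI_E(F,T)x_F$ (the sum effectively ranging over proper nonempty flats, since $x_S=0$ for non-flats while $\OI_E(\varnothing,T)=\OI_E(E,T)=0$). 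Hence $\deg_M(\gamma_v\gamma_w)=\sum_F \OI_E(F,T)\deg_M(x_F\gamma_v\gamma_{w'})$, and it remains to evaluate each term $\deg_M(x_F\gamma_v\gamma_{w'})$.

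For a fixed flat $F$ I would use the $c=1$ case of the isomorphism $\psi_F\colon A^*(M)/\ann(x_F)\cong A^*(M^F)\otimes A^*(M_F)$ together with its compatibility with degree maps, so that $\deg_M(x_F\gamma_v\gamma_{w'})=(\deg_{M^F}\otimes\deg_{M_F})(\psi_F(\gamma_v\gamma_{w'}))$. By Lemma~\ref{l:productwithflag}, each factor satisfies $\psi_F(\gamma_c)=\gamma_c\otimes 1$ if $c<|F|$, $\psi_F(\gamma_c)=1\otimes\gamma_{c-|F|}$ if $c>|F|$, and $\psi_F(\gamma_c)=0$ if $c=|F|$. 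Thus the term vanishes unless $|F|\notin\Supp(v)\cup\Supp(w')$, in which case it equals the product of a degree in $A^*(M^F)$ coming from the factors with value below $|F|$ and a degree in $A^*(M_F)$ coming from those above $|F|$.

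The combinatorial heart, and the step I expect to be the main obstacle, is to show that the surviving flats have rank exactly $\ell+1$. Here I would use flat contiguity of $v$: since $1\in\Supp(v)$ and every flat size at most $\max\Supp(v)$ lies in $\Supp(v)$, any flat with $|F|\notin\Supp(v)$ must satisfy $|F|>\max\Supp(v)$, so all $v_i<|F|$. Dually, flat contiguity of $w$ together with the assumption that the size of the largest proper flat lies in $\Supp(w)$ forces any flat with $w_1<|F|$ to have $|F|\in\Supp(w)$, hence, as $w$ is sorted, $|F|\in\Supp(w')$, killing the term; so surviving flats have $|F|<\min\Supp(w')$, whence all $w'_j>|F|$. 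Counting, a surviving $F$ sends all $\ell$ factors of $v$ into $A^*(M^F)$ and all $r-\ell-1$ factors of $w'$ into $A^*(M_F)$, and matching top degrees forces $\rk(F)-1=\ell$, i.e.\ $\rk(F)=\ell+1$; the term then becomes exactly $\OI_E(F,T)\deg_{M^F}(\gamma_v)\deg_{M_F}(\gamma_{w'-|F|\mathbf1})$.

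To finish I would check that the right-hand side, written as a sum over \emph{all} rank-$(\ell+1)$ flats, agrees term by term with this surviving set. Any rank-$(\ell+1)$ flat with $|F|\in\Supp(v)$ contributes $0$, since $M^F$ has ground set of size $|F|$ and the convention makes $\gamma_{|F|}=0$ in $A^*(M^F)$; any rank-$(\ell+1)$ flat with $|F|\ge\min\Supp(w')$ contributes $0$ because some index $w'_j-|F|\le 0$ gives $\gamma_{w'_j-|F|}=0$ in $A^*(M_F)$. These are precisely the non-surviving flats, so summing over all rank-$(\ell+1)$ flats reproduces the claimed formula. I would also note that $\OI_E(|F|,T)$ in the statement is shorthand for $\OI_E(F,T)$ appearing in the expansion.
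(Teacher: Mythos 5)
Your proposal is correct and follows essentially the same route as the paper's proof: peel off $\gamma_{w_1}$ via the $\OI$-expansion from Lemma~\ref{l:weights}, kill terms using Lemma~\ref{l:productwithflag} together with the flat-contiguity hypotheses, and pin down $\rk(F)=\ell+1$ by dimension considerations. Your write-up in fact spells out two points the paper leaves implicit --- the deduction of $\max(\Supp(v))<|F|<\min(\Supp(w'))$ from hypotheses (1)--(3), and the check that non-surviving rank-$(\ell+1)$ flats contribute zero to the right-hand side via the convention $\gamma_k=0$ for $k\leq 0$ --- but the underlying argument is the same.
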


\begin{proof}
    We write
    \[\deg_M(\gamma_v\gamma_w)=\sum_F \OI_{E}(|F|,T)\deg(\gamma_v x_F\gamma_{w'}).\]
    The only terms that contribute must satisfy $\max(\Supp(v))<|F|<\min(\Supp(w'))$
    by Lemma~\ref{l:productwithflag}. Therefore, the sum equals
    \[\sum_F \OI_{E}(|F|,T)\deg_{M^F}(\gamma_v)\deg_{M_F}(\gamma_{w'-|F|\mathbf{1}})\]
    which, by dimension considerations, only has contributions from $F$ with $\rk(F)=\ell+1$.
\end{proof}

\subsection{Deletion/Contraction Relations}

We study how the hypersimplex classes behave under $\theta_i$ to prove a deletion/contraction relation.
\begin{lemma} \label{l:twists}
    For $1\leq \ell\leq n$, we have the identities
    \[  \gamma_\ell=\theta_i(\gamma_{\ell-1})+\sum_{\substack{S\not\ni i\\|S|\geq \ell}} x_S,\ \ 
        \gamma_\ell=\theta_i(\gamma_\ell)+\sum_{\substack{S\ni i\\|S|\leq \ell}} x_S.\]
\end{lemma}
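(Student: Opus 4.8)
The plan is to prove both identities directly in $A^1(M)$ by representing each hypersimplex class via the $\OI$-weighted sum of Lemma~\ref{l:weights}(1) for a judiciously chosen set $T$, and then comparing the coefficient of each generator $x_S$. First I set up the bookkeeping of ground-set sizes: for $M\setminus i$ the ground set $E'=E\setminus\{i\}$ has $n$ elements, so its class $\gamma_m$ is $\sum_{S\subseteq E'}\OI_{E'}(S,T')x_S$ with $|T'|=n-m$. For the first identity I take $m=\ell-1$, so $|T'|=n-\ell+1=n+1-\ell$, matching the size needed for $\gamma_\ell$ on $M$; I choose $T\subseteq E'$ (so $i\notin T$) of that size and set $T'=T$. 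For the second identity I take $m=\ell$, so $|T'|=n-\ell$, choose $T\ni i$ with $|T|=n+1-\ell$, and set $T'=T\setminus\{i\}$.

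The computational heart is an elementary telescoping of the $\max$-terms defining $\OI$. Expanding $\theta_i(\gamma_{\ell-1})=\sum_{S\subseteq E'}\OI_{E'}(S,T)(x_S+x_{S\cup i})$, I compare coefficients of $x_S$ in $\gamma_\ell-\theta_i(\gamma_{\ell-1})$, splitting on whether $i\in S$. For the first identity ($i\notin T$), when $i\notin S$ the coefficient difference is
\begin{align*}
\OI_E(S,T)-\OI_{E'}(S,T)&=\max(0,|S|+|T|-n)-\max(0,|S|+|T|-(n+1))\\
&=\max(0,|S|+1-\ell)-\max(0,|S|-\ell),
\end{align*}
which equals $1$ when $|S|\geq \ell$ and $0$ otherwise; when $i\in S$, writing $S=S_0\cup\{i\}$, the $\gamma_\ell$-coefficient $\OI_E(S,T)$ equals the $\theta_i$-coefficient $\OI_{E'}(S_0,T)$ exactly, since the two $\max$-terms coincide because $i\notin T$, so the difference vanishes. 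This yields the first identity at the level of formal $\OI$-weighted sums. The second identity is the mirror computation with $i\in T$: now the $i\notin S$ coefficients agree, and the $i\in S$ coefficients differ by $1$ precisely when $|S|\leq \ell$.

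The only genuinely nontrivial step, and the one I expect to be the main obstacle, is descending from the identity among formal sums (valid in $A^1(U_{n+1,n+1})$) to $A^1(M)$. I carry out the coefficient comparison in the Boolean setting, where every subset is a flat, and then apply the natural homomorphism $\pi_M\colon A^*(U_{n+1,n+1})\to A^*(M)$, using $\pi_M(\gamma_k^{\mathrm{Bool}})=\gamma_k$ together with the compatibility $\pi_M\circ\theta_i^{\mathrm{Bool}}=\theta_i\circ\pi_{M\setminus i}$ on degree-one classes. Verifying this compatibility is the crux: for $S\subseteq E'$ that is not a flat of $M\setminus i$, one has $\operatorname{cl}_M(S)\cap E'\supsetneq S$, so some $j\in E'\setminus S$ lies in $\operatorname{cl}_M(S)$; then neither $S$ nor $S\cup\{i\}$ is a flat of $M$, whence $\pi_M(x_S)=\pi_M(x_{S\cup i})=0$, matching $\theta_i(\pi_{M\setminus i}(x_S))=\theta_i(0)=0$, while on flats the two composites agree by definition of $\theta_i$. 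With this compatibility established, applying $\pi_M$ to the two Boolean identities and recognizing $\pi_M(x_S)$ as the image $x_S$ (zero on non-flats) delivers the stated identities in $A^1(M)$.
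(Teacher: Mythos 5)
Your proof is correct and takes essentially the same route as the paper's: the computational core (choosing $T$ of the appropriate size and comparing coefficients of $x_S$ according to whether $i\in S$) reproduces exactly the four over-intersection identities on which the paper's proof rests. The only real difference is that you explicitly verify the compatibility $\pi_M\circ\theta_i^{\mathrm{Bool}}=\theta_i\circ\pi_{M\setminus i}$ when descending from the Boolean case, a point the paper absorbs into its standing convention that $x_S$ denotes zero in $A^*(M)$ whenever $S$ is not a flat; your check of that compatibility (via $\operatorname{cl}_M(S)\cap(E\setminus i)\supsetneq S$ killing both $x_S$ and $x_{S\cup i}$) is a correct and worthwhile piece of bookkeeping, but not a different method.
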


\begin{proof}
Without loss of generality, we may suppose $i=n$.
Let $S,T\subseteq E\backslash n$. The following are straightforward verifications.
\begin{align*}
  \OI_{E}(S,T)&=
  \begin{cases}
    \OI_{E\backslash n}(S,T) &\text{if }|S|+|T|-|E\backslash n|\leq 0\\
    \OI_{E\backslash n}(S,T)+1 &\text{if }|S|+|T|-|E\backslash n|\geq 1      
  \end{cases}\\
  \OI_{E}(S\cup\{n\},T)&=\OI_{E\backslash n}(S,T)\\
  \OI_{E}(S,T\cup\{n\})&=\OI_{E\backslash n}(S,T)\\
  \OI_{E}(S\cup\{n\},T\cup\{n\})&=
  \begin{cases}
    \OI_{E\backslash n}(S,T)+1 &\text{if }|S|+|T|-|E\backslash n|\leq -1\\
    \OI_{E\backslash n}(S,T)   &\text{if }|S|+|T|-|E\backslash n|\geq 0      
  \end{cases}
\end{align*}
Pick a set $T\subseteq E\backslash n$ with $|T|=n-\ell$. Then,
\begin{align*}
\theta_n(\gamma_{\ell})&=\sum_{S\subseteq E \backslash n} \OI_{E\backslash n}(S,T)x_S+\sum_{S\subseteq E\backslash n}\OI_{E\backslash n}(S,T)x_{S\cup \{n\}}\\
&=\sum_{S\subseteq E\backslash n} \OI_{E}(S,T)x_S-\sum_{\substack{S\subseteq E\backslash n\\|S|\geq\ell+1}}x_S+\sum_{S\subseteq E\backslash n}\OI_{E}(S\cup\{n\},T)x_{S\cup\{n\}}\\
&=\sum_S\OI_{E}(S,T)x_S-\sum_{\substack{S\not\ni n\\|S|\geq \ell+1}}x_S\\
&=\gamma_{\ell+1}-\sum_{\substack{S\not\ni n\\|S|\geq \ell+1}}x_S,
\end{align*}
which is equivalent to the first identity.
Similarly, 
\begin{align*}
\theta_n(\gamma_{\ell})&=\sum_{S\subseteq E\backslash n} \OI_{E\backslash n}(S,T)x_S+\sum_{S\subseteq E\backslash n}\OI_{E\backslash n}(S,T)x_{S\cup \{n\}}\\
&=\sum_{S\subseteq E\backslash n} \OI_{E}(S,T\cup\{n\})x_S+\sum_{S\subseteq E\backslash n}\OI_{E}(S\cup\{n\},T\cup\{n\})x_{S\cup\{n\}}
-\sum_{\substack{S\subseteq E\backslash n\\|S|\leq\ell-1}}x_{S\cup\{n\}}\\
&=\sum_S\OI_{E}(S,T\cup\{n\})x_S-\sum_{\substack{S\ni n\\|S|\leq \ell}}x_S\\
&=\gamma_{\ell}-\sum_{\substack{S\ni n\\|S|\leq \ell}}x_S,
\end{align*}
giving the second identity.
\end{proof}

\begin{lemma} \label{l:comparewiththeta} We have the following identities:
\begin{enumerate}
    \item for a flat $F$ with $|F|\leq \ell$ and $i\in F$,
\[\theta_i(\gamma_{\ell-1})x_F=\gamma_{\ell}x_F;\]
    \item for a flat $F$ with $|F|\geq \ell$ and $i\not\in F$,
\[\theta_i(\gamma_{\ell})x_F=\gamma_{\ell}x_F;\]
\end{enumerate}
\end{lemma}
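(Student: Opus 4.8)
The plan is to deduce both identities directly from the two formulas of Lemma~\ref{l:twists} by multiplying through by $x_F$ and then checking that the resulting correction sums vanish. The only tools needed are the relation $I$ in $A^*(M)$, which forces $x_Sx_F=0$ whenever $S$ and $F$ are incomparable, together with the convention that $x_S=0$ unless $S$ is a flat; thus in each correction sum only terms indexed by flats $S$ comparable to $F$ can survive, and I would show these are ruled out by the hypotheses on $|F|$ and on membership of $i$.

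For identity~(1), I would start from the first formula in Lemma~\ref{l:twists}, which upon multiplication by $x_F$ reads
\[\gamma_\ell x_F=\theta_i(\gamma_{\ell-1})x_F+\sum_{\substack{S\not\ni i\\|S|\geq\ell}}x_Sx_F,\]
so it suffices to see the sum is zero. A surviving term needs $S$ a flat comparable to $F$. If $S\subseteq F$ then $|S|\leq|F|\leq\ell$, so $|S|\geq\ell$ forces $S=F$, contradicting $i\in F$ but $i\notin S$; if $F\subseteq S$ then $i\in F\subseteq S$, again contradicting $i\notin S$. Hence every term vanishes and (1) holds.

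For identity~(2), I would argue symmetrically from the second formula,
\[\gamma_\ell x_F=\theta_i(\gamma_\ell)x_F+\sum_{\substack{S\ni i\\|S|\leq\ell}}x_Sx_F.\]
Again only flats $S$ comparable to $F$ contribute: if $F\subseteq S$ then $|F|\leq|S|\leq\ell\leq|F|$ forces $S=F$, contradicting $i\in S$ but $i\notin F$; if $S\subseteq F$ then $i\in S\subseteq F$ contradicts $i\notin F$. So the correction sum vanishes and (2) follows.

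The argument is entirely elementary, and there is no substantial obstacle: the whole content is the two-case bookkeeping ($S\subseteq F$ versus $F\subseteq S$) and the observation that each case is incompatible with the stated cardinality and membership hypotheses. The only point worth flagging is to remember at the outset that incomparable monomials already vanish in $A^*(M)$, so that the sums may be taken over flats comparable to $F$ before the case analysis begins.
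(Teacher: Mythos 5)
Your proposal is correct and follows essentially the same route as the paper: both deduce the identities from Lemma~\ref{l:twists} and kill the correction sums $\sum x_S x_F$ using the incomparability relation in $A^*(M)$. The paper simply asserts that every flat appearing in those sums is incomparable with $F$, whereas you spell out the two-case verification ($S\subseteq F$ versus $F\subseteq S$) that justifies this assertion; this is a matter of detail, not of method.
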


\begin{proof}
    For the first identity, Lemma \ref{l:twists} gives
    \[\gamma_\ell-\theta_i(\gamma_{\ell-1})=\sum_{\substack{S\not\ni i\\|S|\geq \ell}} x_S\]
    All flats on the right side must be incomparable with $F$. The proof of the second identity is similar.
\end{proof}

The deletion/contraction relation for contiguous matroidal mixed Eulerian numbers is the following:
\begin{prop} \label{p:contiguousdc} Let $M$ be a loopless matroid of rank at least $3$. Let $0\leq s\leq r$, and let $v=(v_1,\dots,v_{r-s})\in \N^{r-s}$ be a contiguous sorted vector. Let $i\in E$, and set $p=|\overline{\{i\}}|$. Suppose $s=0$ or $v_1=1$.
If  $i$ is not a coloop of $M$, then
\[C_{v,s}(M)=
C_{v,s}(M\backslash i)+\sum_{k=1}^{r-s} C_{R(v,k)+f_{k-1}-p\mathbf{1},s}(M_{\overline{\{i\}}}) 
\]
If $i$ is a coloop of $M$, then 
\[C_{v,s}(M)=C_{v,s-1}(M\backslash i)+\sum_{k=1}^{r-s} C_{R(v,k)+f_{k-1}-\mathbf{1},s}(M_{\overline{\{i\}}}).
\]
\end{prop}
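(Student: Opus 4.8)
The plan is to prove both cases simultaneously by a telescoping expansion of the product $\gamma_v\gamma_n^s=\gamma_{v_1}\cdots\gamma_{v_{r-s}}\gamma_n^s$, exploiting the pullback $\theta_i$ and its compatibility with the degree map. For each factor I record a decomposition into a $\theta_i$-image plus a correction: for a factor $\gamma_{v_t}$ I use the second identity of Lemma~\ref{l:twists},
\[\gamma_{v_t}=\theta_i(\gamma_{v_t})+\epsilon_t,\qquad \epsilon_t=\sum_{F\ni i,\,|F|\le v_t}x_F,\]
and for each factor $\gamma_n$ I use the first identity with $\ell=n$,
\[\gamma_n=\theta_i(\gamma_{n-1})+\sum_{S\not\ni i,\,|S|\ge n}x_S=\theta_i(\gamma_{n-1})+x_H,\qquad H=E\setminus i.\]
Here $x_H=0$ precisely when $i$ is not a coloop (then $\overline H=E$ is not a proper flat), while $x_H\neq0$ when $i$ is a coloop (then $H$ is a hyperplane). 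Writing $a_k$ for the factors of $\gamma_v\gamma_n^s$ and $b_k=\theta_i(\,\cdot\,)$ for the corresponding $\theta_i$-images, the telescoping identity $\prod_k a_k=\prod_k b_k+\sum_t(a_t-b_t)\bigl(\prod_{k<t}b_k\bigr)\bigl(\prod_{k>t}a_k\bigr)$ together with $\deg_M\circ\theta_i=\deg_{M\setminus i}$ (resp.\ $=0$ for a coloop) gives
\[C_{v,s}(M)=\deg_M\Big(\theta_i\big(\gamma_{v_1}\cdots\gamma_{v_{r-s}}\gamma_{n-1}^s\big)\Big)+\sum_t\deg_M\big((a_t-b_t)\,\Phi_t\big),\]
where $\Phi_t$ collects the $\theta_i$-images to the left of position $t$ and the original classes to the right. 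The leading term equals $C_{v,s}(M\setminus i)$ when $i$ is not a coloop, and vanishes when $i$ is a coloop (its argument lies in $A^r(M\setminus i)=0$, as $M\setminus i$ then has rank $r$).

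I order the factors so that the $\gamma_{v_t}$ are processed first and the $s$ copies of $\gamma_n$ last. The correction at $\gamma_{v_t}$ contributes $\deg_M(\epsilon_t\Phi_t)$ with $\Phi_t=\theta_i(\gamma_{v_1})\cdots\theta_i(\gamma_{v_{t-1}})\,\gamma_{v_{t+1}}\cdots\gamma_{v_{r-s}}\gamma_n^s$. Expanding $\epsilon_t$ and applying the flag isomorphism $\psi_{\{F\}}$ of Lemma~\ref{l:productwithflag}, I will show only the rank-one flat $F=P=\overline{\{i\}}$ contributes. For that flat $M^P$ has rank one and $M_P$ has rank $r$; in a surviving configuration every $v_j\ (j<t)$ satisfies $v_j\ge p=|P|$, so Lemma~\ref{l:comparewiththeta}(1) rewrites $\theta_i(\gamma_{v_j})x_P=\gamma_{v_j+1}x_P$, and Lemma~\ref{l:productwithflag} shifts each surviving class down by $p$. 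The term becomes $\deg_{M_P}\!\big(\gamma_{v_1+1-p}\cdots\gamma_{v_{t-1}+1-p}\,\gamma_{v_{t+1}-p}\cdots\gamma_{v_{r-s}-p}\,\gamma_{n-p}^s\big)=C_{R(v,t)+f_{t-1}-p\mathbf1,\,s}(M_P)$, the $k=t$ summand of the asserted contraction sum; the $+f_{t-1}$ shift is exactly the passage $\theta_i(\gamma_{v_j})\mapsto\gamma_{v_j+1}$. When $v_1<p$ the factor $\theta_i(\gamma_{v_1})$ maps into $A^1(M^P)=0$, killing the term, and simultaneously the component $v_1+1-p\le0$ (using contiguity, which forces $v_2\le p$) makes $C_{R(v,t)+f_{t-1}-p\mathbf1,s}(M_P)$ vanish by the positive-component convention, so the two sides agree.

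The main obstacle is the vanishing $\deg_M(x_F\Phi_t)=0$ for every flat $F\ni i$ of rank $\rho\ge2$. Applying $\psi_{\{F\}}$ factorwise, each $\gamma_{v_m}\ (m>t)$ and each $\gamma_n$ has index $\ge|F|$ and so maps into the $M_F$-factor, while the first identity of Lemma~\ref{l:twists} applied inside $M^F$ yields the dichotomy $\psi_{\{F\}}(\theta_i(\gamma_{v_j}))=\theta_i^{M^F}(\gamma_{v_j})\otimes1$ if $v_j<|F|$ and $=1\otimes\gamma_{v_j+1-|F|}$ if $v_j\ge|F|$ (the error sum is empty in the latter case). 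Hence the $A^*(M^F)$-component of $\psi_{\{F\}}(\Phi_t)$ is $\theta_i^{M^F}\big(\prod_{j<t,\,v_j<|F|}\gamma_{v_j}\big)$, lying in degree $q=\#\{j<t:v_j<|F|\}$. If $|F|\le v_1$ then $q=0\neq\rho-1$ and the term dies for degree reasons; otherwise $|F|>v_1$, and \emph{contiguity} of $v$ forces $|F|-1\in\Supp(v)$ with all its occurrences before position $t$, so the factor $\gamma_{|F|-1}^{M^F\setminus i}$ appears, which is zero because $M^F\setminus i$ has ground set $F\setminus i$ of size $|F|-1$. This is the step that genuinely requires contiguity rather than mere flat-contiguity, since it needs $|F|-1$ itself to be a value of $v$.

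Finally I treat the coloop case, where $x_H\neq0$ produces corrections from the $\gamma_n$ factors. With the chosen order, the correction at the $u$-th copy of $\gamma_n$ is $\deg_M\big(x_H\,\theta_i(\cdots)\,\gamma_n^{\,s-u}\big)$; since $|H|=n$, Corollary~\ref{c:annihilateflat} gives $x_H\gamma_n=0$, so every term with $u<s$ vanishes and only $u=s$ survives. For that term $\psi_{\{H\}}$ identifies $M^H=M\setminus i$ and $M_H$ of rank one, and Lemma~\ref{l:comparewiththeta}(2) (valid as $i\notin H$ and $|H|=n$ exceeds every index) converts each $\theta_i$-image back to the corresponding class of $M\setminus i$, yielding $\deg_{M\setminus i}\!\big(\gamma_{v_1}\cdots\gamma_{v_{r-s}}\gamma_{n-1}^{\,s-1}\big)=C_{v,s-1}(M\setminus i)$; here $p=1$ and $M_{\overline{\{i\}}}=M\setminus i$, matching the stated contraction terms. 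Assembling the leading term, the $\epsilon_t$-corrections, and this single surviving $x_H$-correction produces the two formulas. The hypotheses that $M$ have rank at least $3$ and that $s=0$ or $v_1=1$ serve to keep the relevant restrictions and contractions loopless and the reduced vectors contiguous for recursive use, and to fix the boundary reading of $C_{v,s-1}$ (as $0$) when $s=0$ in the coloop case.
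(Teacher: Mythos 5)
Your proof is correct, and its skeleton matches the paper's own: expand each $\gamma_{v_t}$ as $\theta_i(\gamma_{v_t})+\sum_{F\ni i,\,|F|\le v_t}x_F$ and each $\gamma_n$ as $\theta_i(\gamma_{n-1})+x_{E\setminus i}$ via Lemma~\ref{l:twists}, telescope, apply $\deg_M\circ\theta_i=\deg_{M\setminus i}$ (or $=0$ for a coloop) to the leading term, identify the rank-one corrections with the contraction summands via Lemmas~\ref{l:comparewiththeta} and~\ref{l:productwithflag} and Corollary~\ref{c:annihilateflat}, and extract $C_{v,s-1}(M\setminus i)$ from the unique surviving $x_{E\setminus i}$ term in the coloop case. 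The genuine divergence is in the central vanishing claim that flats $F\ni i$ of rank $\rho\ge 2$ contribute nothing. When $v_1<|F|$ the two arguments are equivalent: both locate $j<t$ with $v_j=|F|-1$ using contiguity and sortedness, and your observation that $\gamma_{|F|-1}=0$ in $A^1(M^F\setminus i)$ is the same fact the paper phrases as $\theta_i(\gamma_{|F|-1})x_F=\gamma_{|F|}x_F=0$. When $|F|\le v_1$, however, the paper invokes the hypothesis ``$s=0$ or $v_1=1$'' to force $s=0$, rewrites the $\theta_i$-factors, and then appeals to Lemma~\ref{l:contiguousvanishing} plus a separate dimension count for rank-$r$ flats, whereas you argue directly that $\psi_{\{F\}}$ puts $x_F\Phi_t$ (in your notation) in bidegree $(0,r-1)$ of $A^*(M^F)\otimes A^*(M_F)$, while the degree map is supported in bidegree $(\rho-1,r-\rho)$ with $\rho-1\ge 1$. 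Your bidegree count is cleaner and buys something: it avoids Lemma~\ref{l:contiguousvanishing} entirely, and since the $|F|\le v_1$ case is the only place the paper uses ``$s=0$ or $v_1=1$,'' your argument establishes the recursion without that hypothesis. The remaining bookkeeping in your write-up --- matching the non-positive-component convention on both sides (including the $k=1$ summand when $v_1<p$, which is indeed where contiguity forces $v_2\le p$), and reading $C_{v,-1}(M\setminus i)$ as $0$ when $s=0$ in the coloop formula --- is accurate.
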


\begin{proof}
    We will rewrite $\deg_M(\gamma_v\gamma_n^s)$ applying the second formula in Lemma~\ref{l:twists} to $\gamma_{v_i}$ and the first formula to $\gamma_n^s$. 
    Observe that for $1\leq j\leq r-s$,
    \begin{align*} 
    \deg_M(\theta_i(\gamma_{v_1}\dots\gamma_{v_{j-1}})\gamma_{v_j}\dots\gamma_{v_{r-s}}\gamma_n^s)&=    \deg_M(\theta_i(\gamma_{v_1}\dots\gamma_{v_j})\gamma_{v_{j+1}}\dots\gamma_{v_{r-s}}\gamma_n^s)\\
    &+\sum_{\substack{F\ni i\\|F|\leq v_j}} \deg_M(x_F \theta_i(\gamma_{v_1}\dots\gamma_{v_{j-1}})\gamma_{v_{j+1}}\dots\gamma_{v_{r-s}}\gamma_n^s).
    \end{align*}
    We claim that only the summands with $\rk(F)=1$ contribute. Suppose $\rk(F)\geq 2$. 
    Let $0\leq k\leq j$ be the largest index such that $v_{k}<|F|$. If $k\geq 1$, there is $\ell$ with $k\leq\ell\leq j-1$ with $v_{\ell}=|F|-1$, and $\theta_i(\gamma_{v_\ell})x_F=\gamma_{|F|}x_F=0$ by Corollary~\ref{c:annihilateflat}. Otherwise, $k=0$ and we must be in the case $v_1>1$, so $s=0$. Now,
    \[\deg_M(x_F \theta_i(\gamma_{v_1}\dots\gamma_{v_{j-1}})\gamma_{v_{j+1}}\dots\gamma_{v_r})=
    \deg_M(x_F \gamma_{{v_1}+1}\dots\gamma_{v_{j-1}+1}\gamma_{v_{j+1}}\dots\gamma_{v_r}).
    \]
    Because this product is contiguous, by Lemma~\ref{l:contiguousvanishing}, the degree vanishes unless $\rk(F)=1$ or $\rk(F)=r$. If $\rk(F)=r$, the degree vanishes by Lemma~\ref{l:productwithflag}. Indeed. the degree is equal to 
    \[\deg_{M^F}(1)\deg_{M_F}(\gamma_{v_1+1}\dots\gamma_{v_{j-1}+1}\gamma_{v_{j+1}}\dots\gamma_{v_r})\]
    which vanishes for dimensional reasons, since $M_F$ is a rank $1$ matroid.
    The condition $\rk(F)=1$ and $i\in F$ forces $F=\overline{\{i\}}$. Thus, the sum is $0$ unless $v_1\geq p=|F|$ in which case it equals
    \begin{align*}
    \deg_M(x_F \gamma_{v_1+1}\dots\gamma_{v_{j-1}+1}\gamma_{v_{j+1}}\dots\gamma_{v_{r-s}}\gamma_n^s)
    &= \deg_{M_F}(\gamma_{v_1+1-p}\dots\gamma_{v_{j-1}+1-p}\gamma_{v_{j+1}-p}\dots\gamma_{v_{r-s}-p}\gamma_{n-p}^s)\\
    &= C_{(v_1+1-p,\dots,v_{j-1}+1-p,v_{j+1}-p,\dots,v_{r-s}-p),s}(M_F)
    \end{align*}
    by applying Lemma~\ref{l:comparewiththeta}.
    By combining the above identities for varying $j$, we obtain
    \[\deg_M(\gamma_{v_1}\dots\gamma_{v_{r-s}}\gamma_n^s)=
    \deg_M(\theta_i(\gamma_{v_1}\dots\gamma_{v_{r-s}})\gamma_n^s)+\sum_{k=1}^{r-s}C_{R(v,k)+f_{k-1}-p\mathbf{1},s}(M_F).\]
    If $s=0$, the conclusion follows. Otherwise, consider the case when 
    $i$ is not a coloop. Then $E\setminus i$ is not a flat and we have 
    \[\gamma_n^s=(\theta_i(\gamma_{n-1})+x_{E\setminus i})^s=\theta_i(\gamma_{n-1}^s).\]
    Again, take degrees. On the other hand, if $i$ is a coloop, then $p=1$, and
    \[\gamma_n=\theta_i(\gamma_{n-1})+x_{E\backslash i}\]
    where $E\backslash i$ is a flat of size $n$. We note that
    \begin{enumerate}
        \item $\gamma_n x_{E\backslash i}=0$ by Lemma~\ref{l:productwithflag}, and
        \item \label{i:thetaeq}  $\theta_i(\gamma_{\ell})x_{E\backslash i}=\gamma_{\ell} x_{E\backslash i}$ by Lemma~\ref{l:comparewiththeta}.  
    \end{enumerate}
    Consequently,
    \begin{align*}
    \deg_M(\theta_i(\gamma_v)\gamma_n^s)&=\deg_M(\theta_i(\gamma_v\gamma_{n-1}^s))+\deg_M(\theta_i(\gamma_v\gamma_{n-1}^{s-1})x_{E\backslash i})\\
    &=\deg_{M\backslash i}(\gamma_v\gamma_{n-1}^{s-1})
   \end{align*}
    where we used $\deg_M\circ\theta_i=0$, \eqref{i:thetaeq}, and Lemma~\ref{l:productwithflag}.
\end{proof}

\section{The Characteristic and Tutte Polynomials} \label{s:tutte}

We relate the characteristic and Tutte polynomial (see, for example, \cite{BO:Tutte}) to the matroidal mixed Eulerian numbers, reproving results of \cite{HuhKatz,BST}. 
We begin with the reduced characteristic polynomial,
\[\overline{\chi}_M(\lambda)=\chi_M(\lambda)/(\lambda-1)\]
where $\chi_M(\lambda)$ is the usual characteristic polynomial. 
We express its coefficients as
  \[\overline{\chi}_M(\lambda)=\sum_{k=0}^r(-1)^k\mu^k(M)\lambda^{r-k}.\]
We can specialize the definition of the reduced characteristic polynomial to loopless matroids to obtain the following characterization by deletion/contraction:
\begin{enumerate}
    \item $\overline{\chi}_{U_{1,1}}(q)=1$,
    \item if $i$ is not a coloop of $M$, then
    \[\overline{\chi}_M(\lambda)=
    \begin{cases}
     \overline{\chi}_{M\backslash i}(\lambda)-\overline{\chi}_{M/i}(\lambda) &\text{if $\{i\}$ is a flat}\\
     \overline{\chi}_{M\backslash i}(\lambda) &\text{otherwise},\\
    \end{cases}\]
    \item and if $i$ is a coloop of $M$, then 
    \[\overline{\chi}_M(\lambda)=(\lambda-1)\overline{\chi}_{M\backslash i}(\lambda).\]    
\end{enumerate}

The following was established in \cite{HuhKatz}, and we provide an alternative proof by deletion-contraction here.
\begin{prop} \label{p:charpoly} For an integer $k$ with $0\leq k\leq r$,
\[\mu^k(M)=\deg_M(\gamma_1^k\gamma_n^{r-k}).\]
\end{prop}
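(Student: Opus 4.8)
The plan is to prove the identity $\mu^k(M) = \deg_M(\gamma_1^k \gamma_n^{r-k})$ by induction on the rank $r+1$, using the deletion/contraction characterization of the reduced characteristic polynomial recalled just above the statement together with the deletion/contraction relation for contiguous matroidal mixed Eulerian numbers (Proposition~\ref{p:contiguousdc}). The key observation is that the coefficient $\mu^k(M)$ satisfies exactly the recursion displayed in items (1)--(3) of the characterization, so it suffices to check that $\deg_M(\gamma_1^k\gamma_n^{r-k})$ obeys the same base case and the same three recursive rules.

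For the base case, when $M = U_{1,1}$ has rank $1$, both sides should reduce to the normalization $\overline{\chi}_{U_{1,1}}(q)=1$; here $r=0$, the only allowed $k$ is $0$, and $\deg_M$ of the empty product is $1$, matching $\mu^0(M)=1$. The inductive step is the substance. I would apply Proposition~\ref{p:contiguousdc} to the vector $v = (1^k, n^{r-k})$ — more precisely to the contiguous piece $\gamma_1^k$ with $\gamma_n^{r-k}$ playing the role of the truncation term $\gamma_n^s$ with $s = r-k$ — choosing the element $i$ appropriately. One must separate cases according to whether $\{i\}$ is a flat (a rank-$1$ flat of size $p=1$), whether $i$ lies in a larger rank-$1$ flat, and whether $i$ is a coloop. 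In each case, the deletion term $C_{v,s}(M\setminus i)$ should reproduce $\deg_{M\setminus i}(\gamma_1^k\gamma_n^{r-k})$ (matching $\overline{\chi}_{M\setminus i}$), and the contraction sum $\sum_k C_{R(v,k)+f_{k-1}-p\mathbf{1},s}(M_{\overline{\{i\}}})$ should collapse, after accounting for vanishing of nonpositive components, to either $-\deg_{M/i}(\gamma_1^{k-1}\gamma_{n}^{r-k})$-type contributions (matching $-\overline{\chi}_{M/i}$) or to nothing.

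The main obstacle will be matching the combinatorics of the contraction term to the three-case characteristic polynomial recursion, including the sign. The reduced characteristic polynomial recursion has a subtraction $\overline{\chi}_{M\setminus i} - \overline{\chi}_{M/i}$ precisely when $\{i\}$ is a flat, whereas the deletion/contraction relation of Proposition~\ref{p:contiguousdc} produces an additive contraction sum; the apparent sign discrepancy must be reconciled by a careful analysis of which summands in $\sum_{k=1}^{r-s} C_{R(v,k)+f_{k-1}-p\mathbf{1},s}(M_{\overline{\{i\}}})$ survive and how the convention $C_{v,s}(M)=0$ for nonpositive components kills most terms. Concretely, for $v=(1^k,n^{r-k})$ one expects that shifting by $-p\mathbf{1}$ (with $p=1$ when $\{i\}$ is a flat) turns the leading $\gamma_1$ factors into $\gamma_0$, forcing vanishing except for a single surviving contraction term, and the inductive hypothesis applied to $M_{\overline{\{i\}}} = M/i$ then identifies this term with $\mu^{k-1}(M/i)$ or $\mu^k(M/i)$ as needed.

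A cleaner alternative, which I would pursue in parallel as a sanity check, is to avoid the full strength of Proposition~\ref{p:contiguousdc} and instead use the Remark identifying $\gamma_1 = \beta$ and $\gamma_n = \alpha$ in the notation of \cite{AHK}, so that $\deg_M(\gamma_1^k\gamma_n^{r-k}) = \deg_M(\alpha^{r-k}\beta^k)$; the statement $\mu^k(M) = \deg_M(\alpha^k\beta^{r-k})$ is then literally the Huh--Katz formula and the indexing can be verified directly. Since the point of this proposition is to give a \emph{new} deletion/contraction proof, I would present the inductive argument as the main proof and relegate the $\alpha$--$\beta$ identification to a confirming remark.
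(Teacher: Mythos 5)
Your strategy---induction via deletion/contraction, matching the recursive characterization of $\overline{\chi}_M$ against a deletion/contraction identity for $\deg_M(\gamma_1^k\gamma_n^{r-k})$---is exactly the strategy of the paper's proof. The difference is that the paper does not route through Proposition~\ref{p:contiguousdc}; it performs the Chow-ring manipulation directly from Lemma~\ref{l:twists}, Corollary~\ref{c:annihilateflat}, Lemma~\ref{l:comparewiththeta} and Lemma~\ref{l:productwithflag}, expanding $\gamma_1^k=\theta_i(\gamma_1^k)+x_{\{i\}}\theta_i(\gamma_1^{k-1})$ and $\gamma_n^{r-k}=\theta_i(\gamma_{n-1}^{r-k})+x_{E\setminus i}\theta_i(\gamma_{n-1}^{r-k-1})$. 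Also, the sign issue you single out as the main obstacle is routine: since $\overline{\chi}_M(\lambda)=\sum_k(-1)^k\mu^k(M)\lambda^{r-k}$ and $M/i$ has rank one lower than $M$, all three cases of the characteristic-polynomial recursion amount to the single identity $\mu^k(M)=\mu^k(M\setminus i)+\mu^{k-1}(M/i)$, where $\mu^{k-1}(M/i)=0$ when $\{i\}$ is not a flat (then $M/i$ has loops) and $M/i=M\setminus i$ when $i$ is a coloop; the contraction term enters with a plus sign, exactly as in Proposition~\ref{p:contiguousdc}, so there is nothing to reconcile.

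The genuine gap is the case $k=1$ with $p=|\overline{\{i\}}|\geq 2$, where your plan asserts the contraction sum ``collapses to nothing.'' Applying Proposition~\ref{p:contiguousdc} literally with $v=(1)$ and $s=r-1$, the contraction sum consists of the single term indexed by $R(v,1)+f_0-p\mathbf{1}$, which is the \emph{empty} vector; the stated convention kills vectors having a nonpositive component, but the empty vector has no components, so this term survives and equals $\deg_{M_{\overline{\{i\}}}}\bigl(\gamma_{n-p}^{r-1}\bigr)=1$. The literal formula then yields $\deg_M(\gamma_1\gamma_n^{r-1})=\deg_{M\setminus i}(\gamma_1\gamma_{n-1}^{r-1})+1=\mu^1(M\setminus i)+1$, contradicting $\mu^1(M)=\mu^1(M\setminus i)$. (Concrete check: for $M$ of rank $3$ on $\{0,1,2,3\}$ with $0$ and $1$ parallel, $\deg_M(\gamma_1\gamma_3)=2$, while the right-hand side gives $3$.) You cannot evade this by ``choosing $i$ appropriately'': there exist matroids with no coloops and no singleton flats (double every element of $U_{3,3}$), so for $k=1$ you are forced into this case. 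The repair is to note---as the \emph{proof} of Proposition~\ref{p:contiguousdc} shows, but its statement fails to encode precisely when $r-s=1$---that the contraction term for index $j$ is present only when $v_j\geq p$; making that correction amounts to redoing the direct computation, i.e.\ to carrying out the paper's own argument. Two smaller gaps of the same kind: Proposition~\ref{p:contiguousdc} is stated only for matroids of rank at least $3$, so your induction needs rank-$1$ and rank-$2$ base cases, and the case $k=0$ (empty $v$) is not covered by the proposition's hypotheses and needs separate treatment.
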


\begin{proof}
    For $U_{1,1}$, this is trivial. For the general case,
    by repeatedly applying 
    \[    \gamma_1=\theta_i(\gamma_1)+x_{\{i\}},\ \gamma_1x_{\{i\}}=0\]
    from Lemma~\ref{l:twists} and Corollary~\ref{c:annihilateflat}, we obtain
    \[\gamma_1^k=\theta_i(\gamma_1^k)+x_{\{i\}}\theta_i(\gamma_1^{k-1}).\]
    Similarly, we obtain
    \[\gamma_n^{r-k}=\theta_i(\gamma_{n-1}^{r-k})+x_{E\backslash i}\theta_i(\gamma_{n-1}^{r-k-1}).\]
    Here, we treat $x_{\{i\}}$ and $x_{E\backslash i}$ as zero if the subscript is not a flat.
    Thus,
    \begin{align*}
        \gamma_1^k\gamma_n^{r-k}&=\theta_i(\gamma_1^k\gamma_{n-1}^{r-k})+x_{\{i\}}\theta_i(\gamma_1^{k-1}\gamma_{n-1}^{r-k})
        +x_{E\backslash i}\theta_i(\gamma_1^k\gamma_{n-1}^{r-k-1})\\
        &=\theta_i(\gamma_1^k\gamma_{n-1}^{r-k})+x_{\{i\}}(\gamma_2^{k-1}\gamma_{n}^{r-k})
        +x_{E\backslash i}(\gamma_1^k\gamma_{n-1}^{r-k-1})
    \end{align*}
    where we made use of the incomparability relation in the matroid Chow ring and Lemma~\ref{l:comparewiththeta}.
    We can, thus, rewrite $\deg_M(\gamma_1^k \gamma_n^{r-k})$ as
    \begin{multline*}
    \deg_{M\backslash i}(\gamma_1^k\gamma_{n-1}^{r-k})+\deg_{M_{\{i\}}}(\gamma_1^{k-1}\gamma_{n-1}^{r-k})
        +\deg_{M \backslash i}(\gamma_1^k\gamma_{n-1}^{r-k-1})\\
    =\begin{cases} \deg_{M\backslash i}(\gamma_1^k\gamma_{n-1}^{r-k})+\deg_{M_{\{i\}}}(\gamma_1^{k-1}\gamma_{n-1}^{r-k}) &\text{if $i$ is not a coloop}\\
    \deg_{M_{\{i\}}}(\gamma_1^{k-1}\gamma_{n-1}^{r-k})
        +\deg_{M \backslash i}(\gamma_1^k\gamma_{n-1}^{r-k-1}) 
    &\text{if $i$ is a coloop}    
    \end{cases}.
    \end{multline*}
    In any case, this is equal to $\mu^k(M\backslash i)+\mu^{k-1}(M/i)$ which, in turn, equals $\mu^k(M)$.
\end{proof}

We can specialize the definition of the Tutte polynomial $T_M(x,y)$ to loopless matroids $M$ and obtain the following characterization:
the Tutte polynomial $T_M(x,y)\in\Z[x,y]$ of a loopless matroid $M$ is a polynomial characterized by the following properties:
\begin{enumerate}
    \item $T_{U_{1,1}}(x,y)=x$,
    \item if $i$ is not a coloop of $M$, 
    \[T_M(x,y)=T_{M\setminus i}(x,y)+y^{p-1}T_{M_{\overline{\{i\}}}}(x,y),\]
    where $p=|\overline{\{i\}}|$, and
    \item if $i$ is a coloop of $M$, then 
    \[T_M(x,y)=xT_{M\setminus i}(x,y).\]
\end{enumerate}

The reduced characteristic polynomial is related to the Tutte polynomial by
\[\overline{\chi}_M(\lambda)=(-1)^{r+1}T_M(1-\lambda,0)/(\lambda-1)\]
We have the following, which was first proven as \cite[Theorem~1.5]{BST}.

\begin{prop} \label{p:contiguoustutte}
For $v=(v_1,\dots,v_{r})\in \N^{r}$, let
\[C_{v}(M,y)=\sum_{k=0}^\infty C_{v+k\mathbf{1},0}(M)y^k.\]
  If $v$ is contiguous and sorted with $v_1=1$, then
  \[C_{v}(M,y)=T_M(1,y)C_{v}(U_{r+1,r+1},y)\]
\end{prop}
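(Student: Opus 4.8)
The plan is to induct on the number of elements $n+1$ of the ground set, comparing the deletion/contraction behavior of $C_v(M,y)$ with the recursion characterizing $T_M(1,y)$. Abbreviate $U=U_{r+1,r+1}$. The base cases are the matroids of rank at most two. If $M$ is Boolean then $T_M(1,y)=1$ and both sides agree tautologically; the remaining rank-two case forces $v=(1)$, and there one checks directly that $C_{(1)}(M,y)=\sum_{\ell\ge 1}\deg_M(\gamma_\ell)y^{\ell-1}$ equals $T_M(1,y)$ using $\deg_M(\gamma_\ell)=\sum_{\rk F=1}\mult_E(|F|,\ell)$ together with $C_{(1)}(U_{2,2},y)=1$.

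For the inductive step assume $\rk(M)=r+1\ge 3$ and that $M$ is not Boolean, so $M$ has a non-coloop $i$; put $p=|\overline{\{i\}}|$. Both $M\setminus i$ (rank $r+1$) and $M_{\overline{\{i\}}}$ (rank $r$) are loopless with fewer elements. Apply Proposition~\ref{p:contiguousdc} with $s=0$ to the contiguous sorted vector $v+k\mathbf{1}$ for each $k\ge 0$, multiply by $y^k$, and sum; writing $j$ for the summation index inside the proposition, this gives
\[ C_v(M,y)=C_v(M\setminus i,y)+\sum_{k\ge 0}\sum_{j=1}^{r}C_{R(v+k\mathbf{1},\,j)+f_{j-1}-p\mathbf{1},\,0}\bigl(M_{\overline{\{i\}}}\bigr)\,y^k. \]
By the inductive hypothesis applied to $M\setminus i$ the deletion term is $T_{M\setminus i}(1,y)\,C_v(U,y)$, so everything reduces to understanding the double sum.

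For the contraction piece, note that $R(v+k\mathbf{1},j)+f_{j-1}-p\mathbf{1}=u^{(j)}+k\mathbf{1}$ with $u^{(j)}=R(v,j)+f_{j-1}-p\mathbf{1}$ independent of $k$, and that $R(v,j)+f_{j-1}$ is contiguous (as recorded in the discussion preceding Lemma~\ref{l:contiguousvanishing}) with least entry $m_j\in\{1,2\}$ determined by the multiplicity of $1$ in $v$. Put $\tilde u^{(j)}=R(v,j)+f_{j-1}-(m_j-1)\mathbf{1}$, a sorted contiguous vector of length $r-1$ with least entry $1$. Reindexing the inner generating function by $k''=k+m_j-p-1$ yields
\[ \sum_{k\ge 0}C_{u^{(j)}+k\mathbf{1},\,0}\bigl(M_{\overline{\{i\}}}\bigr)y^k=y^{\,p+1-m_j}\,C_{\tilde u^{(j)}}\bigl(M_{\overline{\{i\}}},y\bigr), \]
the terms with $k''<0$ disappearing because $\tilde u^{(j)}+k''\mathbf{1}$ then has a non-positive entry (and $p+1-m_j\ge 0$). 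The inductive hypothesis for $M_{\overline{\{i\}}}$ rewrites each summand as $y^{\,p+1-m_j}T_{M_{\overline{\{i\}}}}(1,y)\,C_{\tilde u^{(j)}}(U_{r,r},y)$. The same recursion run on $U$ itself supplies the Boolean identity $C_v(U,y)=\sum_{j=1}^{r}y^{\,2-m_j}C_{\tilde u^{(j)}}(U_{r,r},y)$: every element of $U$ is a coloop, so $p=1$, and in the $s=0$ situation the deletion term $C_{v+k\mathbf{1},0}(U\setminus i)$ vanishes for dimensional reasons, being the degree of an $r$-fold product in the rank-$r$ ring $A^*(U_{r,r})$, so the identical reindexing applies. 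Hence the contraction piece equals $y^{p-1}T_{M_{\overline{\{i\}}}}(1,y)\,C_v(U,y)$, and combining with the deletion term through the Tutte relation $T_M(1,y)=T_{M\setminus i}(1,y)+y^{p-1}T_{M_{\overline{\{i\}}}}(1,y)$ gives $C_v(M,y)=T_M(1,y)C_v(U,y)$.

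I expect the main obstacle to be the bookkeeping concentrated in the contraction term: pinning down the constant $m_j$, confirming that the shift by $(m_j-1)\mathbf{1}$ restores a vector eligible for the inductive hypothesis, and verifying that the reindexing of the degree generating function introduces no boundary contributions (all spurious terms are killed by the convention that $C_{v,s}$ vanishes once an entry is non-positive). A secondary subtlety is the coloop computation needed only for the Boolean identity, which rests on the vanishing $\deg_U\circ\theta_i=0$ appearing in the $s=0$ analysis within the proof of Proposition~\ref{p:contiguousdc}.
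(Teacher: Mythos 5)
Your proof is correct and follows essentially the same route as the paper's: a deletion/contraction induction that applies Proposition~\ref{p:contiguousdc} to both $M$ and $U_{r+1,r+1}$, shifts the degree generating function by the appropriate power of $y$ (the paper packages your reindexing as the observation $C_{w-\ell\mathbf{1}}(M,y)=y^{\ell}C_{w}(M,y)$ for contiguous sorted $w$ with $w_1\leq 1$), and concludes via the Tutte recursion $T_M(1,y)=T_{M\setminus i}(1,y)+y^{p-1}T_{M_{\overline{\{i\}}}}(1,y)$. The only substantive differences are that the paper inducts on the number of non-coloops rather than on the ground-set size, and that you explicitly treat the rank-$2$ base case (forced by the rank-at-least-$3$ hypothesis of Proposition~\ref{p:contiguousdc}) which the paper passes over silently---if anything your version is the more careful one; the one small slip, that $\tilde u^{(j)}$ need not actually be sorted (e.g.\ $v=(1,1,1)$, $j=2$), is harmless since $C_{v,s}(M)$ is symmetric in the entries of $v$.
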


\begin{proof}
    Before we begin the proof, we record the following observation: if $w=(w_1,\dots,w_{r-s})$ is contiguous and sorted with $w_1\leq 1$, then 
    \[C_{w-\ell\mathbf{1}}(M,y)=y^{\ell}C_{w}(M,y).\]
    We induct on the number of non-coloops in $M$ by deletion/contraction. If there are no non-coloops, the $M=U_{r+1,r+1}$. In that case $T_M(1,y)=1$, and the result is trivial. Otherwise, let $i\in E$ be a non-coloop of $M$.
    Set $F=\overline{\{i\}}$ and $p=|F|$. Then, by applying Proposition~\ref{p:contiguousdc} to $M$,
    \[ C_{v}(M,y) = C_{v}(M\backslash i,y)+\sum_{k=1}^{r} C_{R(v,k)+f_{k-1}-p\mathbf{1}}(M_F,y). \]
    By induction, we have
    \begin{align*}
        C_{v}(M,y)
        &= T_{M\backslash i}(1,y)C_{v}(U_{r+1,r+1},y)+
        \sum_{k=1}^{r} T_{M_F}(1,y)C_{R(v,k)+f_{k-1}-p\mathbf{1}}(U_{r,r},y)\\
        &= T_{M\backslash i}(1,y)C_{v}(U_{r+1,r+1},y)+
        y^{p-1}T_{M_F}(1,y)\sum_{k=1}^{r} C_{R(v,k)+f_{k-1}-\mathbf{1}}(U_{r,r},y).
    \end{align*}
    Here, the second equality follows from observing that
    \[\min(\Supp(R(v,k)+f_{k-1}-\mathbf{1}))\leq 1\]
    because $v$ is contiguous and sorted with $v_1 = 1$. Now, applying Proposition~\ref{p:contiguousdc} to $U_{r+1,r+1}$ yields
    \begin{align*}
        C_{v}(M,y)
        &= \left(T_{M\backslash i}(1,y)+y^{p-1}T_{M_F}(1,y)\right)C_{v}(U_{r+1,r+1},y)\\
        &= T_M(1,y)C_{v}(U_{r+1,r+1},y).        
    \end{align*}
\end{proof}

By combining the above proposition with Theorem~\ref{t:logconcave}, Berget--Sping--Tseng \cite{BST} were able to resolve a conjecture of Dawson \cite{Dawson}.

\begin{corollary} \label{c:tutte}
  We have the following formulas:
\begin{enumerate}
    \item \cite[Corollary~1.6]{BST} for $v=(1,2,\dots,r)$, we have $C_{v}(M,y)=r!T_M(1,y)$;
    \item $\deg_M(\gamma_1\dots\gamma_r)=r!T_M(1,0)$;
    \item for $v$ contiguous and sorted with $v_1=1$, $C_{v}(M,1)=r!T_M(1,1)$; and
    \item the sum of all contiguous sorted matroidal mixed Eulerian numbers $C_{v,0}(M)$ with $v_1=1$ is \[r!2^{r-1}T_M(1,1).\] 
\end{enumerate}
 
\end{corollary}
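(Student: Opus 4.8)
The plan is to derive all four parts from Proposition~\ref{p:contiguoustutte}, which expresses $C_v(M,y)=T_M(1,y)\,C_v(U_{r+1,r+1},y)$ for any contiguous sorted $v$ with $v_1=1$; everything then reduces to evaluating the universal Boolean factor $C_v(U_{r+1,r+1},y)$. For (1) I take $v=(1,2,\dots,r)$ and note this factor is constant: since $n=r$ for $U_{r+1,r+1}$, in $C_{v+k\mathbf{1},0}(U_{r+1,r+1})=\deg(\gamma_{1+k}\cdots\gamma_{r+k})$ the factor $\gamma_{r+k}$ vanishes once $k\geq 1$, so only $k=0$ survives and $C_{v,0}(U_{r+1,r+1})=\deg(\gamma_1\cdots\gamma_r)=A_{1,\dots,1}(U_{r+1,r+1})=r!$ by Postnikov. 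Hence $C_v(M,y)=r!\,T_M(1,y)$. Part~(2) is then the specialization $y=0$, which extracts the $k=0$ coefficient and gives $\deg_M(\gamma_1\cdots\gamma_r)=C_{v,0}(M)=r!\,T_M(1,0)$.

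The heart of the matter is (3). By Proposition~\ref{p:contiguoustutte} at $y=1$ it suffices to prove the purely Boolean identity $C_v(U_{r+1,r+1},1)=r!$ for every contiguous sorted $v\in\N^r$ with $v_1=1$, which I prove by induction on $r$. If $v=(1,2,\dots,r)$ this is the computation from part~(1). Otherwise some value $v_j$ repeats, and I apply the over-intersection form of the Eulerian relation (Proposition~\ref{p:eulerianrelation}) to each shifted term $C_{v+m\mathbf{1},0}(U_{r+1,r+1})$, splitting off the repeat at position $j$. For $U_{r+1,r+1}$ every rank-$1$ contraction and every rank-$r$ restriction equals $U_{r,r}$, so the contraction/restriction numbers are independent of the chosen flat and factor out; the residual over-intersection coefficients sum to $\sum_{\rk F=1}\OI_E(F,T)=|T|=r+1-v_j-m$ and $\sum_{\rk F=r}\OI_E(F,T)=(r+1)-|T|=v_j+m$. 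Writing $w=R(v,j)$, again contiguous sorted with $w_1=1$, this gives
\[ C_{v+m\mathbf{1},0}(U_{r+1,r+1})=(r+1-v_j-m)\,C_{w+(m-1)\mathbf{1},0}(U_{r,r})+(v_j+m)\,C_{w+m\mathbf{1},0}(U_{r,r}). \]
Summing over $m\geq 0$ and reindexing the first sum by $m\mapsto m-1$ (whose boundary term vanishes because $w_1=1$ forces a non-positive entry), the two coefficients collapse by the identity $(r-v_j-m)+(v_j+m)=r$, whence $C_v(U_{r+1,r+1},1)=r\,C_w(U_{r,r},1)=r\cdot(r-1)!=r!$ by induction. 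This exact cancellation of the $m$- and $v_j$-dependence is the decisive point; the only genuine work is the index bookkeeping and checking the boundary term, and I expect this to be the main obstacle.

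Finally, for (4) I sum the identity of part~(3) over the contiguous sorted vectors $v$ with $v_1=1$. Recording such a $v$ by the multiplicities of the values $1,2,\dots,\max v$ gives a bijection with compositions of $r$, so there are exactly $2^{r-1}$ of them, each contributing $r!\,T_M(1,1)$, for a total of $r!\,2^{r-1}T_M(1,1)$. Expanding $C_v(M,1)=\sum_{k\geq 0}C_{v+k\mathbf{1},0}(M)$ and reindexing by $w=v+k\mathbf{1}$ identifies this total with the sum of $C_{w,0}(M)$ over all contiguous sorted $w$, giving the asserted evaluation.
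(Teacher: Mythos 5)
Your proof is correct, and for parts (1), (2), and (4) it follows essentially the paper's route: reduce to the Boolean factor via Proposition~\ref{p:contiguoustutte}, evaluate $C_{(1,2,\dots,r)}(U_{r+1,r+1},y)=r!$ (the paper cites \cite[Theorem~16.3]{Postnikov:permutohedra} or Lemma~\ref{l:lopsided}; you instead note $\gamma_{r+k}=0$ for $k\geq 1$ and invoke $A_{1,\dots,1}=r!$), specialize at $y=0$, and count contiguous sorted $v$ with $v_1=1$ by compositions of $r$. The genuine difference is in part (3): the paper simply cites \cite[Theorem~16.4]{Postnikov:permutohedra} for the Boolean identity $C_v(U_{r+1,r+1},1)=r!$, whereas you prove it internally by induction on $r$ using Proposition~\ref{p:eulerianrelation}. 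Your induction checks out: for the Boolean matroid flat contiguity coincides with contiguity, so the relation applies to each shift $v+m\mathbf{1}$; every rank-$1$ contraction and every rank-$r$ restriction is $U_{r,r}$, so those factors pull out of the flat sums; the over-intersection coefficients sum to $|T|=r+1-v_j-m$ over rank-$1$ flats and to $(r+1)-|T|=v_j+m$ over rank-$r$ flats; the $m=0$ boundary term vanishes because $w_1=1$ forces a non-positive entry under the paper's convention; and after reindexing the coefficients collapse to $r$, giving $C_v(U_{r+1,r+1},1)=r\,C_w(U_{r,r},1)$ and hence $r!$ by induction. (For $m$ so large that an entry of $v+m\mathbf{1}$ exceeds $n=r$, both sides of your term-by-term identity vanish, so the summation is legitimate for all $m\geq 0$.) What your approach buys is self-containedness: part (3) no longer rests on an external result of Postnikov, only on machinery the paper has already established; the cost is a longer argument. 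One further point in your favor: the statement of part (4), read literally as summing $C_{v,0}(M)$ over contiguous sorted $v$ with $v_1=1$, does not match the quantity the paper actually computes, which is the sum of $C_v(M,1)$ over such $v$; your reindexing by $w=v+k\mathbf{1}$ makes explicit that this total equals the sum of $C_{w,0}(M)$ over \emph{all} contiguous sorted $w$, which is the consistent reading of the claim.
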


\begin{proof}
  By \cite[Theorem~16.3]{Postnikov:permutohedra} (or Lemma~\ref{l:lopsided} applied to $U_{r+1,r+1}$), $C_{v}(U_{r+1,r+1},y)=r!$ for $v = (1,2,\ldots,r)$. The second identity follows by substituting $y=0$.

  For $v$ contiguous and sorted with $v_1=1$, 
  \[C_{v}(M,1)=T_M(1,1)C_{v}(U_{r+1,r+1},1)=r!T_M(1,1)\] 
  where the final equality follows from  \cite[Theorem~16.4]{Postnikov:permutohedra}.
  The final formula comes from summing the above over contiguous sorted $v$ with $v_1=1$, noting that such choices of $v$ are in bijective correspondence with compositions of $r$.
\end{proof}

Proposition~\ref{p:contiguoustutte} allows us to write any contiguous matroidal mixed Eulerian numbers as a convolution of Tutte polynomial coefficients with ordinary mixed Eulerian numbers. Let $v = (v_1,\ldots,v_r)$ be contiguous and sorted, and set $v' = v - (v_1-1)\mathbf{1}$. Then $v'$ has first coordinate $1$ and is also sorted and contiguous. The matroidal mixed Eulerian number $C_{v,0}(M)$ is the coefficient of $y^{v_1-1}$ in $C_{v'}(M,y)$. By Proposition~\ref{p:contiguoustutte}, this evaluates to
  \begin{align*}
  	C_{v,0}(M)
        & = [y^{v_1-1}]C_{v'}(M,y)\\
        & = [y^{v_1-1}](T_M(1,y)C_{v'}(U_{r+1,r+1},y)\\
 &= \sum_{j=0}^{v_1-1} \big( [y^j]T_M(1,y) \big) C_{v' + (v_1-1-j)\mathbf{1},0}(U_{r+1,r+1}) \\
	&= \sum_{j=0}^{v_1-1} \big( [y^j]T_M(1,y) \big) C_{v - j\mathbf{1},0}(U_{r+1,r+1}),
  \end{align*}
where $[y^j]$ denotes taking the coefficient of $y^j$. Note that $C_{v - j\mathbf{1},0}(U_{r+1,r+1}) = 0$ if $j < v_r - r$.

\begin{example}[Uniform matroids]
  When $M = U_{r+1,n+1}$ is a uniform matroid, we have
  \[ T_{U_{r+1,n+1}}(1,y) = \sum_{j=0}^{n-r} \binom{n-j}{r} y^j. \]
  Thus, for any contiguous sorted vector $v$,
  \[ C_{v,0}(U_{r+1,n+1}) = \sum_{j=0}^{v_1-1} \binom{n - j}{r} C_{v - j\mathbf{1},0}(U_{r+1,r+1}). \]
  In particular, the pure powers of the $\gamma_k$ evaluate to
  \[ \deg_{U_{r+1,n+1}}(\gamma_k^r) = \sum_{j=0}^{k-1} \binom{n-j}{r} A(r,k-j-1), \]
  where the $A(r,k-j-1)$ are usual Eulerian numbers.
  For $k \geq r$, we get
  \begin{align*}
  	\deg_{U_{r+1,n+1}}(\gamma_k^r) 
		&= \sum_{j=k-r}^{k-1} \binom{n-j}{r} A(r,k-j-1), \\
		&= \sum_{j=0}^{r-1} \binom{n+1-k+j}{r} A(r,j) \\
		&= (n+1-k)^r
  \end{align*}
  by Worpitzky's identity
  \[ x^r = \sum_{j=0}^r \binom{x+j}{r} A(r,j). \]
\end{example}

\begin{example}[Sparse paving matroids]
  Let $M$ be a sparse paving matroid on $E$ of rank $r+1$ with exactly $m$ circuit-hyperplanes. Then $T_M(1,y) = T_{U_{r+1,n+1}}(1,y) - m$. Consequently, for a contiguous sorted vector $v$,
  \[ C_{v,0}(M) = 
  \begin{cases}
  	C_{v}(U_{r+1,n+1}) - m C_{v}(U_{r+1,r+1}) & \text{if } v_r \leq r \\
	C_{v}(U_{r+1,n+1}) & \text{otherwise.} \\
  \end{cases}
  \] 
\end{example}

\section{Postnikov Trees} \label{s:trees}

In \cite[Section 17]{Postnikov:permutohedra}, Postnikov gave a combinatorial interpretation of mixed Eulerian numbers as weighted counts of certain binary trees. We extend this construction to express an arbitrary product of the $\gamma_i$'s in $A^*(M)$ as a weighted sum of monomials $x_{\cF}$, for $\cF$ a flag of flats of $M$.

Let $T$ be a finite binary tree. 
The {\em binary search order} on the vertices of $T$ is the transitive closure of the relations
\begin{enumerate}
    \item $b\in L_a$ implies $b<a$ and
    \item $b\in R_a$ implies $a<b$,
\end{enumerate}
where $L_a$ and $R_a$ denote the left and right branches, respectively, under $a$. 
Let $\desc(a,T)\coloneqq L_a\cup\{a\}\cup R_a$ be the set of all descendants of $a$.

An \emph{increasing labeling} of the vertex set of $T$ is a bijection
\[\sigma\colon V(T) \to \{1,\dots,k\}\]
such that $\sigma(b)\geq \sigma(a)$ whenever $b\in\desc(a,T)$.
An \emph{increasing binary tree} is a pair $(T,\sigma)$ where $T$ is a binary tree with increasing labeling of $\sigma$. It is well-known that increasing binary trees on the vertex set $\{1, \ldots, k\}$ are in bijection with permutations of $\{1, \ldots, k\}$ \cite[Section 1.5]{Stanley}.

Let $\cL(M)$ denote the lattice of flats of $M$. A {\em flat-filling} of $T$ is a function
\[F\colon V(T)\to \cL(M)\setminus\{\varnothing, E\}\]
such that $a<b$ implies $F(a)\subsetneq F(b)$. Consequently, the image of a flat-filling must be a $k$-step flag of flats $\cF(T,F)$.

A {\em flat-filled increasing binary tree} is a triple $(T,\sigma,F)$, where $(T,\sigma)$ is an increasing binary tree and $F\colon V(T)\to \cL(M)$ 
 is a flat-filling. We will give a necessary {\em compatibility condition} for $(T,\sigma,F)$ to contribute a multiple of $x_{\cF(T,F)}$ in a particular monomial expansion of $\gamma_{v_1}\dots\gamma_{v_k}$ where $k=|V(T)|$. This will allow us to construct a flat-filled increasing binary tree vertex-by-vertex in an order determined by $\sigma$. 
For $i \in \{1,\ldots,k\}$, write $T_{\leq i}$ for the subgraph of $T$ induced by the vertex set $\sigma^{-1}(\{1,\ldots,i\})$. We will also write $F$ for the flat-filling on $T_{\leq i}$ given by restricting $F$ from $T$.
Observe that $T_{\leq i}$ is also a binary tree, and the binary search order on $T_{\leq i}$ is the restriction of the binary search order on $T$. For a vertex $b$, let $\ell(b)$ and $r(b)$ denote $b$'s immediate predecessor and successor, respectively, in the binary search order on $T_{\leq \sigma(b)}$.

For a vector $v = (v_1, \ldots, v_k)$, we define a flat-filled increasing binary tree $(T,\sigma,\cF)$ to be {\em $v$-compatible} if for all $b\in V(T)$
\[ |F(\ell(b))| < v_{\sigma(b)} < |F(r(b))|\]
where if $b$ is the minimal (resp.,~maximal) element in the binary search order on $T$, we set $F(\ell(b))=\varnothing$ (resp.,~$F(r(b))=E$).
In light of Lemma~\ref{l:productwithflag} and the discussion above, this will translate to the condition that $\gamma_{v_{\sigma(b)}}$ could have added the flat $F(b)$ to $\cF(T_{\leq \sigma(b)-1},F)$ to create $\cF(T_{\leq \sigma(b)},F)$.

We define a {\em one-vertex extension} of a flat-filled increasing tree $(T,\sigma,F)$ on $k$ vertices to be a flat-filled increasing tree $(T',\sigma',F')$ on $k+1$ vertices such that $T'_{\leq k} = T$, $\sigma'|_T = \sigma$, and $F'|_{V(T)} = F$.
If the new vertex is called $b$, then $F'(\ell(b))\subsetneq F'(b)\subsetneq F'(r(b))$.

We can choose between two possible natural weights for a $v$-compatible flat-filled increasing binary tree $(T,\sigma,F)$:
\[\wt^{\OI}_v(T,\sigma,F) \coloneqq \prod_{b\in V(T)} \OI_{F(r(b))\setminus F(\ell(b))} (F(b) \setminus F(\ell(b)), U_b)\]
where $U_b$ is the set of the largest $|F(r(b))| - v_{\sigma(b)}$ elements of $F(r(b)) \setminus F(\ell(a))$, or
\[\wt^{\mult}_v(T,\sigma,F) \coloneqq \prod_{b\in V(T)} \mult_{F(r(b))\setminus F(\ell(b))} (F(b) \setminus F(\ell(b)), v_{\sigma(b)}-|F(\ell(a))|).\]
We set the weight of the empty binary tree to be $1$.

\begin{figure}
\begin{tikzpicture}[sibling distance = 30mm, level distance = 10mm, node distance=9mm]

\node[draw,circle,inner sep=1pt,minimum size=2em] (root) {\footnotesize $b_2$} [grow=down]
  child {node[draw,circle,inner sep=1pt,minimum size=2em] (leftchild) {\footnotesize $b_1$}}
  child {node[draw,circle,inner sep=1pt,minimum size=2em] (rightchild) {\footnotesize $b_4$}
	child {node[draw,circle,inner sep=1pt,minimum size=2em] (end) {\footnotesize $b_3$}}
    child {edge from parent[draw = none]}
	}
        ;

\node[above left of=root] {\small $1$};
\node[above left of=leftchild] {\small $3$};
\node[above right of=rightchild] {\small $2$};
\node[right of=end] {\small $4$};


\end{tikzpicture}

\caption{An increasing binary tree. The increasing labeling is given by the numbers next to each vertex.}
\label{f:k=4 example}
\end{figure}

\begin{example}
    Let $M = U_{6,10}$ be the uniform matroid of rank $6$ on ground set $\{0,1,\dots,9\}$. Let $v = (2,3,1,4)$ and consider the increasing binary tree $(T,\sigma)$ on vertex set $\{b_1, b_2, b_3, b_4\}$ pictured in Figure \ref{f:k=4 example}. A flat-filling of $T$ is a a flag
    \[ F(b_1) \subsetneq F(b_2) \subsetneq F(b_3) \subsetneq F(b_4) \]
    of non-empty proper flats. Such a flat-filling is $v$-compatible if and only if $|F(b_2)| = 2$ and $|F(b_4)| = 5$.

    In order for a $v$-compatible flat-filling to have nonzero $\OI$-weight, each of the following conditions must be satisfied:
    \begin{itemize}
        \item $F(b_2) \neq \{0,1\}$.
        \item The minimum element of $E \setminus F(b_2)$ is not contained in $F(b_4)$.
        \item $F(b_1)$ consists of the maximum element of $F(b_2)$.
        \item $F(b_3) \setminus F(b_2)$ must contain the maximum element of $F(b_4) \setminus F(b_2)$.
    \end{itemize}
    The resulting $\OI$-weight will then be equal to $2 - |F(b_2) \cap \{0,1\}|$.

    For instance, the flag
    \[ \{5\} \subsetneq \{3,5\} \subsetneq \{3,5,8\} \subsetneq \{1, 2,3,5,8\} \]
    gives a $v$-compatible flat-filling with $\OI$-weight $2$, whereas the flag
    \[ \{5\} \subsetneq \{0,5\}, \subsetneq \{0,3,5,8\} \subsetneq \{0,2,3,5,8\}\]
    yields a $v$-compatible flat-filling with $\OI$-weight $1$.
\end{example}

\begin{theorem} \label{t:postnikov}
    Let $v = (v_1, \ldots, v_k) \in \N^k$. Then we have the following equality in $A^*(M)$:
    \[ \gamma_{v_1} \cdots \gamma_{v_k} = \sum_{(T,\sigma,F)} \wt_v(T,\sigma,F) \, x_{\cF(T,F)}, \]
    (for either set of weights) where the sum is over all $v$-compatible flat-filled increasing binary trees on $k$ vertices.
\end{theorem}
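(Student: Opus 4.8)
The plan is to prove the expansion by induction on $k = |V(T)|$, building each $v$-compatible tree one vertex at a time in the order prescribed by the increasing labeling $\sigma$. The organizing idea is that multiplying by $\gamma_{v_{k}}$ corresponds exactly to adjoining the $\sigma$-maximal vertex to an increasing binary tree on $k-1$ vertices, and the weight factor it contributes is precisely the local $\OI$ (or $\mult$) coefficient appearing in the definition of $\wt_v$. The last labeled vertex $b$ (with $\sigma(b) = k$) must be a leaf of $T$, and removing it yields a $v'$-compatible tree on $k-1$ vertices where $v' = (v_1, \ldots, v_{k-1})$, with the same flat-filling restricted; conversely, any one-vertex extension that inserts a new leaf $b$ with $|F(\ell(b))| < v_k < |F(r(b))|$ reproduces a $v$-compatible tree. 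This bijective bookkeeping is what converts the algebraic recursion into the tree sum.

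The induction step is where Lemma~\ref{l:productwithflag} does the real work. First I would apply the inductive hypothesis to write
\[
\gamma_{v_1} \cdots \gamma_{v_{k-1}} = \sum_{(T',\sigma',F')} \wt_{v'}(T',\sigma',F') \, x_{\cF(T',F')},
\]
the sum over $v'$-compatible trees on $k-1$ vertices, and then multiply both sides by $\gamma_{v_k}$. For each term $x_{\cF(T',F')}$, with underlying flag $\cG = \{\varnothing \subsetneq G_1 \subsetneq \cdots \subsetneq G_{k-1} \subsetneq E\}$, I would compute $x_{\cG}\,\gamma_{v_k}$ using the computation inside the proof of Lemma~\ref{l:productwithflag}: choosing a suitable $T$, we have $x_{\cG}\gamma_{v_k} = \sum_{G} \OI_{G_{j+1}\setminus G_j}(G\setminus G_j, T')\, x_{\cG}x_{G}$, where $G$ ranges over flats with $G_j \subsetneq G \subsetneq G_{j+1}$ for the unique index $j$ with $|G_j| \le v_k < |G_{j+1}|$ (using the conventions $G_0 = \varnothing$, $G_k = E$). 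Each such new flat $G$ inserted into the flag corresponds exactly to adjoining a new leaf $b$ with $F(b) = G$, $\ell(b)$ and $r(b)$ being the flag-neighbors $G_j, G_{j+1}$, and the coefficient $\OI_{G_{j+1}\setminus G_j}(G\setminus G_j, U_b)$ matching the new factor in $\wt^{\OI}_{v}$. When $v_k = |G_j|$ for some $j$, Corollary~\ref{c:annihilateflat} (equivalently the vanishing case of Lemma~\ref{l:productwithflag}) kills the term, which is exactly the requirement that no $v$-compatible extension exists. The $\mult$-weighted version is identical, invoking instead the second description of $\gamma_{v_k}$ in Lemma~\ref{l:weights}.

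The main obstacle I anticipate is not the algebra but the combinatorial bijection: I must verify that inserting a new flat $G$ between consecutive flats $G_j \subsetneq G_{j+1}$ of the flag, as produced by the product formula, is in canonical correspondence with adjoining a new $\sigma$-maximal leaf to the increasing binary tree, and that this correspondence is \emph{bijective} — no tree is counted twice and none is missed. Concretely, I would argue that the immediate predecessor and successor of the new leaf $b$ in the binary search order on $T_{\le k} = T$ are precisely the vertices filled by $G_j$ and $G_{j+1}$; since a new leaf can be attached as either a left or right child below an existing vertex, and since the binary search order determines $\ell(b)$ and $r(b)$ from the flag position, the choice of insertion slot $j$ together with the choice of $G$ reconstructs $(T,\sigma,F)$ uniquely. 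A clean way to make this rigorous is to invoke the one-vertex-extension language already set up in the excerpt: each extension is determined by the flag neighbors $F(\ell(b)) \subsetneq F(b) \subsetneq F(r(b))$, and the $v$-compatibility condition $|F(\ell(b))| < v_k < |F(r(b))|$ selects exactly the slots where $\gamma_{v_k}$ can insert a flat. I would close by noting that since every $v$-compatible tree on $k$ vertices arises from a unique one-vertex extension of its $\sigma$-maximal-leaf deletion, summing over all extensions of all $(k-1)$-vertex trees recovers the full sum, completing the induction.
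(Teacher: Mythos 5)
Your proposal is correct and takes essentially the same approach as the paper: both proceed by induction on $k$, multiply the inductive expansion of $\gamma_{v_1}\cdots\gamma_{v_{k-1}}$ by the last factor, use Lemma~\ref{l:productwithflag} to expand $x_{\cF}\gamma_{v_k}$ as a sum over flats inserted between the consecutive flats of the flag bracketing $v_k$ (with Corollary~\ref{c:annihilateflat} handling the vanishing case $v_k=|F_j|$), and identify these insertions bijectively with one-vertex extensions of the increasing binary tree by a new $\sigma$-maximal leaf, whose local $\OI$ (or $\mult$) factor is exactly the new weight factor. The only cosmetic difference is the direction of the bookkeeping (the paper adjoins a vertex to pass from $k$ to $k+1$, whereas you delete the maximal-label leaf); the underlying bijection and weight-matching are identical.
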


\begin{proof}
    We will state the proof for the weight function $\wt^{\OI}_v$. The proof using $\wt^{\mult}_v$ is identical.
    The proof is by induction on $k$. For $k=0$, it is trivially true.

    We first claim that for any flat-filled increasing binary tree $(T,\sigma,F)$ with $k$ vertices,
    \[\gamma_{v_{k+1}}x_{\cF(T,F)}=\sum_{(T',\sigma',F')} \OI_{F'(r(b))\setminus F'(\ell(b))} (F'(b) \setminus F'(\ell(b)), U_b)x_{\cF(T',F')}
    \]
    where the sum is over one-vertex extensions of $(T,\sigma,F)$ by a vertex $b$ for which $|F'(\ell(b))|<v_{k+1}<|F'(r(b))|$. 
    Write 
    \[\cF(T,F)=\{\varnothing=F_0\subsetneq F_1\subsetneq F_2\subsetneq \dots\subsetneq F_k\subsetneq E\}.\]
    If $v_{k+1}=|F_j|$ for any $j$, then $\gamma_{v_{k+1}}x_{\cF(T',F')}=0$ and there are no one-vertex extensions $(T',\sigma',F')$ with $|F'(\ell(b))|<v_{k+1}<|F'(r(b))|$.
    Otherwise,  if $|F_j|<v_{k+1}<|F_{j+1}|$, by Lemma~\ref{l:productwithflag}, 
    \[\gamma_vx_{\cF(T,F)}=\sum_{G \colon F_j \subsetneq G \subsetneq F_{j+1}}\OI_{F_{j+1}\setminus F_j} (G \setminus F_j, U)x_Gx_{\cF(T',F')}
    \]
    where $U$ denotes the set of the largest $|F_{j+1}|-v_{k+1}$ elements of $F_{j+1}\setminus F_j$.
    To each $G$ occurring in the above sum, we produce a one-vertex extension of $(T,\sigma,F)$ by adjoining a vertex $b$ such that $F^{-1}(F_j)<b<F^{-1}(F_{j+1})$ in the binary search order and setting $\sigma(b)=k+1$, $F(b)=G$. All one-vertex extensions by $b$ for which $|F'(\ell(b))|<v_{k+1}<|F'(r(b))|$ arise in this fashion.

    Now, we give the inductive step. For $v'=(v_1,\dots,v_{k+1})$, set $v=(v_1,\dots,v_k)$. Write
    \[\gamma_{v_1}\dots\gamma_{v_k}=\sum_{(T,\sigma,F)} \wt^{\OI}_v(T,\sigma,F) \, x_{\cF(T,F)}.
    \]
    Multiply both sides by $\gamma_{v_{k+1}}$. Each choice of $(T,\sigma,F)$ contributes a sum over one-vertex extensions $(T',\sigma',F')$ with $|F'(\ell(b))|<v_{k+1}<|F'(r(b))|$ weighted by an over-intersection term. The condition on the size of flats adjacent to $b$ is exactly $v$-compatibility. Because the product of $\wt^{\OI}_v(T,\sigma,F)$ with the over-intersection term is exactly $\wt^{\OI}_{v'}(T',\sigma',F')$, the conclusion follows.
 
\end{proof}

By taking degrees when $k=r$, we obtain the following:
\begin{corollary} \label{c:postnikov}
    Let $v = (v_1, \ldots, v_r) \in \N^r$. Then,
    \[ C_{v,0}(M) = \sum_{(T,\sigma,F)} \wt_v(T,\sigma,F)\]
    (for either set of weights) where the sum is over all $v$-compatible flat-filled increasing binary trees on $k$ vertices.
\end{corollary}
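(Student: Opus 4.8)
The plan is to read off this corollary directly from Theorem~\ref{t:postnikov} by specializing to $k = r$ and applying the degree map. By definition $C_{v,0}(M) = \deg_M(\gamma_{v_1}\cdots\gamma_{v_r})$, so with $k = r$ the theorem furnishes the identity
\[ \gamma_{v_1}\cdots\gamma_{v_r} = \sum_{(T,\sigma,F)} \wt_v(T,\sigma,F)\, x_{\cF(T,F)} \]
in $A^*(M)$, the sum being over all $v$-compatible flat-filled increasing binary trees on $r$ vertices (for either choice of weight). Applying $\deg_M$ to both sides and using linearity reduces the problem to computing $\deg_M(x_{\cF(T,F)})$ for each individual tree.

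The key observation is that when the tree has exactly $r$ vertices, the monomial $x_{\cF(T,F)}$ is the class of a full flag of flats. Indeed, the binary search order totally orders the $r$ vertices, and a flat-filling satisfies $a < b \implies F(a) \subsetneq F(b)$; hence the image of $F$ is a chain of $r$ distinct proper nonempty flats, i.e.\ a full flag $\varnothing \subsetneq F_1 \subsetneq \cdots \subsetneq F_r \subsetneq E$ in the rank $r+1$ matroid $M$. By the defining property of the degree map recalled in Section~\ref{s:matroids}, $\deg_M(x_{F_1}\cdots x_{F_r}) = 1$ for every such full flag, so $\deg_M(x_{\cF(T,F)}) = 1$ for each $v$-compatible tree. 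Summing termwise then yields
\[ C_{v,0}(M) = \sum_{(T,\sigma,F)} \wt_v(T,\sigma,F), \]
as claimed.

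There is no genuine obstacle here: all of the combinatorial content—the correspondence between monomial contributions and $v$-compatible trees, and the matching of weights—is already packaged into Theorem~\ref{t:postnikov}. The only point requiring care is verifying that each surviving monomial corresponds to a full flag, so that it contributes its weight with coefficient exactly $1$; this is immediate from the length count above together with the Poincar\'{e} duality normalization of $\deg_M$.
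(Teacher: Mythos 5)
Your proposal is correct and follows exactly the paper's (very brief) argument: the paper simply states that the corollary follows ``by taking degrees when $k=r$'' in Theorem~\ref{t:postnikov}, which is precisely your specialization plus the observation that each $x_{\cF(T,F)}$ is a full-flag monomial of degree $1$. Your write-up just makes explicit the step the paper leaves implicit.
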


Postnikov trees give a perspective on the special cases that we have been able to treat in this paper. If $v=(v_1,\dots,v_r)$ is a sorted contiguous vector, by Lemma~\ref{l:contiguousvanishing}, the only binary trees appearing in the above expansion have a path as their underlying tree. It would be interesting to relate such expansions to lattice paths where the two choices of step directions correspond to left and right children.

\begin{figure}
\begin{tikzpicture}[sibling distance = 20mm, level distance = 7mm, node distance=9mm]

\begin{scope}[shift={(-3,0)}]

\node[draw,circle,inner sep=1pt,minimum size=2em] (root) {\footnotesize $b_k$} [grow=down]
  child {node {\reflectbox{$\ddots$}}
        	child {node[draw,circle,inner sep=1pt,minimum size=2em] (child) {\footnotesize $b_2$}
        		child {node[draw,circle,inner sep=1pt,minimum size=2em] (end) {\footnotesize $b_1$}}
		child {edge from parent[draw = none]}
		}
        	child {edge from parent[draw = none]}
        	}
  child {edge from parent[draw = none]}
        ;

\node[above left of=root] {\small $1$};
\node[above left of=child] {\small $k-1$};
\node[above left of=end] {\small $k$};

\node at (-1,-4) [] (a) {(a)};

\end{scope}

\begin{scope}[shift={(3,0)}]

\node[draw,circle,inner sep=1pt,minimum size=2em] (root) {\footnotesize $b_k$} [grow=down]
  child {node {\reflectbox{$\ddots$}}
        	child {node[draw,circle,inner sep=1pt,minimum size=2em] (leftchild) {\footnotesize $b_2$}
        		child {node[draw,circle,inner sep=1pt,minimum size=2em] (leftend) {\footnotesize $b_1$}}
		child {edge from parent[draw = none]}
		}
        	child {edge from parent[draw = none]}
        	}
  child {node[draw,circle,inner sep=1pt,minimum size=2em] (rightchild) {\footnotesize $b_{k+1}$}
  	child {edge from parent[draw = none]}
	child {node {$\ddots$}
		child {edge from parent[draw = none]}
		child {node[draw,circle,inner sep=1pt,minimum size=2em] (rightagain) {\footnotesize $b_{r-1}$}
			child {edge from parent[draw = none]}
			child {node[draw,circle,inner sep=1pt,minimum size=2em] (rightend) {\footnotesize $b_{r}$}}
			}
		}
	}
        ;

\node[above left of=root] {\small $1$};
\node[above left of=leftchild] {\small $k-1$};
\node[above left of=leftend] {\small $k$};
\node[above right of=rightchild] {\small $k+1$};
\node[above right of=rightagain] {\small $r-1$};
\node[above right of=rightend] {\small $r$};

\node at (0.5,-4) [] (b) {(b)};

\end{scope}

\end{tikzpicture}

\caption{Two increasing binary trees. The increasing labeling is given by the numbers next to each vertex. The  unique increasing binary tree compatible with $v = 1^k$ is shown in (a), and the unique increasing binary tree compatible with $v = 1^kn^{r-k}$ is shown in (b).}
\label{f:postnikov}
\end{figure}

\begin{example} \label{ex:descending}
Consider the case of $v = (1,\ldots,1) \in \N^k$, corresponding to the product $\gamma_1^k$. Suppose that $(T,\sigma, F)$ is a $v$-compatible flat-filled increasing binary tree. The $v$-compatibility condition
\[ |F(\ell(b))| < 1 < |F(r(b))|\]
can only be satisfied if $b$ is the minimal vertex in the binary search order on $T_{\leq \sigma(b)}$, so that $F(\ell(b)) = \varnothing$. Suppose this holds for all $b \in V(T)$. Then $\sigma^{-1}(1)$ is the root of $T$, and for $i \in \{2,\ldots, k\}$, $\sigma^{-1}(i)$ is the left child of $\sigma^{-1}(i-1)$. That is, $T$ must be the binary tree which is a path with all edges going to the left and $\sigma$ is the unique increasing labeling of $V(T)$. If we let the vertices of $T$ be $b_1, b_2, \ldots, b_k$ so that $b_1 < b_2 < \cdots < b_k$ in the binary search order, then we have $\sigma(b_i) = k+1-i$. The tree $T$ is pictured in Figure~\ref{f:postnikov}(a).

Now, consider a flat filling $F$ of $(T,\sigma)$. In computing the weight $\wt^{\OI}_v(T,\sigma,F)$, the factor corresponding to $b_k$ is $\OI_{E}(F(b_k),U_{b_k})$, where $U_{b_k} = \{1,\ldots,n\}$ consists of the largest $n$ elements of $E$. This over-intersection is $0$ if $0 \in F(b_k)$ and it is $1$ if $0 \notin F(b_k)$. Similarly, for $1 \leq i \leq k-1$, the over-intersection $\OI_{F(b_{i+1})}(F(b_i), U_{b_i})$ is $0$ unless $\min(F(b_{i+1})) \notin F(b_i)$, in which case it is $1$.

Consequently, the flat-fillings $F$ of $(T,\sigma)$ which have a non-zero weight are precisely those for which $\min(F(b_1)) > \min(F(b_2)) > \cdots > \min(F(b_k))$, i.e.,~those for which the flag $\cF(T,F)$ is {\em descending}. For each such $F$, the weight $\wt^{\OI}_v(T,\sigma,F)$ is $1$. Thus, Theorem~\ref{t:postnikov} recovers the expansion of $\gamma_1^k$ given by \cite[Lemma~9.4]{AHK}.
\end{example}

\begin{example}
Similarly, we compute the degree of $\gamma_1^k \gamma_n^{r-k}$ for $0\leq k\leq r$, which is a coefficient of the reduced characteristic polynomial. We take $v = 1^k n^{r-k}$.

We observe that, again, there is a unique increasing binary tree for which $(T,\sigma,F)$ can be $v$-compatible. Indeed, by reasoning identical to that used in Example~\ref{ex:descending}, the vertices $\sigma^{-1}(1), \ldots, \sigma^{-1}(k)$ must form a path from the root $\sigma^{-1}(1)$ to $\sigma^{-1}(k)$, with all edges going to the left. Compatibility with $v$ then requires that $F(r(b)) = E$ for the remaining vertices, and therefore these vertices form a path descending from the root with all edges going to the right. If we label the vertices $b_1,\ldots, b_r$ with $b_1 < \cdots < b_r$ in the binary search order, then we have
\[ \sigma(b_i) =
\begin{cases}
	k+1-i & \text{if } 1 \leq i \leq k \\
	i & \text{if } k+1 \leq i \leq r
\end{cases}
\]
The tree $(T,\sigma)$ is shown in Figure~\ref{f:postnikov}(b). We note that the tree $T$ possesses $\binom{n}{k}$ distinct increasing labelings. 

We now compute the weight of a flat filling $(T,\sigma,F)$. For $1 \leq i \leq k$, the over-intersection factor coming from $b_i$ is equal to that computed in Example~\ref{ex:descending}. Thus, in order to have $\wt^{\OI}_v(T,\sigma,F) \neq 0$, it must be that
\[ \{ \varnothing \subsetneq F(b_1) \subsetneq \cdots \subsetneq F(b_k) \subsetneq E \} \]
is a descending flag which is {\em initial}, meaning that $\rk(F(b_i)) = i$ for all $1 \leq i \leq k$. The remaining over-intersection factors are $\OI_{E \setminus F(b_{i-1})}(F(b_i) \setminus F(b_{i-1}), U_{b_i})$, where $k+1 \leq i \leq r$, $U_{b_i}=\max(E \setminus F_{i-1})$. Since $\rk(F(b_i)) = \rk(F(b_{i-1})) + 1$, this over-intersection will be non-zero (and equal to $1$) precisely when 
\[F(b_i)=\overline{F(b_{i-1}) \cup \max(E \setminus F(b_{i-1})}.\]
Hence, each initial descending $k$-step flag uniquely determines a summand with weight $1$ in the expansion of $\gamma_1^k \gamma_n^{r-k}$. We thus recover \cite[Proposition~9.5]{AHK}, which states that $\deg_M(\gamma_1^k \gamma_n^{r-k})$ is equal to the number of intial descending flags of length $k$.
\end{example}

\section{Localization, Valuativity and Permutations} \label{s:localization}

\subsection{Equivariant Localization}

In this section, we will use equivariant localization on the toric variety $X(\Delta_{E})$ to show that matroidal mixed Eulerian numbers are valuative and to relate them to permutation statistics. Our main tool will be the equivariant lift of the Bergman fan from \cite{BEST}.

\begin{definition}
Let $M$ be a matroid on $E$, and let $w$ be a permutation in the symmetric group on $E$, $S_{E}$. The flag of flats attached to $w$ is the unique complete flag of flats, 
\[\cF_w=\{\varnothing=F_0\subsetneq F_1\subsetneq \dots\subsetneq F_r\subsetneq F_{r+1}=E\}.\]
obtained from ordering the following set of flats by inclusion:
\[\left\{\overline{\{w(0)\}},\overline{\{w(0),w(1)\}},\dots \overline{\{w(0),\dots,w(n)\}}\right\}.\]
Define an increasing sequence of nonnegative integers 
\[K(w)=\{k_{1,w},\dots,k_{r+1,w}\}\subseteq E\] by 
$k_{j,w}=\min(w^{-1}(F_j\setminus F_{j-1})),$
that is, $k_{j,w}$ has the property that
\[\overline{\{w(0),\dots,w(k_{j,w}-1)\}}\neq \overline{\{w(0),\dots,w(k_{j,w})\}}.\]
Hence, $k_{1,w}=0$. We will suppress $w$ in the notation when it's understood. 
\end{definition}

The set $w\left(\{k_{1,w},\dots,k_{r+1,w}\}\right)$ makes up a basis $B_w(M)$, called {\em the lex-minimal basis}. 
Given a flag of flats $\cF$ and $K\subset E$ with $|K|=r+1$, we may write
\begin{align*}
S_M(\cF,K)&=\{w\in S_{E}\mid \cF_w=\cF,\ K(w)=K\}\\
S_M(\cF)&=\{w\in S_{E}\mid \cF_w=\cF\}
\end{align*}
Then $\{S_M(\cF,K)\}_{(\cF,K)}$ partitions $S_{E}$ as does $\{S_M(\cF)\}_{\cF}$.

Observe that $S_M(\cF,K)$ is the set of $w\in S_{E}$ such that
 for all $j$ with $1\leq j\leq r$, $w(k_j),\dots, w(k_{j+1}-1)\in F_j$, and $w(k_j)\in F_j\setminus F_{j-1}$.

We have the following straightforward:
\begin{lemma} \label{l:smcondition}
Let $w\in S_{E}$. Then $w\in S_M(\cF)$ if and only if  
\[\left(\min(w^{-1}(F_j\setminus F_{j-1})\right)_{j=1,\dots,r+1}\]
forms an increasing sequence. 
\end{lemma}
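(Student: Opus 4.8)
The plan is to analyze the running closures $G_i \coloneqq \overline{\{w(0),\dots,w(i)\}}$, whose distinct values together with $\varnothing$ constitute the flag $\cF_w$ by definition, and to compare them against the fixed flag $\cF = \{F_0 \subsetneq \dots \subsetneq F_{r+1}\}$. Write $m_j = \min(w^{-1}(F_j\setminus F_{j-1}))$ for the quantities appearing in the statement. Since the differences $F_j \setminus F_{j-1}$ partition $E$, every value $w(i)$ lies in exactly one of them, so each $m_j$ is well-defined.

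The forward implication is essentially formal. If $\cF_w = \cF$, then the flats $F_j$ entering the definition of $k_{j,w}$ are precisely those of $\cF$, so $m_j = k_{j,w}$ for every $j$; and $(k_{j,w})_j$ is increasing by the construction of $K(w)$ (equivalently, because rank $j-1$ is attained strictly before rank $j$ along the running closures). Hence $(m_j)_j$ is increasing.

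The converse is the substantive direction, and the main obstacle. Assume $m_1 < \dots < m_{r+1}$. First I would observe that this forces $m_1 = 0$: if $w(0) \in F_l \setminus F_{l-1}$, then $m_l = 0$, and a strictly increasing sequence of nonnegative integers having a zero entry must begin with $m_1 = 0$. The heart of the argument is the claim that $G_i = F_j$ whenever $m_j \le i < m_{j+1}$, using the convention $m_{r+2} = n+1$. For the inclusion $G_i \subseteq F_j$: were some $w(i')$ with $i' \le i$ to lie in $F_l \setminus F_{l-1}$ with $l \ge j+1$, we would get $m_l \le i' < m_{j+1} \le m_l$, a contradiction; thus $w(0),\dots,w(i)$ all lie in the flat $F_j$, and hence so does $G_i$. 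For the reverse inclusion I would show by induction on $j$ that the first-appearance elements $a_l \coloneqq w(m_l)$ satisfy $\overline{\{a_1,\dots,a_j\}} = F_j$. The base case uses looplessness to give $\overline{\{a_1\}} = F_1$; in the inductive step, $a_j \in F_j \setminus F_{j-1}$ lies outside $\overline{\{a_1,\dots,a_{j-1}\}} = F_{j-1}$, so $\overline{\{a_1,\dots,a_j\}}$ has rank $j$, and being contained in the rank-$j$ flat $F_j$ it equals $F_j$. Since $a_1,\dots,a_j \in \{w(0),\dots,w(i)\}$ for $i \ge m_j$, this gives $F_j \subseteq G_i$, whence $G_i = F_j$.

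Granting the claim, the running closures take exactly the values $F_1 \subsetneq F_2 \subsetneq \dots \subsetneq F_{r+1} = E$ as $i$ ranges over $\{0,\dots,n\}$, so their distinct values together with $\varnothing$ reproduce the flag $\cF$; that is, $\cF_w = \cF$, as desired. I expect the inductive identification $\overline{\{a_1,\dots,a_j\}} = F_j$ to be the delicate point, as it is precisely what turns the combinatorial ordering condition on the $m_j$ into the matroid-theoretic assertion that the running closures realize the prescribed flag.
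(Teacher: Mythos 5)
Your proof is correct. The paper itself offers no argument for this lemma (it is stated as ``straightforward''), and your proof---identifying the running closures $\overline{\{w(0),\dots,w(i)\}}$ with the flats $F_j$ on the intervals $m_j \le i < m_{j+1}$, with the inductive step $\overline{\{a_1,\dots,a_j\}} = F_j$ supplying the reverse inclusion---is exactly the natural argument the paper leaves implicit, carried out completely and without gaps.
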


Localization techniques make use of piecewise polynomials \cite{Brion:piecewise} on $\Delta_{E}$ which can be interpreted as classes in the equivariant Chow ring $A^*_T(M)$. By \cite{EG:equivariant}, there is a non-equivariant restriction map $\iota^*\colon A_T^*(X(\Delta_{E})\to A^*(X(\Delta_{E}))$ which can be interpreted as a map
$\on{PP}^*(\Delta_{E})\to A^*(U_{n+1,n+1})$ where $\on{PP}^*(\Delta_{E})$ denotes the piecewise polynomial functions on $\Delta_{E}$. See  \cite{KP:localization} for additional references for equivariant localization on toric varieties.
By composing this homomorphism with the natural surjection $A^*(U_{n+1,n+1})\to A^*(M)$, we may attach elements of the matroid Chow ring to piecewise polynomials on $\Delta_{E}$. For a piecewise-linear function $\varphi\in \on{PP}^1(\Delta_n)$, this is just the assignment
\[\varphi\mapsto -\sum_F \varphi(e_F)x_F.\]
We will introduce some piecewise polynomials $\lambda_0,\dots,\lambda_n$ on $\Delta_{E}$ 

\begin{definition}
  For $0\leq i\leq n$, we define $\lambda_i\colon \R^{E}\to \R$ by
  \[\lambda_i(x_0,\dots,x_n)=\left((i+1)^\text{st}\text{ highest component of } (x_0,\dots,x_n)\right)-x_n.\]
  Because this is invariant under translation by $\mathbf{1}$, it descends to a piecewise-linear function on $\Delta_{E}$.
\end{definition}

Note that on $\sigma_w$, $\lambda_i$ restricts to $x_{w(i)}-x_n.$

\begin{lemma} We have the following non-equivariant restrictions to $A^1(U_{n+1,n+1})$:
\begin{align*}
    \iota^*\lambda_n&=\gamma_n;\\
    \iota^*(\lambda_k-\lambda_n)&=-\sum_{S\colon |S|\geq k+1} x_S;\\
    \iota^*\lambda_{k}&=\gamma_{k}-\gamma_{k+1}.
\end{align*}
\end{lemma}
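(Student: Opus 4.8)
The plan is to read off all three identities from a single master computation of $\iota^*\lambda_i$ using the non-equivariant restriction formula $\varphi\mapsto -\sum_S \varphi(e_S)x_S$, where $S$ ranges over the proper nonempty subsets of $E$ (these being the flats of $U_{n+1,n+1}$). First I would evaluate $\lambda_i$ at each vertex $e_S$. Since $e_S$ has exactly $|S|$ coordinates equal to $1$ and the rest equal to $0$, its coordinates listed in decreasing order are $|S|$ ones followed by zeros, so its $(i+1)^{\text{st}}$ highest component is $1$ precisely when $|S|\geq i+1$ and $0$ otherwise; subtracting the last coordinate (which is $1$ exactly when $n\in S$) gives
\[\lambda_i(e_S)=[\,|S|\geq i+1\,]-[\,n\in S\,]\]
in Iverson-bracket notation.

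Summing $-\lambda_i(e_S)x_S$ over all $S$ and using the identity $\gamma_n=\sum_{S\ni n}x_S$ recorded in the Remark following Lemma~\ref{l:weights}, I obtain the master formula
\[\iota^*\lambda_i=-\sum_{S\colon|S|\geq i+1}x_S+\gamma_n.\]
The first two identities are then immediate. Taking $i=n$ annihilates the sum, since $|S|\geq n+1$ forces $S=E$, which is not a proper flat, leaving $\iota^*\lambda_n=\gamma_n$. Subtracting this from the master formula at $i=k$ gives $\iota^*(\lambda_k-\lambda_n)=-\sum_{|S|\geq k+1}x_S$.

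For the third identity it remains to identify $\gamma_k-\gamma_{k+1}$ with $\gamma_n-\sum_{|S|\geq k+1}x_S$. Here I would apply Lemma~\ref{l:flatsizesum} to both $\gamma_k$ and $\gamma_{k+1}$ and subtract: the $(n+1-k)\gamma_n$ and $(n-k)\gamma_n$ terms leave a single $\gamma_n$, and a short telescoping check shows the coefficient of every $x_F$ with $|F|\geq k+1$ collapses to $-1$ (the flats of size $k+1$ picking up $-(|F|-k)=-1$, and larger flats picking up $-(|F|-k)+(|F|-k-1)=-1$). This matches the master formula at $i=k$. As an alternative I could instead expand $\gamma_k-\gamma_{k+1}$ via the $\mult$-description in Lemma~\ref{l:weights}(2), using $\sum_S|S|x_S=(n+1)\gamma_n$ to dispose of the fractional terms.

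The argument is essentially a bookkeeping exercise, so I do not anticipate a genuine obstacle; the only steps requiring care are the correct reading of the definition of the ``$(i+1)^{\text{st}}$ highest component'' of $e_S$ and the telescoping of the size-weighted sums from Lemma~\ref{l:flatsizesum} in the third identity.
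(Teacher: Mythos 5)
Your proposal is correct and follows essentially the same route as the paper: evaluate the piecewise-linear functions on the rays $e_S$ (the paper does this for $\lambda_n$ and $\lambda_k-\lambda_n$ rather than via your single master formula, but the computation is identical), and then derive the third identity from Lemma~\ref{l:flatsizesum} by exactly the telescoping you describe. The only cosmetic difference is your uniform formula $\lambda_i(e_S)=[\,|S|\geq i+1\,]-[\,n\in S\,]$, which repackages the paper's two case computations but adds no new ingredient.
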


\begin{proof}
    We observe that 
    \[\lambda_n(e_S)=
    \begin{cases}
      -1&\text{if $n\in S$}\\
      0 &\text{else}.
    \end{cases}\]
    Consequently $\iota^*\lambda_n=\sum_{S\ni n} x_S=\gamma_n$.

    Similarly,
    \[(\lambda_k-\lambda_n)(e_S)=
    \begin{cases}
      1&\text{if $|S|\geq k+1$}\\
      0 &\text{else}.
    \end{cases}\]

    Finally, using Lemma~\ref{l:flatsizesum}, we see
    \begin{align*}
      \gamma_k-\gamma_{k+1}&=(n+1-k)\gamma_n-(n+1-(k+1))\gamma_n\\
      &+
      \sum_{\substack{S\\|S|\geq k+1}} (|S|-k)x_S
      - \sum_{\substack{S\\|S|\geq k+1}} (|S|-(k+1))x_S\\
      &=\gamma_n-\sum_{\substack{S\\|S|\geq k+1}} x_S\\
      &=\gamma_n+(\iota^*\lambda_{k}-\iota^*\lambda_n)\\
      &= \iota^*\lambda_k.\qedhere
    \end{align*} 
\end{proof}
Consequently, we obtain 
\[\gamma_k=\iota^*\lambda_k+\dots+\iota^*\lambda_n.\]

We will now use the following equivariant localization formula which employs an equivariant lift $c_{\topp}(\mathcal{Q}_M)$ of the Bergman class $[\Delta_M]$ :

\begin{theorem} \label{t:localizationformula}
    Let $M$ be a rank $r+1$ matroid on $E$. Let $\varphi_1,\dots,\varphi_r\in\on{PP}^1(\Delta_{E})$ be piecewise-linear functions on $\Delta_{E}$. For $w\in S_{E}$, write 
    \begin{align*}
    e_{\sigma_w}&=(x_{w(0)}-x_{w(1)})^{-1}\dots (x_{w(n-1)}-x_{w(n)})^{-1},\\
    c_{\topp}(\mathcal{Q}_M)_{\sigma_w}&=(-1)^{n-r}\prod_{i\not\in K(w)}(x_{w(i)}-x_n).
    \end{align*}
    Then,
    \begin{equation} \label{eq:degreeformula}
        \deg_M(\iota^*(\varphi_1\dots\varphi_r))=\sum_{w\in S_{E}} (\varphi_1\dots\varphi_rc_{\topp}(\mathcal{Q}_M))_{\sigma_w}e_{\sigma_w}.
    \end{equation}
\end{theorem}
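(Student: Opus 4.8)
The plan is to recognize the stated identity as the Atiyah--Bott--Berline--Vergne localization formula, in its algebraic (equivariant Chow) incarnation due to Edidin--Graham \cite{EG:equivariant}, applied to the smooth projective toric variety $X(\Delta_{E})$ with its dense torus action. The fixed points of this action are indexed by the maximal cones $\sigma_w$, $w\in S_{E}$, and the whole content is to feed the correct integrand into localization: the piecewise-linear classes $\varphi_1,\dots,\varphi_r$ together with the Berget--Eur--Spink--Tseng equivariant lift $c_{\topp}(\mathcal{Q}_M)$ of the Bergman class \cite{BEST}.

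First I would reduce the matroidal degree to a toric (equivariant) integral. Recall that $\deg_M$ is computed by intersecting against the Bergman class $[\Delta_M]$ in $A^*(X(\Delta_{E}))$: for $\alpha\in A^r(M)$ with any lift $\tilde\alpha\in A^r(X(\Delta_{E}))$ one has $\deg_M(\alpha)=\deg_{X(\Delta_{E})}(\tilde\alpha\cdot[\Delta_M])$ \cite{AHK,BEST}, this being well defined because $[\Delta_M]$ annihilates the kernel of $A^*(X(\Delta_{E}))\to A^*(M)$. By \cite{BEST}, the class $c_{\topp}(\mathcal{Q}_M)\in A^{n-r}_T(X(\Delta_{E}))$ is an equivariant lift, so $\iota^*c_{\topp}(\mathcal{Q}_M)=[\Delta_M]$. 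Taking $\tilde\alpha=\iota^*(\varphi_1\cdots\varphi_r)$ and using that $\iota^*$ is a ring homomorphism gives
\[\deg_M(\iota^*(\varphi_1\cdots\varphi_r))=\deg_{X(\Delta_{E})}\bigl(\iota^*(\varphi_1\cdots\varphi_r\, c_{\topp}(\mathcal{Q}_M))\bigr).\]
The class $\beta:=\varphi_1\cdots\varphi_r\,c_{\topp}(\mathcal{Q}_M)$ then lies in the top equivariant degree $A^n_T(X(\Delta_{E}))$, and for such a class the non-equivariant degree of $\iota^*\beta$ coincides with the equivariant pushforward $\pi^T_*\beta\in A^0_T(\pt)=\Z$, since $\iota^*$ commutes with proper pushforward and is the identity on $A^0_T(\pt)=\Z$.

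Next I would apply equivariant localization to $\pi^T_*\beta$. Over the fraction field of $A^*_T(\pt)$,
\[\pi^T_*\beta=\sum_{w\in S_{E}} \frac{\beta|_{\sigma_w}}{e_T(T_{p_w}X(\Delta_{E}))},\]
where $p_w$ is the fixed point attached to $\sigma_w$ and $e_T$ is the equivariant Euler class of the tangent space. For the permutohedral toric variety the tangent weights at $p_w$ are read off from the rays of $\sigma_w$, giving $e_T(T_{p_w}X(\Delta_{E}))=\prod_{i=0}^{n-1}(x_{w(i)}-x_{w(i+1)})$, whose reciprocal is exactly $e_{\sigma_w}$. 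It then remains to identify the two restrictions at $p_w$: each piecewise-linear $\varphi_j$ restricts on the chamber $\sigma_w$ to a linear form, producing the factor $(\varphi_1\cdots\varphi_r)_{\sigma_w}$, while the restriction of $c_{\topp}(\mathcal{Q}_M)$ is computed from the weights of $\mathcal{Q}_M$ at $p_w$; using the description of $\mathcal{Q}_M$ in \cite{BEST} together with the lex-minimal basis $B_w(M)$, these weights are indexed by $i\notin K(w)$, yielding $c_{\topp}(\mathcal{Q}_M)_{\sigma_w}=(-1)^{n-r}\prod_{i\notin K(w)}(x_{w(i)}-x_n)$. Assembling these pieces gives \eqref{eq:degreeformula}.

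The main obstacle is the fixed-point restriction of the Berget--Eur--Spink--Tseng class, that is, verifying $c_{\topp}(\mathcal{Q}_M)_{\sigma_w}=(-1)^{n-r}\prod_{i\notin K(w)}(x_{w(i)}-x_n)$, since this requires unwinding their construction of $\mathcal{Q}_M$ and matching conventions. In particular one must track the sign $(-1)^{n-r}$ coming from the rank of the bundle, and the appearance of $x_{w(i)}-x_n$ rather than $x_{w(i)}$, which reflects the trivialization used to pass from $(\Gm)^{E}$ to the quotient torus acting on $N_\R=\R^{E}/\R\mathbf{1}$ (the same normalization by $x_n$ appearing in the definition of the $\lambda_i$). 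The remaining ingredients --- that $\iota^*$ intertwines equivariant and non-equivariant degrees in top degree, and the computation of the tangent Euler class of a smooth toric variety at a fixed point --- are standard \cite{Brion:piecewise,KP:localization}.
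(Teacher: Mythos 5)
Your proposal is correct and takes essentially the same approach as the paper: both arguments rest on the Berget--Eur--Spink--Tseng equivariant lift $c_{\topp}(\mathcal{Q}_M)$ of the Bergman class together with equivariant localization on $X(\Delta_{E})$, with the fixed points indexed by the maximal cones $\sigma_w$. The only difference is one of emphasis: the paper cites \cite[Theorem~7.6]{BEST} for the localization formula and the fixed-point restrictions, devoting its proof to the change of torus from $T=(\mathbf{G}_m)^{E}$ to the quotient $T'$ via the splitting $T_i\mapsto T_iT_n^{-1}$ (which is precisely where the normalization $x_{w(i)}-x_n$ comes from), whereas you re-derive the fixed-point sum via Atiyah--Bott/Edidin--Graham and defer to \cite{BEST} only for the restrictions of $c_{\topp}(\mathcal{Q}_M)$ --- a difference in bookkeeping, not in route.
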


\begin{proof}
  This is very slight modification of \cite[Theorem~7.6]{BEST}) which considers the action of the algebraic torus $T=(\mathbf{G}_m)^{E}$ on the
  permutohedral toric variety $X(\Delta_{E})$ induced through the projection $\R^E\to \R^{E}/\R\mathbf{1}$. There is an equivariant $K$-class $[\mathcal{Q}_M]\in K^0_T(X(\Delta_{E}))$ whose top Chern class (considered as an element of $A_T^*(X(\Delta_{E}))$, described as a piecewise polynomial) that has the following restriction to $\sigma_w$:
  \[ c_{\topp}(\mathcal{Q}_M)_{\sigma_w}=(-1)^{n-r}\prod_{i\not\in K(w)} t_i\]
  where $t_i$ is the character on $T$ corresponding to a coordinate $T_i$ of $(\mathbf{G}_m)^{E}$. 
  By \cite[Theorem~7.6]{BEST}), the non-equivariant restriction of $c_{\topp}(\mathcal{Q}_M)$ to $A^*(X(\Delta_{E}))$ is the Bergman class $[\Delta_M]$, i.e., one has for  all $c\in A^r(X(\Delta_{E}))$,
  \[\deg_{X(\Delta_{E})}(c \cup [\Delta_M])=\deg_M(c).\]

  Let $T'=(\mathbf{G}_m)^{[E]}/\mathbf{G}_m$ be the quotient of $T$ by the diagonal subtorus. The action of $T$ on $X(\Delta_{E})$ factors through $T'.$ Write $Z_0,\dots,Z_{n-1}$ for the coordinates of $T'$ given by $Z_i=T_iT_n^{-1}$.  The quotient homomorphism $T\to T'$ has a splitting given by $T_i\mapsto Z_i$ for $i=0,\dots,n-1$ and $T_n\mapsto 1$.
  Let $e$ denote the trivial group, so the homomorphisms
  \[e\hookrightarrow T'\hookrightarrow T\]
  induce the restriction maps
  \[A_T^*(X(\Delta_{E}))\to A_{T'}^*(X(\Delta_{E}))\to A^*(X(\Delta_{E})).\]
  The image of $c_{\topp}(\mathcal{Q}_M)\in A^{n-r}_T(X(\Delta_{E}))$ under the first map is the class in the statement of this Lemma, and it maps to $[\Delta_M]$ under the second.
\end{proof}

Observe that $c_{\topp}(\mathcal{Q}_M)_{\sigma_w}$ can be written in terms of $(\lambda_i)_{\sigma_w}$:
\[c_{\topp}(\mathcal{Q}_M)_{\sigma_w}=(-1)^{n-r}\prod_{i\not\in K(w)}(\lambda_i)_{\sigma_w}. \]

In the above, the restriction $(\varphi_i)_{\sigma_w}$ is a linear function and thus $(\varphi_1\dots\varphi_rc_r(\mathcal{Q}_M))_{\sigma_w}$ is a polynomial, i.e.,~an element of $\on{Sym}^* N_{\R}^\vee$.
A priori, the right side of the degree formula \eqref{eq:degreeformula} is only a rational function, i.e.,~an element of the field of fractions of $\on{Sym}^*N_{\R}^\vee$. However, it is in fact equal to an integer (under integrality assumptions on the $\varphi$'s) by localization.

\subsection{Valuativity} \label{ss:valuativity}

We begin by reviewing valuativity of matroid invariants \cite{DerksenFink:valuative}. Recall that for a matroid $M$ on $E$, the matroid polytope is 
\[P(M)=\on{Conv}(\{e_B\mid B\text{ is a basis for }M\}).\]
Write $1_{P(M)}\colon \R^{E}\to \R$ for the characteristic function of $P(M)$. Let $\mathcal{M}(E)$ denote the set of all matroids on $E$. A function $\phi\colon \mathcal{M}(E)\to A$ to an abelian group $A$ is {\em valuative} if for any matroids $M_1,\dots,M_{\ell}$ on $E$ and integers $a_1,\dots,a_{\ell}$ for which $\sum_{i=1}^{\ell} a_i1_{P(M_i)}=0$, we have $\sum_{i=1}^{\ell} a_i\phi(M_i)=0$.

\begin{theorem} Let $c_1,\dots c_n$ be nonnegative integers with $c_1+\dots+c_n=r$. 
    The $\Z$-valued invariant 
    \[M\mapsto \deg_M(\gamma_1^{c_1}\dots\gamma_n^{c_n})\]
    is valuative.
\end{theorem}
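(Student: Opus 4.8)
The plan is to feed the localization formula of Theorem~\ref{t:localizationformula} into the Derksen--Fink theory of valuative invariants. The guiding observation is that, once the degree has been localized, the matroid $M$ enters the computation of $\deg_M(\gamma_1^{c_1}\cdots\gamma_n^{c_n})$ \emph{only} through the rank sequences $K(w)$, and these are known to vary valuatively with $M$.

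First I would rewrite the class so that it becomes an $M$-independent piecewise polynomial. Using the identity $\gamma_k=\iota^*\lambda_k+\dots+\iota^*\lambda_n$, let $\varphi_1,\dots,\varphi_r\in\on{PP}^1(\Delta_E)$ be the list of piecewise-linear functions in which $\lambda_k+\dots+\lambda_n$ appears $c_k$ times for each $k$; since $\iota^*$ is a ring homomorphism, $\iota^*(\varphi_1\cdots\varphi_r)=\gamma_1^{c_1}\cdots\gamma_n^{c_n}$. Applying Theorem~\ref{t:localizationformula} gives
\[\deg_M(\gamma_1^{c_1}\cdots\gamma_n^{c_n})=\sum_{w\in S_E}\bigl(\varphi_1\cdots\varphi_r\,c_{\topp}(\mathcal Q_M)\bigr)_{\sigma_w}\,e_{\sigma_w}.\]
On each cone $\sigma_w$ one has $(\lambda_i)_{\sigma_w}=x_{w(i)}-x_n$, so the factors $(\varphi_1\cdots\varphi_r)_{\sigma_w}$ and $e_{\sigma_w}$ are elements of the fraction field $F$ of $\on{Sym}^*N_\R^\vee$ that do not depend on $M$. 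The only matroid-dependent factor is
\[c_{\topp}(\mathcal Q_M)_{\sigma_w}=(-1)^{n-r}\prod_{i\notin K(w)}(x_{w(i)}-x_n),\]
which depends on $M$ solely through the rank sequence $K(w)=K(w,M)$.

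Next I would package this dependence linearly. For each $w$ let $R_w$ denote the finite set of possible rank sequences, let $\Lambda=\bigoplus_{w\in S_E}\Z[R_w]$ be the free abelian group on their disjoint union, and define the labelled rank-sequence invariant $\widetilde{\mathcal G}(M)=\sum_{w}[K(w,M)]_w\in\Lambda$. Let $L\colon\Lambda\to F$ be the fixed group homomorphism sending the basis element $[S]_w$ to $(\varphi_1\cdots\varphi_r)_{\sigma_w}\,e_{\sigma_w}\,(-1)^{n-r}\prod_{i\notin S}(x_{w(i)}-x_n)$. By the previous paragraph, $\deg_M(\gamma_1^{c_1}\cdots\gamma_n^{c_n})=L(\widetilde{\mathcal G}(M))$. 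The invariant $\widetilde{\mathcal G}$ is precisely the labelled rank-sequence data underlying Derksen--Fink's universal valuative invariant: for each fixed $w$ the assignment $M\mapsto[K(w,M)]_w$ is valuative \cite{DerksenFink:valuative}, and hence so is $\widetilde{\mathcal G}$.

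Finally I would conclude by composition. Given matroids $M_1,\dots,M_\ell$ and integers $a_1,\dots,a_\ell$ with $\sum_i a_i\,1_{P(M_i)}=0$, valuativity of $\widetilde{\mathcal G}$ yields $\sum_i a_i\,\widetilde{\mathcal G}(M_i)=0$ in $\Lambda$, and applying the homomorphism $L$ gives $\sum_i a_i\deg_{M_i}(\gamma_1^{c_1}\cdots\gamma_n^{c_n})=L(0)=0$. That the coefficients $L([S]_w)$ lie in $F$ rather than in $\Z$ causes no difficulty, since the linear relation is imposed in $\Lambda$ before $L$ is evaluated and localization guarantees each output is the integer $\deg_{M_i}$. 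The substantive input is the per-permutation valuativity of the rank sequence $M\mapsto K(w,M)$ drawn from \cite{DerksenFink:valuative}; granting it, the geometric content reduces to the two facts that $\gamma_1^{c_1}\cdots\gamma_n^{c_n}$ is a single class independent of $M$ and that the sole matroid-dependent ingredient of the localization formula, the equivariant lift $c_{\topp}(\mathcal Q_M)$, is assembled from these rank sequences. Equivalently, one may phrase the argument as the valuativity of the Bergman class $M\mapsto[\Delta_M]$ followed by capping with the fixed class $\gamma_1^{c_1}\cdots\gamma_n^{c_n}$, which is a group homomorphism.
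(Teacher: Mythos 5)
Your proof is correct, and its architecture is the same as the paper's: both arguments rest on Theorem~\ref{t:localizationformula} together with the observation that $\gamma_1^{c_1}\cdots\gamma_n^{c_n}$ is pulled back from a fixed, $M$-independent piecewise-linear datum, so that the entire dependence on $M$ in the degree formula sits in $c_{\topp}(\mathcal{Q}_M)$ and valuativity follows by linearity. The one genuine difference is the external input. The paper simply cites \cite[Proposition~5.6]{BEST}, which says that $M\mapsto c_{\topp}(\mathcal{Q}_M)$ is valuative as an element of the fixed group $A_T^*(X(\Delta_{E}))$, and stops there. You instead rederive this fact from the more primitive statement that, for each fixed $w\in S_{E}$, the assignment $M\mapsto K(w,M)$ (equivalently, the lex-minimal basis $B_w(M)$) is valuative --- in effect inlining the proof of that proposition. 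What your version buys is self-containedness and the explicit observation that the degree is a fixed linear functional $L$ applied to a chain-labelled rank-sequence invariant $\widetilde{\mathcal{G}}$; what the paper's version buys is brevity and reuse of an already-established property of the tautological classes. One point of care in your route: the input you need from \cite{DerksenFink:valuative} is the valuativity of the rank-sequence (or rank-function) indicators along a \emph{fixed} maximal chain --- the labelled, non-symmetrized statement --- and not merely the valuativity of Derksen's symmetric $\mathcal{G}$-invariant, which is a sum over all $w$ and does not by itself imply the per-$w$ claim; the labelled statement is indeed in Derksen--Fink (and can alternatively be deduced from the standard fact that passing to the face of $P(M)$ minimizing a fixed generic linear functional preserves indicator-function relations), so your citation is legitimate once stated in this form. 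Your handling of the fraction-field issue is also correct: the valuative relation is imposed in $\Lambda$ before applying $L$, so the non-integrality of the individual localization terms is harmless.
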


\begin{proof}
By \cite[Proposition~5.6]{BEST}, $M\mapsto c_{\topp}(\mathcal{Q}_{M})$ is a valuative invariant in $A^*_T(X(\Delta_{E}))$. Consequently, because hypersimplex classes are obtained from piecewise-linear functions, $\deg_M(\gamma_1^{c_1}\dots\gamma_n^{c_n})$ can be computed by  Theorem~\ref{t:localizationformula} and is thus valuative.
\end{proof}

\subsection{Permutation Statistics}
We discuss the relationship between intersection numbers in the matroid Chow ring and permutation statistics generalizing the results in \cite[Section~3]{Postnikov:permutohedra} on the usual mixed Eulerian numbers. It would be very interesting to compare these arguments to the shelling techniques for the order complex of a matroid \cite[Section~7.6]{Bjorner:shellability} in which permutation statistics also enter.

Recall that for a permutation $w\in S_{E}$,
the descent set is 
\[\Des(w)=\{i\mid w(i)>w(i+1)\},
\]
and we have the descent statistic $\des(w)=|\Des(w)|$.

\begin{definition} \label{d:descentset}
Given  integers $c_0,\dots,c_{n}$ with $c_1+\dots+c_{n}=n$, let
\[I_{c_0,\dots,c_n}=\{i\mid c_0+\dots+c_i<i+1\}.\]

Given integers $d_0,\dots,d_{n}$ with $d_0+\dots+d_n=r$, and a set $K\subset E$ with $|K|=r+1$, let 
\[c_i=\begin{cases}
d_i &\text{if }i\in K\\
d_i+1 &\text{else}, 
\end{cases}\]
and set 
\[I'_{(d_0,\dots,d_n),K}=I_{c_0,\dots,c_n}.\]
\end{definition}

We include a slightly different treatment of the proof given in \cite[Proposition~3.5]{Postnikov:permutohedra} to allow us to evaluate degrees by localization:
\begin{lemma} \label{l:localizationevaluation}
    Let $\xi_0,\dots,\xi_{n-1}$ with $\xi_i=x_i-x_n$ be coordinates on $\R^{E}/\R\mathbf{1}$ (with $\xi_n=0$). Let $w\in S_{E}$. Let $c_0,\dots,c_n$ be  integers with $c_0+\dots+c_n=n$. The constant term of the rational function
    \[\frac{\xi_{w(0)}^{c_0}\dots\xi_{w(n)}^{c_n}}{(\xi_{w(0)}-\xi_{w(1)})\dots (\xi_{w(n-1)}-\xi_{w(n)})}\]
    expressed as a power series in $\Q[\xi^{\pm 1}_0]\ps{\xi_0^{-1}\xi_1,\dots,\xi_{n-1}^{-1}\xi_n}/(\xi_{n-1}^{-1}\xi_n)$
    is $(-1)^{\des(w)}$ if $\Des(w)=I_{c_0,\dots,c_n}$ and $0$ otherwise.
\end{lemma}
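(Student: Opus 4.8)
The plan is to expand the rational function explicitly in the indicated power series ring and to match its constant term against a weighted lattice-point count.

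\emph{Setup and the sign.} First I would pass to the coordinates $v_j=\xi_{j-1}^{-1}\xi_j$, so that $\xi_i=\xi_0 v_1\cdots v_i$ and the ambient ring is $\Q[\xi_0^{\pm1}]\ps{v_1,\dots,v_n}/(v_n)$; the constant term is the coefficient of $\xi_0^0v_1^0\cdots v_n^0$, and since this already forces the exponent of $v_n$ to be $0$ the quotient by $(v_n)$ is harmless. For each denominator factor I write, with $m_i=\min(w(i),w(i+1))$, $M_i=\max(w(i),w(i+1))$, $\rho_i=\xi_{M_i}/\xi_{m_i}=\prod_{m_i<l\le M_i}v_l$, and $\epsilon_i=+1$ if $w(i)<w(i+1)$ and $\epsilon_i=-1$ otherwise,
\[\frac{1}{\xi_{w(i)}-\xi_{w(i+1)}}=\frac{\epsilon_i}{\xi_{m_i}}\sum_{k_i\ge0}\rho_i^{\,k_i},\]
pulling out the smaller-index (hence larger) variable so that each $\rho_i$ is a genuine power series variable. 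The product of these signs is $\prod_i\epsilon_i=(-1)^{\des(w)}$, which is the source of the claimed sign.

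\emph{Reduction to a cut system.} Collecting the leading factors with the numerator yields a single Laurent monomial, whose $\xi_0$-degree one computes to be $0$ and which equals $\prod_j v_j^{d_j}$ with $d_j=a_j-b_j$, where $a_j=\sum_{i:\,w(i)\ge j}c_i$ and $b_j=\#\{i:w(i)\ge j,\ w(i+1)\ge j\}$. Hence the whole function is $(-1)^{\des(w)}\big(\prod_j v_j^{d_j}\big)\prod_i\sum_{k_i\ge0}\rho_i^{\,k_i}$, and extracting the coefficient of $v_1^0\cdots v_n^0$ shows that the constant term is $(-1)^{\des(w)}$ times the number of tuples $(k_i)_{i=0}^{n-1}$ of nonnegative integers solving the cut system
\[\sum_{i:\,m_i<j\le M_i}k_i=b_j-a_j\qquad(j=1,\dots,n),\]
each solution contributing $1$ since every geometric-series coefficient is $1$.

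\emph{Uniqueness and integrality.} The column attached to $k_i$ is the indicator of the interval $\{m_i+1,\dots,M_i\}$, that is $g_{M_i}-g_{m_i}$ with $g_t=\mathbf 1_{\{1,\dots,t\}}$. These are precisely the edge vectors of the graph on the vertex set of values $\{0,1,\dots,n\}$ whose edges are the pairs $\{w(i),w(i+1)\}$, i.e.\ the Hamiltonian path $w(0)\,{-}\,w(1)\,{-}\cdots{-}\,w(n)$. Since $g_1-g_0,\dots,g_n-g_0$ is a $\Z$-basis and the edges of a spanning tree also form a $\Z$-basis, the cut system has a \emph{unique} rational solution, and it is automatically integral. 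Thus the solution count is $0$ or $1$, the constant term is $0$ or $(-1)^{\des(w)}$, and only nonnegativity of the unique solution remains to be decided.

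\emph{Feasibility and the descent condition.} It remains to show that the unique integral solution is nonnegative exactly when $\Des(w)=I_{c_0,\dots,c_n}$. I would solve the system by sweeping the threshold downward from $j=n$: writing $r_j:=b_j-a_j$ and $q=w^{-1}(n)$, the top equation reads $k_{q-1}+k_q=-c_q$ (omitting absent edges), forcing $c_{w^{-1}(n)}=0$ and $k_{q-1}=k_q=0$; thereafter each value $j$ of the path has at most two incident edges, and the local balance $r_j-r_{j+1}$ separates its ``up'' and ``down'' edges. Telescoping these balances expresses each $k_i$ as a partial sum of the $c$'s corrected by the path geometry, and the constraint $k_i\ge0$ turns into the dichotomy $C_i\le i$ versus $C_i\ge i+1$ (with $C_i=c_0+\dots+c_i$), i.e.\ $i\in I_{c_0,\dots,c_n}$ according as $i$ is a descent of $w$. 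I expect this last step to be the main obstacle: one must match the ascent/descent pattern of the path against the partial sums of $c$ to convert the nonnegativity inequalities into the set equality $\Des(w)=I_{c_0,\dots,c_n}$, paying particular attention to the boundary thresholds $j=1$ and $j=n$ and to the convention $\xi_n=0$ encoded by the quotient $(\xi_{n-1}^{-1}\xi_n)$.
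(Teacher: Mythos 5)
Your reduction is set up correctly and is genuinely different in packaging from the paper's argument: the change of variables $v_j=\xi_{j-1}^{-1}\xi_j$, the extraction of the overall sign $(-1)^{\des(w)}$ from the geometric-series expansions, the reformulation of the constant term as a count of nonnegative integer solutions to the cut system of the Hamiltonian path $w(0)-w(1)-\cdots-w(n)$, and the uniqueness-plus-integrality argument via spanning-tree edge vectors are all sound. But there is a genuine gap exactly where you flag it: the equivalence ``the unique integral solution is nonnegative if and only if $\Des(w)=I_{c_0,\dots,c_n}$'' is never proved, and the top-down sweep you sketch for it fails. Its very first deduction---that the equation $k_{q-1}+k_q=-c_q$ at threshold $j=n$ forces $c_{w^{-1}(n)}=0$ and $k_{q-1}=k_q=0$---implicitly assumes $c_q\geq 0$, whereas the lemma allows arbitrary integers $c_i$. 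Concretely, take $n=2$, $w$ the identity, and $(c_0,c_1,c_2)=(3,0,-1)$: then $\Des(w)=I_{c_0,c_1,c_2}=\varnothing$, the constant term equals $1$ (by direct expansion), and the unique solution of your cut system is $(k_0,k_1)=(2,1)$, with $c_{w^{-1}(2)}=-1\neq 0$ and $k_1\neq 0$. Following your sketch literally would return $0$ here, which is wrong.

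The repair is to solve the cut system in position order along the path rather than by descending value threshold---which is, in effect, what the paper does by eliminating $\xi_{w(0)},\xi_{w(1)},\dots$ one denominator factor at a time. Writing $C_i=c_0+\cdots+c_i$, one checks that the unique solution is
\[ k_i=\begin{cases} C_i-i-1 & \text{if } w(i)<w(i+1),\\ i-C_i & \text{if } w(i)>w(i+1),\end{cases}\]
(the index $k_i$ of the geometric series attached to the $i$th factor is finalized once $\xi_{w(i)}$ is eliminated, and the displayed value is the only one making the exponent of $\xi_{w(i)}$ vanish). Nonnegativity of $k_i$ then reads $C_i\geq i+1$ at ascents and $C_i\leq i$ at descents, which is precisely the set equality $\Des(w)=I_{c_0,\dots,c_n}$, completing your argument for arbitrary integers $c_i$. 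With this step inserted your route does buy something over the paper's: it isolates the sign once and for all and makes it transparent that the count is automatically $0$ or $1$, facts that the paper's sequential elimination carries implicitly inside its induction.
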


\begin{proof}
    The ring $\Q[\xi_0^{\pm 1}]\ps{\xi_0^{-1}\xi_1,\dots,\xi_{n-1}^{-1}\xi_n}/(\xi_{n-1}^{-1}\xi_n)$ contains $\xi_i^{-1}\xi_j$ for any $i<j$. We will eliminate powers of $\xi_i$ in the order $\xi_{w(0)},\dots,\xi_{w(n)}$.
    We prove by induction on $i$ that the constant term in the statement above is equal to the constant term of
    \[(-1)^{|\Des(w)\cap \{0,\dots,i-1\}|}\frac{\xi_{w(i)}^{c_0+\dots+c_i-i}\xi_{w(i+1)}^{c_{i+1}}\dots\xi_{w(n)}^{c_n}}{(\xi_{w(i)}-\xi_{w(i+1)})\dots (\xi_{w(n-1)}-\xi_{w(n)})}\]
    if $\Des(w)\cap \{0,\dots,i-1\}=I_{c_0,\dots,c_n}\cap \{0,\dots,i-1\}$ and is $0$ otherwise.

    We explain the first step, i.e.~that from $i=0$ to $i=1$.
    We note that
    \[\xi_{w(0)}^{c_0}(\xi_{w(0)}-\xi_{w(1)})^{-1}=
    \begin{cases}
        \sum_{k=0}^\infty \xi_{w(0)}^{c_0-k-1}\xi_{w(1)}^k &\text{if }w(0)<w(1)\\
        -\sum_{k=0}^\infty \xi_{w(0)}^{c_0+k}\xi_{w(1)}^{-k-1} &\text{if }w(0)>w(1).
    \end{cases}
    \]
    
    If $w(0)<w(1)$, there is no summand without a negative power of $\xi_{w(0)}$ unless $c_0\geq 1$. In that case, one obtains the summand $\xi_{w(0)}^0\xi_{w(1)}^{c_0-1}$. Thus one obtains a contribution exactly when $0\not\in \Des(w)$ and $0\not\in I_{c_0,\dots,c_n}$.

    If $w(0)<w(1)$, there is no summand without a positive power of $\xi_{w(0)}$ unless $c_0\leq 0$. In that case, one
    obtains $-\xi_{w(0)}^0\xi_{w(1)}^{c_0-1}$. This occurs when $0\in \Des(w)$ and $0\in I_{c_0,\dots,c_n}$.

    Thus, if $\Des(w)\cap \{0\}=I_{c_0,\dots,c_n}\cap \{0\}$, we obtain the expression
    \[(-1)^{|\Des(w)\cap \{0\}|}\frac{\xi_{w(1)}^{c_0+c_1-1}\xi_{w(2)}^{c_{2}}\dots\xi_{w(n)}^{c_n}}{(\xi_{w(1)}-\xi_{w(2)})\dots (\xi_{w(n-1)}-\xi_{w(n)})}.\]
    The general step is analogous.
\end{proof}

\begin{theorem} \label{t:descentcounts}
  Let $M$ be a rank $r+1$ matroid on $E$. Let $d_0,\dots,d_{n}$ be nonnegative integers with $d_0+\dots+d_{n}=r$. Then,
  \[\deg_M(\iota^*(\lambda_0^{d_0}\dots\lambda_{n}^{d_n}))=\sum_w (-1)^{n-r+\des(w)}
  \]
  where the sum is taken over $w\in S_{E}$ with $\Des(w)=I'_{(d_0,\dots,d_n,K(w))}$
\end{theorem}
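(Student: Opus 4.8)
The plan is to combine the equivariant localization formula of Theorem~\ref{t:localizationformula} with the residue/constant-term computation of Lemma~\ref{l:localizationevaluation}. First I would apply Theorem~\ref{t:localizationformula} with the piecewise-linear functions taken to be the $\lambda_i$, so that the product $\lambda_0^{d_0}\dots\lambda_n^{d_n}$ plays the role of $\varphi_1\dots\varphi_r$. This requires $d_0+\dots+d_n=r$, which is exactly our hypothesis. The formula then expresses $\deg_M(\iota^*(\lambda_0^{d_0}\dots\lambda_n^{d_n}))$ as a sum over $w\in S_E$ of the quantity $(\lambda_0^{d_0}\dots\lambda_n^{d_n}\,c_{\topp}(\mathcal{Q}_M))_{\sigma_w}\,e_{\sigma_w}$.

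Next I would substitute the known restrictions on the cone $\sigma_w$. Recall $(\lambda_i)_{\sigma_w}=x_{w(i)}-x_n=\xi_{w(i)}$, and by the remark following Theorem~\ref{t:localizationformula} we have $c_{\topp}(\mathcal{Q}_M)_{\sigma_w}=(-1)^{n-r}\prod_{i\notin K(w)}(\lambda_i)_{\sigma_w}=(-1)^{n-r}\prod_{i\notin K(w)}\xi_{w(i)}$. The edge factor is $e_{\sigma_w}=\prod_{j=0}^{n-1}(\xi_{w(j)}-\xi_{w(j+1)})^{-1}$ in these coordinates. Pulling the global sign $(-1)^{n-r}$ out of the sum, each summand becomes
\[
(-1)^{n-r}\frac{\xi_{w(0)}^{c_0}\dots\xi_{w(n)}^{c_n}}{(\xi_{w(0)}-\xi_{w(1)})\dots(\xi_{w(n-1)}-\xi_{w(n)})},
\]
where $c_i=d_i$ if $i\in K(w)$ and $c_i=d_i+1$ if $i\notin K(w)$, since the extra factor $\prod_{i\notin K(w)}\xi_{w(i)}$ bumps up exactly those exponents by one. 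This matches the $c_i$ defined in Definition~\ref{d:descentset}, and one checks $\sum c_i=r+(n+1-(r+1))=n$, so Lemma~\ref{l:localizationevaluation} applies.

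I would then interpret the localization sum as an extraction of constant terms. Because the full degree is an integer, the rational-function sum over $w$ must reduce to its total constant term; more carefully, I would argue that each individual summand's contribution to the degree is its constant term when expanded in the power-series ring of Lemma~\ref{l:localizationevaluation}, so that the $w$-th term contributes $(-1)^{n-r}(-1)^{\des(w)}$ precisely when $\Des(w)=I_{c_0,\dots,c_n}=I'_{(d_0,\dots,d_n),K(w)}$ and $0$ otherwise. Summing over all $w$ satisfying the descent condition yields
\[
\deg_M(\iota^*(\lambda_0^{d_0}\dots\lambda_n^{d_n}))=\sum_{w}(-1)^{n-r+\des(w)},
\]
as claimed.

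The main obstacle is the last step: justifying that the integer degree equals the sum of the individual constant terms. The localization formula a priori produces a sum of rational functions whose total is an integer, but Lemma~\ref{l:localizationevaluation} computes a constant term in a specific noncommutative-looking iterated power-series ring $\Q[\xi_0^{\pm1}]\ps{\xi_0^{-1}\xi_1,\dots}/(\xi_{n-1}^{-1}\xi_n)$, and one must check that expanding each summand in this fixed ring is compatible with taking the overall degree. I would handle this by noting that the degree map corresponds to the constant-term functional (the full sum, being a constant, equals its own constant term), and that the constant-term operation is additive, so it can be applied summand by summand after fixing the single expansion convention used in Lemma~\ref{l:localizationevaluation}. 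Verifying that this expansion convention is legitimate uniformly across all $w$ — i.e.\ that the choice of expansion direction does not depend on $w$ in a way that breaks additivity — is the delicate point, but it follows from using the same coordinate ordering $\xi_0,\xi_1,\dots,\xi_n$ throughout.
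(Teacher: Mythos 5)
Your proposal is correct and follows essentially the same route as the paper: apply Theorem~\ref{t:localizationformula} to the $\lambda_i$'s, absorb $c_{\topp}(\mathcal{Q}_M)_{\sigma_w}=(-1)^{n-r}\prod_{i\notin K(w)}\xi_{w(i)}$ into the exponents to produce the $c_i$'s of Definition~\ref{d:descentset}, and then extract constant terms summand by summand via Lemma~\ref{l:localizationevaluation}. The ``delicate point'' you flag — that the integer-valued degree equals the sum of constant terms in the fixed expansion ring — is exactly what the paper handles by its remark preceding the proof (the localization sum, a priori a rational function, is in fact an integer), and your resolution by additivity of the constant-term functional in a single uniform expansion convention matches the paper's implicit argument.
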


\begin{proof}
  By the above discussion,
  $\deg_M(\iota^*(\lambda_1^{d_1}\dots\lambda_{n}^{d_n}))$ is equal to the constant term of the rational function
   \[\sum_{w} (\lambda_0^{d_0}\dots\lambda_{n}^{d_{n}}c_{\topp}(\mathcal{Q}_M))_{\sigma_w}e_{\sigma_w}
   =(-1)^{n-r}\sum_{w} (\lambda_0^{c_0}\dots\lambda_{n}^{c_{n}})_{\sigma_w}e_{\sigma_w}\]
   where the $c_i$'s are described in Definition~\ref{d:descentset}.
  On $\sigma_{w}$, the piecewise-linear function $\lambda_0^{c_0}\dots\lambda_n^{c_n}$ restricts to 
  $\xi_{w(0)}^{c_0}\dots \xi_{w(n)}^{c_n}$.
  Thus, the contribution from $w$ is 
  \[(-1)^{n-r} \xi_{w(0)}^{c_0}\dots \xi_{w(n)}^{c_n}/(\xi_{w(0)}-\xi_{w(1)})\dots (\xi_{w(n-1)}-\xi_{w(n)}).\]
  The result now follows from Lemma~\ref{l:localizationevaluation}.
\end{proof}

There may be some combinatorial significance in first summing over the flags of flats, writing
  \[\deg_M(\iota^*(\lambda_0^{d_0}\dots\lambda_{n}^{d_n}))=\sum_\cF \sum_w (-1)^{n-r+\des(w)} 
  \]
(where the inner sum is over $w\in S_M(\cF)$ with $\Des(w)=I'_{(d_0,\dots,d_n,K(w))}$,
and using the condition in Lemma~\ref{l:smcondition} to interpret $S_M(\cF)$.

\section{Perfect Matroid Designs} \label{s:perfect}

In this section, we study the matroidal mixed Eulerian numbers of perfect matroid designs, noting that they encompass the remixed Eulerian numbers of Nadeau and Tewari \cite{NT:remixed} when $q$ is a prime power.

\subsection{Background on perfect matroid designs}
\begin{definition}
    A matroid $M$ is said to be {\em a perfect matroid design} if there are positive integers $n_1<n_2<\dots<n_r$ such that for any full flag of flats
    \[\varnothing=F_0 \subsetneq F_1\subsetneq F_2\subsetneq \dots\subsetneq F_r\subsetneq E,\]
    we have $|F_{j}|=n_j$ for $1\leq j\leq r$.
\end{definition}

For convenience, we will suppose that the matroids in this section are simple, so $n_1=1$. We also set $n_{r+1} = n+1$.
Perfect matroid designs are surveyed in \cite{Deza:PMD}. It is observed in \cite[Proposition~2.2.3]{Deza:PMD}, albeit with a misprint (compare with \cite{DezaSinghi}), that for $1\leq i\leq r$, the number of rank $i$ flats contained in a given rank $i+1$ flat in a perfect matroid design is
\[N_i=\prod_{j=0}^{i-1}\frac{n_{i+1}-n_j}{n_i-n_j}.\]
Note that $N_1=n_2$.

\begin{definition}
    The {\em perfect matroidal mixed Eulerian numbers} are defined to be
    \[A_{(c_1,\dots,c_r)_n}(M)=\deg_M(\gamma_{n_1}^{c_1}\dots\gamma_{n_r}^{c_r})\]
    where $(c_1,\dots,c_r)\in\Z_{\geq 0}^r$ with $c_1+\dots+c_r=r$,
\end{definition}

For perfect matroid designs, the matroidal mixed Eulerian  numbers obey a recurrence that allows any perfect matroidal mixed Eulerian number to be expressed in terms of $A_{(1,\dots,1)_n}(M) = \deg_M(\gamma_{n_1} \cdots \gamma_{n_r})$. Before, we establish it, we identify the matroidal mixed Eulerian numbers of rank $3$ matroids.

\begin{lemma} \label{l:rank3uniform}
    Let $M$ be a rank $3$ perfect matroid design on $E$ where $|F_i|=n_i$ for all flats $F_i$ of rank $i$. Then we have the following matroidal mixed Eulerian numbers:
    \begin{align}
      \label{eq:n1n1} \deg_M(\gamma_{n_1}^2)&=\frac{(n+1-n_1)(n+1-n_2)n_1}{n_2},\\
      \label{eq:n1n2} \deg_M(\gamma_{n_1}\gamma_{n_2})&=(n+1-n_1)(n+1-n_2),\\
      \label{eq:n2n2} \deg_M(\gamma_{n_2}^2)&=(n+1-n_2)^2
    \end{align}
\end{lemma}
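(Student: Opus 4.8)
The plan is to compute each of the three degrees by expanding one of the two hypersimplex classes through the $\OI$-description of Lemma~\ref{l:weights}, and then evaluating the remaining products $x_F\gamma_b$ via the splitting of Lemma~\ref{l:productwithflag}. In a loopless rank $3$ matroid the only proper nonempty flats are the rank $1$ flats (points, of size $n_1$) and the rank $2$ flats (lines, of size $n_2$), and a dimension count in Lemma~\ref{l:productwithflag} forces a clean dichotomy: since $M^F$ and $M_F$ have ranks $\rk(F)$ and $3-\rk(F)$, the product $x_F\gamma_b$ has nonzero degree only when $\rk(F)=1$ and $b>|F|$, or when $\rk(F)=2$ and $b<|F|$. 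Together with Corollary~\ref{c:annihilateflat} (which kills the term $|F|=b$), this means that in $\deg_M(\gamma_{n_2}^2)$ and $\deg_M(\gamma_{n_1}\gamma_{n_2})$ only the points contribute, while in $\deg_M(\gamma_{n_1}^2)$ only the lines contribute.

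First I would record a rank $2$ input. If $M'$ is a loopless rank $2$ matroid on a ground set of size $m$ whose rank $1$ flats all have size $p$, then expanding $\gamma_p$ via Lemma~\ref{l:weights} with a set $T'$ of size $m-p$ gives $\OI_{E'}(G,T')=|G\cap T'|$ for every point $G$, since $|G|+|T'|-m=0$. As every element lies in a unique rank $1$ flat and $\deg_{M'}(x_G)=1$, summation yields $\deg_{M'}(\gamma_p)=\sum_G|G\cap T'|=|T'|=m-p$. Applying Lemma~\ref{l:productwithflag} I then obtain the two local contributions I need: for a line $F$, $\deg_M(x_F\gamma_{n_1})=\deg_{M^F}(\gamma_{n_1})=n_2-n_1$ (here $M^F$ has $m=n_2$, $p=n_1$), and for a point $F$, $\deg_M(x_F\gamma_{n_2})=\deg_{M_F}(\gamma_{n_2-n_1})=(n+1)-n_2$ (here $M_F$ has $m=n+1-n_1$, $p=n_2-n_1$). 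Both evaluations land exactly at the boundary $j=p$, which is precisely why the single rank $2$ input suffices.

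Next I would assemble the three degrees, expanding whichever factor makes the $\OI$-coefficient simplest and inserting the standard perfect-matroid-design counts: the points partition $E$; the lines through a fixed point partition its complement into blocks of size $n_2-n_1$, so each point lies on $(n+1-n_1)/(n_2-n_1)$ lines; and the total number of lines is $N_2=(n+1)(n+1-n_1)/\big(n_2(n_2-n_1)\big)$. For $\deg_M(\gamma_{n_2}^2)$ I expand one $\gamma_{n_2}$ with $|T|=n+1-n_2$; only points survive, $\OI_E(F,T)=|F\cap T|$, and $\sum_F|F\cap T|=|T|$, giving $(n+1-n_2)^2$. For $\deg_M(\gamma_{n_1}\gamma_{n_2})$ I expand $\gamma_{n_1}$ with $|T'|=n+1-n_1$; again only points survive with $\OI_E(F,T')=|F\cap T'|$ summing to $|T'|=n+1-n_1$, giving $(n+1-n_1)(n+1-n_2)$. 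For $\deg_M(\gamma_{n_1}^2)$ I expand one $\gamma_{n_1}$; now only lines survive, and here $\OI_E(F,T)=|F\cap T|-(n_2-n_1)$, so after summing $|F\cap T|$ over the lines through the elements of $T$ and subtracting $N_2(n_2-n_1)$ I reach $(n+1-n_1)^2-N_2(n_2-n_1)^2$, which collapses to $\tfrac{(n+1-n_1)(n+1-n_2)n_1}{n_2}$.

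The only genuinely delicate point will be this last computation: unlike the other two it does not reduce to the partition of $E$ into points but requires the exact line count $N_2$, and the simplification hinges on the identity $n_2(n+1-n_1)-(n+1)(n_2-n_1)=n_1(n+1-n_2)$, which is what produces the factor $n_1/n_2$. Everything else is bookkeeping once the point/line dichotomy from Lemma~\ref{l:productwithflag} and the rank $2$ evaluation $\deg(\gamma_p)=m-p$ are in hand.
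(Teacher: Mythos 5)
Your proof is correct, and it reaches all three formulas by a genuinely different mechanism than the paper's. The shared ingredients are Lemma~\ref{l:weights} and Lemma~\ref{l:productwithflag}, and your point/line dichotomy, the rank-$2$ evaluation $\deg_{M'}(\gamma_p)=m-p$, and the incidence counts all check out (in particular your simplification $n_2(n+1-n_1)-(n+1)(n_2-n_1)=n_1(n+1-n_2)$ and your value of $N_2$ agree with the paper's formula for the number of lines). The divergence is in how the final numbers are extracted. You expand \emph{one} factor and then evaluate the surviving terms $\deg_M(x_F\gamma_b)$ explicitly, which forces you to carry enumerative data: the partition of $E$ into points, the count $(n+1-n_1)/(n_2-n_1)$ of lines through a point, and the total line count $N_2$, with the third identity \eqref{eq:n1n1} requiring the algebraic collapse you flag as delicate. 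The paper instead expands \emph{both} factors, writing each of $\gamma_{n_1}^2$, $\gamma_{n_1}\gamma_{n_2}$, $\gamma_{n_2}^2$, \emph{and} $\gamma_n^2$ as a sum of complete-flag monomials $x_Fx_G$ whose coefficient is, by the perfect-matroid-design hypothesis, the same for every flag; since $\deg_M(\gamma_n^2)=1$ by Proposition~\ref{p:charpoly}, each desired degree is just a ratio of coefficients, and the number of complete flags cancels without ever being computed. So the paper's normalization against $\gamma_n^2$ buys freedom from all incidence counting and from any simplification identity, while your argument is more elementary and self-contained, and your reusable rank-$2$ formula $\deg_{M'}(\gamma_p)=m-p$ is a byproduct the paper's proof never makes explicit.
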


\begin{proof}
    First,
    \begin{align*}
    \gamma_{n_1}^2&=\sum_G \mult_{E}(|G|,n_1) x_G\gamma_{n_1}\\
    &=\sum_{G\colon \rk(G)=2} \mult_{E}(|G|,n_1) x_G\gamma_{n_1}\\
    &=\sum_{\varnothing\subsetneq F\subsetneq G\subsetneq E}\mult_{G}(|F|,n_1) \mult_{E}(|G|,n_1)x_Fx_G
    \end{align*}
    where the second equality follows from Lemma~\ref{l:productwithflag}.
    Similarly,
    \begin{align*}
      \gamma_{n_1}\gamma_{n_2}&=\sum_{\varnothing\subsetneq F\subsetneq G\subsetneq E} \mult_{E}(|F|,n_1)\mult_{E\setminus F}(|G|-n_1,n_2-n_1)x_Fx_G,\\
      \gamma_{n_2}^2&=\sum_{\varnothing\subsetneq F\subsetneq G\subsetneq E} \mult_{E}(|F|,n_2)\mult_{E
      \setminus F}(|G|-n_1,n_2-n_1)x_Fx_G,\\
      \gamma_n^2&=\sum_{\varnothing\subsetneq F\subsetneq G\subsetneq E} \mult_{E}(|F|,n)\mult_{E\setminus F}(|G|-n_1,n-n_1)x_Fx_G.
    \end{align*}
    Because the class $x_Fx_G$ in $A^2(M)$ and its coefficient is independent of the flag of flats $F\subsetneq G$, each of these sums is equal to the product of multiplicities with the count of complete flags of flats.
    The result now follows by computing the ratios of the multiplicities in $\frac{\gamma_{n_1}^2}{\gamma_n^2}$, $\frac{\gamma_{n_1}\gamma_{n_2}}{\gamma_n^2}$, and $\frac{\gamma_{n_2}^2}{\gamma_n^2}$,
    and noting that $\deg(\gamma_n^2)=1$ (by Proposition~\ref{p:charpoly}). 
\end{proof}

\begin{lemma} \label{l:perfectrecurrence}
  Let $M$ be a perfect matroid design. Then in $A^*(M)$, we have the relation
  \[\gamma_{n_i}^2=\frac{n_i-n_{i-1}}{n_{i+1}-n_{i-1}}\gamma_{n_i}\gamma_{n_{i+1}}
  +\frac{n_{i+1}-n_i}{n_{i+1}-n_{i-1}}\gamma_{n_{i-1}}\gamma_{n_{i}}.
  \]
\end{lemma}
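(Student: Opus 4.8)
The plan is to reduce the claimed quadratic relation to the single linear annihilation statement of Corollary~\ref{c:annihilateflat}. Set $c_1 = \frac{n_i-n_{i-1}}{n_{i+1}-n_{i-1}}$ and $c_2 = \frac{n_{i+1}-n_i}{n_{i+1}-n_{i-1}}$, and introduce the degree-one class
\[\delta = \gamma_{n_i} - c_1\gamma_{n_{i+1}} - c_2\gamma_{n_{i-1}} \in A^1(M).\]
Since $\gamma_{n_i}\delta = \gamma_{n_i}^2 - c_1\gamma_{n_i}\gamma_{n_{i+1}} - c_2\gamma_{n_{i-1}}\gamma_{n_i}$, the asserted identity is exactly equivalent to $\gamma_{n_i}\delta = 0$. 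So I would first compute $\delta$ explicitly in terms of the generators $x_F$, and then check that multiplication by $\gamma_{n_i}$ kills it.

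To compute $\delta$, I would substitute the expression of Lemma~\ref{l:flatsizesum} for each of $\gamma_{n_{i-1}}$, $\gamma_{n_i}$, and $\gamma_{n_{i+1}}$, using that in a perfect matroid design every rank-$j$ flat has size $n_j$, so the condition $|F|\geq n_j+1$ in that lemma becomes $\rk F \geq j+1$. Two elementary scalar identities drive all the cancellation: $c_1+c_2=1$, and the weighted-average identity $c_1 n_{i+1}+c_2 n_{i-1}=n_i$ (which is precisely the statement that $c_1,c_2$ are the interpolation weights placing $n_i$ between $n_{i-1}$ and $n_{i+1}$). The first identity makes the coefficient of $\gamma_n$ in $\delta$ vanish; combined with the second, the coefficient of $x_F$ vanishes for every flat $F$ with $\rk F \neq i$. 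I would organize this by the rank $k=\rk F$, treating the ranges $k\leq i-1$, $k=i$, $k=i+1$, and $k\geq i+2$ separately, where in each range the three threshold conditions ``$k\geq i$'', ``$k\geq i+1$'', ``$k\geq i+2$'' combine with the two identities to cancel. What survives is only the rank-$i$ flats, giving
\[\delta = \frac{(n_{i+1}-n_i)(n_i-n_{i-1})}{n_{i+1}-n_{i-1}}\sum_{\rk F = i} x_F.\]

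With $\delta$ in hand the conclusion is immediate: $\gamma_{n_i}\delta$ is a scalar multiple of $\sum_{\rk F=i}\gamma_{n_i}x_F$, and each summand vanishes by Corollary~\ref{c:annihilateflat}, since a rank-$i$ flat of a perfect matroid design has size exactly $n_i$. Hence $\gamma_{n_i}\delta=0$, which is the desired relation.

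The part requiring the most care is the coefficient bookkeeping in the computation of $\delta$, together with the boundary cases $i=1$ and $i=r$. For these I would use the conventions $n_0=0$ and $n_{r+1}=n+1$, and I would apply Lemma~\ref{l:flatsizesum} uniformly to all three classes rather than dropping the vanishing boundary term prematurely. This matters: at $k=0$ the lemma reduces to the identity $(n+1)\gamma_n=\sum_F |F|x_F$ (so that $\gamma_0=0$), and its seemingly nonzero $\gamma_n$- and $x_F$-pieces are exactly what keep the $\gamma_n$-coefficient cancellation clean; at $k=n+1$ it simply gives $\gamma_{n+1}=0$. Checking that the same four-range cancellation goes through with these conventions is the one genuinely fiddly step, but it is routine once the two scalar identities above are in place.
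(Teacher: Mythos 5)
Your proof is correct, but it takes a genuinely different route from the paper. The paper argues via Poincar\'{e} duality: it suffices to check the degree-two identity after multiplying by $x_{\cF}$ for every flag $\cF$ of length $r-2$, and Lemma~\ref{l:productwithflag} then reduces each such check to a rank-$3$ perfect matroid design, where the explicit intersection numbers of Lemma~\ref{l:rank3uniform} (computed via the $\mult$-description of the $\gamma$'s) verify the relation. You instead prove a sharper \emph{linear} statement in $A^1(M)$: using Lemma~\ref{l:flatsizesum} and the fact that in a perfect matroid design $|F|$ is determined by $\rk F$, the class $\delta = \gamma_{n_i} - c_1\gamma_{n_{i+1}} - c_2\gamma_{n_{i-1}}$ collapses to the explicit multiple $\frac{(n_{i+1}-n_i)(n_i-n_{i-1})}{n_{i+1}-n_{i-1}}\sum_{\rk F = i} x_F$, after which the quadratic relation follows from a single application of Corollary~\ref{c:annihilateflat}. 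I checked your cancellation: with $c_1+c_2=1$ and $c_1n_{i+1}+c_2n_{i-1}=n_i$, the coefficient of $\gamma_n$ and of $x_F$ for $\rk F \in \{i+1\}\cup\{k : k\geq i+2\}$ all vanish, and your handling of the boundary cases $i=1,r$ via $n_0=0$, $n_{r+1}=n+1$ (where Lemma~\ref{l:flatsizesum} correctly yields $\gamma_0=\gamma_{n+1}=0$) is sound. Your approach buys elementarity and extra information: it avoids Poincar\'{e} duality and the rank-$3$ computations entirely, and the intermediate identity shows $\delta$ is killed by \emph{anything} annihilating the rank-$i$ flats, not just by $\gamma_{n_i}$. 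The paper's approach, on the other hand, follows the duality-plus-reduction template of \cite[Corollary~7.9]{BST} for the Boolean case and isolates all the arithmetic in the reusable rank-$3$ data of Lemma~\ref{l:rank3uniform}.
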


\begin{proof}
  Our proof follows the lines of the special case of $U_{n+1,n+1}$ in \cite[Corollary~7.9]{BST}.
  By Poincar\'{e} duality on $A^*(M)$, it suffices to verify that the identity is true after multiplying by any $x_{\cF}$ where $\cF$ is a flag of flats of length $r-2$. Write the flag $\cF$ as a subset of a full flag of flats
  \[\varnothing\subsetneq F_1\subsetneq F_2\subsetneq \dots\subsetneq F_r\subsetneq E\]
  with two flats removed. Both sides of the identity are $0$ unless the removed flats are of size $n_i,n_{i+1}$ or of size $n_{i-1},n_i$ by Lemma~\ref{l:productwithflag}. Consider first the case of $n_i,n_{i+1}$.
  Write $y$ for the difference of left and right side of the relation. Now, by Lemma~\ref{l:productwithflag},
  \[\deg_M(x_{\cF}y)=\deg_{M_{F_{i-1}}^{F_{i+2}}}(y),\]
  so we may work in $A^*(M_{F_{i-1}}^{F_{i+2}})$, which is itself a perfect matroid design.
  The images of $\gamma_{n_{i-1}},\gamma_{n_i},\gamma_{n_{i+1}}$ in $A^*(M_{F_{i-1}}^{F_{i+2}})$ are $0,\gamma_{n_i-n_{i-1}},\gamma_{n_{i+1}-n_{i-1}}$, respectively. We conclude $\deg_{M_{F_{i-1}}^{F_{i+2}}}(y)=0$ from Lemma~\ref{l:rank3uniform} which yields
  \[\deg(\gamma_{n_i-n_{i-1}}^2)=\frac{n_i-n_{i-1}}{n_{i+1}-n_{i-1}}\deg(\gamma_{n_i-n_{i-1}}\gamma_{n_{i+1}-n_{i-1}})\]
  from \eqref{eq:n1n1} and \eqref{eq:n1n2}.

  For the case of $n_{i-1},n_i$, we make use of
  \[\deg(\gamma_{n_i-n_{i-2}}^2)=\frac{n_{i+1}-n_i}{n_{i+1}-n_{i-1}}\deg(\gamma_{n_{i-1}-n_{i-2}}\gamma_{n_i-n_{i-2}}).
  \]
  from \eqref{eq:n1n2} and \eqref{eq:n2n2}
\end{proof}

We have the immediate relation among matroidal mixed Eulerian numbers:

\begin{corollary} \label{c:perfectrelation} Let $M$ be a perfect matroid design of rank $r+1$. Let $c=(c_1,\dots,c_n)\in\Z_{\geq 0}^r$ satisfy
$c_1+\dots+c_n=r$. If $c_{n_i}\geq 2$, then
  \[(n_{i+1}-n_{i-1})A_c(M)=(n_i-n_{i-1})A_{c-e_{n_i}+e_{n_{i+1}}}(M)+(n_{i+1}-n_i)A_{c-e_{n_i}+e_{n_{i-1}}}(M).\]
\end{corollary}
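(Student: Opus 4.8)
The plan is to deduce the corollary directly from the Chow-ring relation of Lemma~\ref{l:perfectrecurrence} by factoring out a single copy of $\gamma_{n_i}^2$ and then applying the degree map. Since $c_{n_i}\geq 2$, the monomial $\gamma_1^{c_1}\cdots\gamma_n^{c_n}$ can be written as $\gamma_{n_i}^2\,P$, where $P=\gamma_1^{c_1}\cdots\gamma_{n_i}^{c_{n_i}-2}\cdots\gamma_n^{c_n}$ collects the remaining factors. First I would substitute the relation
\[\gamma_{n_i}^2=\tfrac{n_i-n_{i-1}}{n_{i+1}-n_{i-1}}\gamma_{n_i}\gamma_{n_{i+1}}+\tfrac{n_{i+1}-n_i}{n_{i+1}-n_{i-1}}\gamma_{n_{i-1}}\gamma_{n_i}\]
into this factorization, obtaining an identity in $A^*(M)$ that expresses $\gamma_1^{c_1}\cdots\gamma_n^{c_n}$ as a rational combination of $\gamma_{n_i}\gamma_{n_{i+1}}P$ and $\gamma_{n_{i-1}}\gamma_{n_i}P$.

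Next I would apply $\deg_M$ to both sides and use its linearity. The only bookkeeping is to recognize that $\gamma_{n_i}\gamma_{n_{i+1}}P$ is precisely $\gamma_1^{c_1}\cdots\gamma_n^{c_n}$ with the exponent of $\gamma_{n_i}$ lowered by one and that of $\gamma_{n_{i+1}}$ raised by one, i.e.~the monomial indexed by $c-e_{n_i}+e_{n_{i+1}}$, so that $\deg_M(\gamma_{n_i}\gamma_{n_{i+1}}P)=A_{c-e_{n_i}+e_{n_{i+1}}}(M)$; likewise $\deg_M(\gamma_{n_{i-1}}\gamma_{n_i}P)=A_{c-e_{n_i}+e_{n_{i-1}}}(M)$. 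Clearing the common denominator $n_{i+1}-n_{i-1}$ then produces exactly the stated identity. I would also check the index arithmetic: moving one unit of exponent between coordinates preserves the constraint $\sum_k c_k=r$ and keeps all entries nonnegative (as $c_{n_i}\geq 2$), so the resulting numbers are admissible matroidal mixed Eulerian numbers.

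I would briefly record the boundary conventions so the formula reads correctly at the extremes. For $i=1$ one takes $n_0=0$ and uses $\gamma_0=0$, so the $e_{n_{i-1}}$ term drops out; for $i=r$ one has $n_{r+1}=n+1$ and $\gamma_{n+1}=0$, killing the $e_{n_{i+1}}$ term. In both cases this matches the vanishing of the degree of the corresponding monomial, so no separate argument is needed.

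There is essentially no obstacle here: all the content is carried by Lemma~\ref{l:perfectrecurrence}, and the corollary is the formal consequence of multiplying that relation by $P$ and taking degrees. The only point requiring minor care is keeping the subscript translation $c\mapsto c-e_{n_i}+e_{n_j}$ straight and confirming the admissibility of the shifted indices, which I have noted above.
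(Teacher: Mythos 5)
Your proposal is correct and is exactly the argument the paper intends: the corollary is stated as an immediate consequence of Lemma~\ref{l:perfectrecurrence}, obtained precisely by multiplying that Chow-ring relation by the remaining monomial $P$ and applying $\deg_M$ with its linearity. Your extra remarks on the boundary conventions ($\gamma_0=0$, $\gamma_{n+1}=0$) and on admissibility of the shifted exponent vectors are consistent with the paper's conventions and fill in the only bookkeeping the paper leaves implicit.
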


For perfect matroidal mixed Eulerian numbers, this relation lends itself to ``probabilistic process'' arguments for these numbers as in \cite{NT:remixed}. 

A particularly explicit case of perfect matroidal mixed Eulerian numbers are the lopsided ones. 

\begin{definition}
    A $r$-tuple of nonnegative integers $(c_1,\dots,c_r)$ is {\em lopsided} if for all $j$ with $1\leq j\leq r$, $\sum_{i=1}^j c_i\geq j$.
    The attached matroidal mixed Eulerian number is $A_{(c_1,\dots,c_r)_n}=\deg_M(\gamma_{n_1}^{c_1}\dots\gamma_{n_r}^{c_r})$.
\end{definition}

\begin{lemma} \label{l:lopsided}
   Let $M$ be a perfect matroid design and let 
   \[V_M=\left(\prod_{i=1}^r  N_i 
 \frac{n_{i+1}-n_i}{n_{i+1}}\right).\]
   If $(c_1,\dots,c_r)\in\Z_{\geq 0}^r$ is lopsided, then
 \[\deg_M(\gamma_{n_1}^{c_1}\dots\gamma_{n_r}^{c_r})=V_Mn_1^{c_1}\dots n_r^{c_r}.\]       
\end{lemma}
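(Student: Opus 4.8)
The plan is to combine the quadratic relation of Corollary~\ref{c:perfectrelation} with a direct evaluation of the single ``anchor'' value $A_{(1,\dots,1)_n}(M)=\deg_M(\gamma_{n_1}\cdots\gamma_{n_r})$, and then to argue that the relation together with this anchor pins down every lopsided number uniquely. Throughout I keep the running assumption $n_1=1$ (so that $n_1^{c_1}=1$), and I adopt the conventions $n_0=0$ and $\gamma_{n_0}=\gamma_0=0$. I abbreviate $A_c=A_{(c_1,\dots,c_r)_n}(M)$ and write $e_i$ for the $i$-th standard basis vector in the rank indexing, so that Corollary~\ref{c:perfectrelation} reads: for $c_i\geq 2$,
\[ A_c=\frac{n_i-n_{i-1}}{n_{i+1}-n_{i-1}}\,A_{c-e_i+e_{i+1}}+\frac{n_{i+1}-n_i}{n_{i+1}-n_{i-1}}\,A_{c-e_i+e_{i-1}}. \]

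First I would establish the anchor $A_{(1,\dots,1)_n}(M)=V_M\,n_1\cdots n_r$ by induction on the rank, peeling off the top class $\gamma_{n_r}$. Writing $\gamma_{n_r}=\sum_S\mult_E(|S|,n_r)x_S$ and expanding the degree, the remaining factor $\gamma_{n_1}\cdots\gamma_{n_{r-1}}$ is flatly contiguous, so by Lemma~\ref{l:contiguousvanishing} only flats $S$ of rank $1$ or $r$ survive; the rank-$1$ terms vanish because contracting produces a $\gamma_{n_1-1}=\gamma_0=0$ factor (this is exactly where $n_1=1$ is used), leaving only the $N_r$ coatoms. Each coatom contributes $\mult_E(n_r,n_r)=n_r\frac{n_{r+1}-n_r}{n_{r+1}}$ times $\deg_{M^S}(\gamma_{n_1}\cdots\gamma_{n_{r-1}})$, where $M^S$ is a perfect matroid design of one smaller rank with parameters $n_1,\dots,n_{r-1}$ and ground size $n_r$. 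Since $V_M=V_{M^S}\cdot N_r\frac{n_{r+1}-n_r}{n_{r+1}}$, the inductive hypothesis closes the computation.

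Next I would verify that the target function $G(c)\coloneqq V_M\,n_1^{c_1}\cdots n_r^{c_r}$ and the true invariant $A_c$ satisfy the same relation and the same boundary behavior. By Corollary~\ref{c:perfectrelation} the invariant obeys the displayed recursion, and $G$ obeys it identically because $n_i=\frac{n_i-n_{i-1}}{n_{i+1}-n_{i-1}}n_{i+1}+\frac{n_{i+1}-n_i}{n_{i+1}-n_{i-1}}n_{i-1}$. Two bookkeeping facts keep this a closed system on lopsided vectors. Both children $c-e_i+e_{i+1}$ and $c-e_i+e_{i-1}$ remain lopsided, the crucial point being that a lopsided $c$ with $c_i\geq 2$ in fact satisfies $\sum_{k\leq i}c_k\geq i+1$, which protects the rightward move; and lopsidedness forces $c_r\leq 1$, so the relation is never applied at $i=r$ and no token is ever pushed off the right. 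Hence the only boundary contributions are the leftward fall-off at $i=1$, which carries a $\gamma_{n_0}=\gamma_0=0$ and is matched by $G$ through $n_0=0$, and the all-ones vector, the unique lopsided vector with no coordinate $\geq 2$, where $A$ and $G$ agree by the anchor.

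Finally I would conclude $A_c=G(c)$ for all lopsided $c$ by a uniqueness argument. The coefficients $\frac{n_i-n_{i-1}}{n_{i+1}-n_{i-1}}$ and $\frac{n_{i+1}-n_i}{n_{i+1}-n_{i-1}}$ are nonnegative and sum to $1$, so after fixing (say) the leftmost coordinate $\geq 2$ at each step they define an absorbing Markov chain on the finite set of lopsided vectors, with absorbing states the all-ones vector and the leftward fall-off. From any lopsided vector one reaches an absorbing state with positive probability---for instance by repeatedly pushing the rightmost excess token to the right until the tokens spread to one per slot, or until a left move drops one off---so absorption is almost sure, the associated substochastic matrix $P$ has spectral radius less than $1$, and the linear system $x=Px+b$ has a unique solution. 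As both $A$ and $G$ solve it with identical boundary data, they coincide. I expect the genuine obstacle to be precisely this last step: the two-term relation is not a well-founded recursion---already $A_{(1,2,0)_n}$ and $A_{(2,1,0)_n}$ are mutually dependent---so no monovariant reduces directly to the anchor, and one must instead exploit finiteness together with reachability of the absorbing set to guarantee a unique solution.
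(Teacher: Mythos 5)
Your proof is correct, but it follows a genuinely different route from the paper's. The paper never invokes the quadratic relation: it proves the lemma by a single induction on rank that evaluates \emph{every} lopsided number directly, peeling off $\gamma_{n_\ell}$ for $\ell=\max\{i: c_i>0\}$ via the $\mult$-expansion of Lemma~\ref{l:weights} and killing all flats except the coatoms --- flats whose size lies in $\Supp(c-e_\ell)$ die by Corollary~\ref{c:annihilateflat}, flats of smaller rank die by a dimension count (this is exactly where lopsidedness enters), and the remaining intermediate flats die because $\deg_{M_F}(1)$ vanishes unless $\rk(F)=r$ --- so that each peel contributes the factor $N_r\frac{n_{r+1}-n_r}{n_{r+1}}\,n_\ell$ and the product telescopes to $V_M n_1^{c_1}\cdots n_r^{c_r}$. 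You run this peeling argument only for the anchor $c=(1,\dots,1)$ (your anchor computation is the paper's inductive step specialized to $\ell=r$, and it is correct), and then propagate to all lopsided vectors using Corollary~\ref{c:perfectrelation} together with an absorbing-Markov-chain uniqueness argument. Your bookkeeping is sound: right moves preserve lopsidedness because $\sum_{k\le i}c_k\ge i+1$ whenever $c$ is lopsided with $c_i\ge 2$; lopsidedness forces $c_r\le 1$, so nothing is pushed off the right; the left fall-off carries the value $0$ consistently with the convention $\gamma_{n_0}=\gamma_0=0$, which is how Lemma~\ref{l:perfectrecurrence} reads at $i=1$ in agreement with Lemma~\ref{l:rank3uniform}; and there is no circularity, since Lemma~\ref{l:perfectrecurrence} is proved independently of the present lemma. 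You also correctly identify the real subtlety: the two-term relation is not a well-founded recursion, so one needs uniqueness of the solution rather than an unwinding argument. One cosmetic inconsistency: having fixed the chain to update the \emph{leftmost} coordinate $\ge 2$, your reachability path pushes the \emph{rightmost} excess token, which is not a move of that chain; this is harmless, since reachability is immediate anyway --- for instance, always choosing the rightward move strictly increases $\sum_i i\,c_i$ on the finite set of lopsided vectors, so such a path must terminate, and it can only terminate at $(1,\dots,1)$. As for what each approach buys: the paper's argument is shorter and self-contained, while yours needs more machinery but makes explicit a point the paper only alludes to after Corollary~\ref{c:perfectrelation}, namely that the recursion together with the single value $A_{(1,\dots,1)_n}(M)$ pins down all lopsided numbers, in the spirit of Nadeau--Tewari's probabilistic processes.
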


\begin{proof}
    Let $\ell$ be the largest index for which $c_\ell$ is positive, and 
    write $(d_1,\dots,d_r)=(c_1,\dots,c_r)-e_\ell$.
    We begin by proving
    \[\deg_M(\gamma_{n_1}^{c_1}\dots\gamma_{n_r}^{c_r})=\sum_{F\colon \rk(F)=r} \mult_{E}(n_r,n_\ell)\deg_{M^F}(\gamma_{n_1}^{d_1}\dots\gamma_{n_\ell}^{d_{\ell}}).\]
    Write 
    \[\deg_M(\gamma_{n_1}^{c_1}\dots\gamma_{n_\ell}^{c_{\ell}})
    =\sum_F \mult_{E}(|F|,n_{\ell})\deg(x_F\gamma_{n_1}^{d_1}\dots \gamma_{n_{\ell}}^{d_{\ell}}).
    \]
    By Corollary~\ref{c:annihilateflat}, only summands for which $|F|\notin\Supp(d)$ contribute a nonzero term. We claim that moreover, summands for which $|F|<n_\ell$ do not contribute. 
    Let $F\notin\Supp(d)$ be a flat of rank $j$ for $j<\ell$.
    Indeed,
    \[D\coloneqq \sum_{i=1}^{j-1} d_i =\sum_{i=1}^j d_i\geq j>\rk(M^F)-1.
    \]
    But by Lemma~\ref{l:productwithflag}, the summand corresponding to $F$ is a multiple of $\deg_{M^F}(\gamma_{n_1}^{d_1}\dots\gamma_{\rk(F)-1}^{d_{\rk(F)-1}})$. This must vanish since $A^D(M^F)=0$ by dimension considerations.
    Now the only summands that contribute must have $|F|\geq n_\ell$. Again by Lemma~\ref{l:productwithflag}, the summand corresponding to $F$ is a multiple of $\deg_{M_F}(1)$ which vanishes for degree conditions unless $\rk(F)=r$.
    Hence, the sum is equal to
    \[\sum_{F\colon \rk(F)=r} \mult_{E}(|F|,n_{\ell})\deg(x_F\gamma_{n_1}^{d_1}\dots \gamma_{n_{\ell}}^{d_{\ell}})
    =N_r\frac{(n+1-n_r)}{n+1}n_{\ell}\deg_{M^F}(\gamma_{n_1}^{d_1}\dots \gamma_{n_{\ell}}^{d_{\ell}}).
    \]
    The result follows by induction because the perfect matroidal mixed Eulerian number on the right is lopsided.
\end{proof}

It would be worthwhile to compute the degree of a product of $\lambda_{n_i}$'s by equivariant localization and compare the product with Nadeau--Tewari's $q$-divided symmetrization \cite{NT:remixed}.

\subsection{Remixed Eulerian Numbers}

In \cite{NT:Klyachko,NT:remixed}, Nadeau and Tewari introduced a $q$-deformation of mixed Eulerian numbers that they call remixed Eulerian numbers. In this section, we identify them (up to a power of $q$) with matroidal mixed Eulerian numbers of a projective geometry.

The projective geometry over the finite field $\F_q$ is the $q$-analogue of the Boolean matroid. 
Recall that 
\[(n)_q=\frac{q^n-1}{q-1}=1+q+\dots+q^{n-1},\ (n)_q!=\prod_{i=1}^n (i)_q.\]
The projective geometry $\PG(r,q)$ is the matroid on the ground set $\PP^r(\F_q)$ where the rank $\rk(S)$  of a subset $S$ is $\dim(\Span(S))+1$, where $\dim(\Span(S))$ is the dimension of its projective span. Consequently, the rank $k$ flats of $\PG(r,q)$ are the $(k-1)$-dimensional subspaces, and each has $n_k\coloneqq (k)_q$ elements. Hence, $\PG(r,q)$ is a perfect matroid design of rank $r+1$.

\begin{definition}
  Let $r$ be a positive integer, and let $\cW_r=\{(c_1,\dots,c_r)\mid c_1+\dots+c_r=r\}$. The remixed Eulerian polynomials $A_c(q)\in\C[q]$ for $c\in \cW_r$ are a collection of polynomials characterized by
  \begin{enumerate}
      \item $A_{1,\dots,1}(q)=(r)_q!$ and
      \item if $c_i\geq 2$, then $(q+1)A_c(q)=qA_{c-e_i+e_{i-1}}(q)+A_{c-e_i+e_{i+1}}(q).$
  \end{enumerate}
\end{definition}

\begin{theorem} For a prime power $q$ and $c\in\cW_r$, we have the identity between matroidal mixed Eulerian numbers and remixed Eulerian numbers: 
\[A_{(c)_n}(\PG(q,r))=q^{\binom{r+1}{2}}A_c(q).\]
\end{theorem}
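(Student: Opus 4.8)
The statement asserts that the matroidal mixed Eulerian numbers $A_{(c)_n}(\PG(r,q))$ agree with the remixed Eulerian numbers of Nadeau--Tewari up to the factor $q^{\binom{r+1}{2}}$. Since both families of numbers are characterized by a base case together with a single linear recurrence, my approach is to verify that the rescaled quantity $q^{-\binom{r+1}{2}}A_{(c)_n}(\PG(r,q))$ satisfies exactly the two defining conditions of $A_c(q)$. That is, I would show (i) the base case $q^{-\binom{r+1}{2}}A_{(1,\dots,1)_n}(\PG(r,q)) = (r)_q!$, and (ii) that the rescaled matroidal numbers satisfy the recurrence $(q+1)A_c(q)=qA_{c-e_i+e_{i-1}}(q)+A_{c-e_i+e_{i+1}}(q)$. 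By uniqueness of the solution to this characterization, the two families must then coincide.

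\textbf{Carrying out the recurrence.} The recurrence is the more structural of the two conditions, and it should follow directly from Corollary~\ref{c:perfectrelation}, which for a perfect matroid design gives, whenever $c_{n_i}\geq 2$,
\[
(n_{i+1}-n_{i-1})A_c(M)=(n_i-n_{i-1})A_{c-e_{n_i}+e_{n_{i+1}}}(M)+(n_{i+1}-n_i)A_{c-e_{n_i}+e_{n_{i-1}}}(M).
\]
For $M=\PG(r,q)$ we have $n_j=(j)_q=(q^j-1)/(q-1)$, so the coefficients become $n_{i+1}-n_{i-1}=q^{i-1}(q+1)$, $n_i-n_{i-1}=q^{i-1}$, and $n_{i+1}-n_i=q^i$. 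Substituting and dividing the relation through by $q^{i-1}$ yields precisely $(q+1)A_c = q\,A_{c-e_{n_i}+e_{n_{i-1}}} + A_{c-e_{n_i}+e_{n_{i+1}}}$, matching the Nadeau--Tewari recurrence once indices are matched (their $e_i$ corresponds to the slot $e_{n_i}$). The one point requiring care is that the rescaling factor $q^{\binom{r+1}{2}}$ is the same on both sides of the recurrence: since $\binom{r+1}{2}$ depends only on $r$ and not on $c$, and all three terms $c$, $c-e_{n_i}+e_{n_{i+1}}$, and $c-e_{n_i}+e_{n_{i-1}}$ lie in $\cW_r$ for the same $r$, the common factor cancels cleanly and the rescaled numbers inherit exactly the same recurrence.

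\textbf{The base case.} It remains to compute $A_{(1,\dots,1)_n}(\PG(r,q))=\deg_M(\gamma_{n_1}\cdots\gamma_{n_r})$ and check it equals $q^{\binom{r+1}{2}}(r)_q!$. This is a lopsided tuple, so Lemma~\ref{l:lopsided} applies directly and gives
\[
\deg_M(\gamma_{n_1}\cdots\gamma_{n_r})=V_M\, n_1 n_2\cdots n_r,\qquad V_M=\prod_{i=1}^r N_i\,\frac{n_{i+1}-n_i}{n_{i+1}}.
\]
The computation then reduces to evaluating $V_M\prod_i n_i$ explicitly for the projective geometry, using $N_i=\prod_{j=0}^{i-1}\frac{n_{i+1}-n_j}{n_i-n_j}$ with $n_j=(j)_q$ and $n_0=0$. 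I expect $\prod_i n_i=\prod_i (i)_q$ to account for the $(r)_q!$ part, while the product of the $q$-power discrepancies $\frac{n_{i+1}-n_i}{n_{i+1}}=\frac{q^i}{(i+1)_q}$ together with the $N_i$ factors (each a $q$-binomial-type ratio) should telescope down to a single power of $q$; tallying these exponents is where the $\binom{r+1}{2}=\sum_{i=1}^r i$ arises. \textbf{The main obstacle} is precisely this final bookkeeping: one must verify that all the $q$-factorial denominators and numerators coming from $V_M$ cancel against $\prod_i(i)_q$ to leave exactly $(r)_q!$ times $q^{\binom{r+1}{2}}$, with no residual $q$-binomial factors. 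This is a routine but delicate $q$-analogue manipulation, and the whole theorem ultimately rests on getting this exponent count exactly right.
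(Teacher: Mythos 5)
Your proposal is correct and follows essentially the same route as the paper: verify the Nadeau--Tewari characterization by computing the base case $A_{(1,\dots,1)_n}(\PG(r,q))=q^{\binom{r+1}{2}}(r)_q!$ via Lemma~\ref{l:lopsided} and by specializing Corollary~\ref{c:perfectrelation} at $n_j=(j)_q$ to recover the remixed Eulerian recurrence. The ``delicate bookkeeping'' you worry about in the base case is actually immediate: since $N_i=(i+1)_q$ (which the paper cites as well-known), each factor of $V_M$ collapses to $N_i\cdot\frac{q^i}{(i+1)_q}=q^i$, so $V_M=q^{\binom{r+1}{2}}$ exactly and $\prod_i n_i=(r)_q!$ with no residual factors.
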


\begin{proof}
  It is well-known that $N_i=(i+1)_q$. Consequently, by Lemma~\ref{l:lopsided},
  \[A_{(1,\dots,1)_q}(\PG(r,q))=q^{\binom{r+1}{2}}(r)_q!\]

  The relation in Corollary~\ref{c:perfectrelation} becomes the relation satisfied by the remixed Eulerian numbers.
\end{proof}

\begin{rem}
In \cite{NT:Klyachko}, Nadeau--Tewari study a $q$-deformed Klyachko algebra $\cK_{r+1}$ generated by indeterminants $u_1,\dots,u_r$ subject to the relations
\[(q+1)u_i^2=u_iu_{i+1}+qu_{i-1}u_{i}\]
where we take $u_0=u_{r+1}=0$. Our arguments show that there is a natural homomorphism $\cK_{r+1}\to A^r(\PG(r,q))$ given by $u_i\mapsto \gamma_{(i)_q}$. The algebra is equipped with a degree map defined by the $q$-divided symmetrization operation which, therefore, coincides (up to a power of $q$) with the degree map on $A^r(\PG(r,q))$.  Nadeau and Tewari \cite[Section~7]{NT:Klyachko} give a geometric description of the $q$-Klyachko algebra in terms of a Deligne--Lusztig variety. It would be interesting to find a geometric interpretation of the above homomorphism by relating the Deligne--Lusztig variety to an iterated blow-up of projective space $\PP^r$ over $\F_q$.
\end{rem}

\bibliographystyle{plain}
\bibliography{references}

\end{document}